\definecolor{linkc}{rgb}{0,0.2,0.6}
\newcommand{\D}{\mathbb{D}}
\renewcommand{\L}{\mathbb{L}}
\begin{document}

\title[Spaces of curves with constrained curvature on hyperbolic surfaces]
{Spaces of curves with constrained curvature \\
on hyperbolic surfaces} 
\author{Nicolau C.~Saldanha} 
\author{Pedro Z\"{u}hlke}
\subjclass[2010]{Primary: 58D10. Secondary: 53C42.} 
\keywords{Curve; curvature; h-principle; hyperbolic; 
topology of infinite-dimensional manifolds} 
\maketitle

\begin{abstract} 
	Let $ S $ be a hyperbolic surface. We investigate the topology of the space
	of all curves on $ S $ which start and end at given points in given
	directions, and whose curvatures are constrained to lie in a given interval
	$ (\ka_1,\ka_2) $. Such a space falls into one of four qualitatively
	distinct classes, according to whether $ (\ka_1,\ka_2) $ contains, overlaps,
	is disjoint from, or contained in the interval $ [-1,1] $. Its homotopy type
	is computed in the latter two cases.  We also study the behavior of these
	spaces under covering maps when $ S $ is arbitrary (not necessarily
	hyperbolic nor orientable) and show that if $ S $ is compact, then they are
	always nonempty.
\end{abstract}


\setcounter{section}{-1}
\section{Introduction}\label{S:intro}

A curve on a surface $ S $ is said to be \tdfn{regular} if it has a continuous
and nonvanishing derivative.  Consider two such curves whose curvatures take
values in some given interval, starting in the same direction (prescribed by a
unit vector tangent to $ S $) and ending in another prescribed direction. It is
a natural problem to determine whether one curve can be deformed into the other
while keeping end-directions fixed and respecting the curvature bounds.  From
another viewpoint, one is asking for a characterization of the connected
components of the space of all such curves.  More ambitiously, what is its
homotopy or homeomorphism type? The answer can be unexpectedly interesting, and
it is closely linked to the geometry of $ S $. See the discussion below on some
related results.

In this article this question is investigated when $ S $ is hyperbolic, that is,
a (possibly nonorientable) smooth surface endowed with a complete Riemannian
metric of constant negative curvature, say, $ -1$. It is well known that
any such surface can be expressed as the quotient of the hyperbolic plane $
\Hh^2 $ by a discrete group of isometries. Thus any curve on $ S $ can be lifted
to a curve in $ \Hh^2 $ having the same curvature (at least in absolute value,
if $ S $ is nonorientable). As will be more carefully explained later, this
implies that one can obtain a solution of the problem about spaces of curves on
$ S $ if one knows how to solve the corresponding problem in the hyperbolic
plane for all pairs of directions.  

Let $ u,\,v \in  UT\Hh^2 $ (the unit tangent bundle of $ \Hh^2 $) be two such
directions.  Let $ \sr C_{\ka_1 }^{\ka_2 }(u,v) $ be the set of all smooth
regular curves on $ \Hh^2 $ whose initial (resp.~terminal) unit tangent vectors
equal $ u $ (resp.~$ v $) and whose curvatures take values inside $
(\ka_1,\ka_2) $, furnished with the $ C^\infty $-topology. There are canonical
candidates for the position of connected component of this space, defined as
follows. Let
\begin{equation*}
	\pr\colon \wt{UT\Hh^2} \to UT\Hh^2 
\end{equation*}
denote the universal cover. Fixing a lift $ \te{u} $ of $ u $, one obtains a map
$ \sr C_{\ka_1}^{\ka_2}(u,v) \to \pr^{-1}(v) $ by looking at the endpoint of the
lift to the universal cover, starting at $ \te{u} $, of curves in $ \sr
C_{\ka_1 }^{\ka_2 }(u,v) $. Since $ \pr^{-1}(v) $ is discrete, this yields a
decomposition of $ \sr C_{\ka_1 }^{\ka_2 }(u,v)  $ into closed-open subspaces.

More concretely, let $ \ga \colon [0,1] \to \C $ be a regular plane curve. An
\tdfn{argument} $ \theta\colon [0,1] \to \R $ for $ \ga $ is a continuous
function such that $ \dot\ga $ always points in the direction of $ e^{i\theta}
$; note that there are countably many such functions, which differ by multiples
of $ 2\pi $. The \tdfn{total turning} of $ \ga $ is defined as $ \theta(1) -
\theta(0)$.  Because $ \Hh^2 $ is diffeomorphic to (an open subset of) $ \C $,
any regular curve in the former also admits arguments and a total turning.
However, these have no geometric meaning since they depend on the choice of
diffeomorphism (and subset). In any case, once such a choice has been made, it
gives rise to the decomposition \begin{equation}\label{E:decomp} \sr
	C_{\ka_1}^{\ka_2}(u,v) = \Du_{\tau} \sr C_{\ka_1}^{\ka_2}(u,v;\tau),
\end{equation} where $ \sr C_{\ka_1}^{\ka_2}(u,v; \tau) $ consists of those
curves in $ \sr C_{\ka_1 }^{\ka_2 }(u,v) $ which have total turning $ \tau $,
and $ \tau $ runs over all \tdfn{valid} total turnings, viz., those for which $
v $ is parallel to $ e^{i\tau}u $ (regarded as vectors in $ \C $).  The
closed-open subspaces appearing on the right side of \eqref{E:decomp}, which
will be referred to as the \tdfn{canonical subspaces} of $ \sr
C_{\ka_1}^{\ka_2}(u,v) $, are independent of the diffeomorphism: they are in
bijective correspondence with the fiber $ \pr^{-1}(v) $. If $ \ka_1=-\infty $
and $ \ka_2=+\infty $, so that no restriction is imposed on the curvature, then
they are in fact precisely the components of $ \sr C_{\ka_1 }^{\ka_2 }(u,v) $,
and each of them is contractible.  However, for general curvature bounds they
may fail to be connected, contractible, or even nonempty. The possibilities
depend above all upon the relation of $ (\ka_1,\ka_2) $ to the interval $ [-1,1]
$.

The main results of the paper are \tref{T:disjoint}, \tref{T:contained} and
\tref{T:compact}. The former two together assert the following.
\begin{uthm}\label{T:main}
	Let $ u,\,v \in UT\Hh^2 $ and $ \ka_1 < \ka_2 $.
	\begin{enumerate}
		\item [(a)] If $ (\ka_1,\ka_2) \subs [-1,1] $, then at
			most one of the canonical subspaces of $ \sr C_{\ka_1}^{\ka_2}(u,v)
			$ is nonempty. This subspace is always contractible. Thus $ \sr
			C_{\ka_1 }^{\ka_2 }(u,v) $ itself is either empty or contractible.
		\item [(b)]  If $ (\ka_1,\ka_2) $ is disjoint from $ [-1,1] $, then
			infinitely many of the canonical subspaces are empty, and infinitely
			many are nonempty.  The latter are all contractible.
	\end{enumerate}
\end{uthm}

If $ (\ka_1,\ka_2) $ contains $ [-1,1] $, then none of the canonical subspaces
is empty. We conjecture that most are contractible, but finitely
many of them may have the homotopy type of an $ n $-sphere, for some $ n \in \N
$ depending upon the subspace. This conjecture is partly motivated by our
knowledge of the homotopy type of the corresponding spaces of curves in the
Euclidean plane, as determined in \cite{SalZueh2} and \cite{SalZueh1}. 

Finally, if $ (\ka_1,\ka_2) $ overlaps $ [-1,1] $, then infinitely many of
the canonical subspaces are empty, and infinitely many are nonempty.
Nevertheless, as in the previous case, simple examples (not discussed in this
article) show that these subspaces may not be connected.  We hope to determine
the homotopy type in these two last cases in a future paper. 

To state the third main result \tref{T:compact} referred to above, let $ \sr
CS_{\ka_1}^{\ka_2}(u,v) $ denote the space of curves on $ S $ starting
(resp.~ending) in the direction of $ u $ (resp.~$ v $) $ \in UTS $ whose curvatures
take values in $ (\ka_1,\ka_2) $.

\begin{uthm}
	Let $S$ be any compact connected surface (not necessarily hyperbolic nor
	orientable). Then $ \sr CS_{\kappa_1}^{\kappa_2}(u,v)\neq \emptyset $ for
	any choice of $\ka_1<\ka_2$ and $u,\,v\in UTS$.
\end{uthm}

\subsection*{Related results}A map $ f \colon M^m \to N^n $ is said to
be $ k $-th order \tit{free} if the $ k $-th order osculating space of $ f $ at
any $ p \in M $, which is generated by all (covariant) partial derivatives of $
f $ at $ p $ of order $ \leq k $, has the maximum dimension, $ d(k,m):= {m+k
\choose k} - 1 $, for all $ p \in M $. Notice that a map is first-order free if
and only if it is an immersion.

Although this definition of ``freeness'' uses covariant derivatives for
simplicity, it is easy to avoid Riemannian metrics by using the language
of jets. On the other hand, this formulation suggests the study of spaces of
maps $ f\colon M \to N $ satisfying more general differential inequalities, or
partial differential relations. Various versions of Gromov's h-principle (see
\cite{Gromov} and \cite{EliMis}) are known which permit one to reduce the
questions of existence, density and approximation of holonomic (i.e., true)
solutions to a partial differential relation to the corresponding questions
about formal (i.e., virtual) solutions; the latter are usually settled by
invoking simple facts from homotopy theory.\footnote{The discussion here is
not meant to present an exhaustive compilation of the related literature. We
apologize to any authors whose work has not been cited.}

It is known that if $ n \geq d(k,m) + 1 $, or if $ n = d(k,m) $
but $ M $ is an open manifold, then the h-principle holds for $ k $-th order
free maps $ M^m \to N^n $ (see \cite{Gromov}, p.~9). In contrast, if $ n $ equals the
critical dimension $ d(k,m) $ but $ M $ is not open, then practically nothing is
known regarding such maps.  As an example, it is an open problem to decide whether
a second-order free map $ \Ss^1 \times \Ss^1 \to \R^5 $ exists,
cf.~\cite{EliMis}. Indeed, it can be quite hard to disprove the validity of the
h-principle even for the simplest differential relations. 

In \cite{SalZueh, SalZueh2, SalZueh1} and the current article we study spaces 
of curves whose curvature is constrained to a given interval $ (\ka_1,\ka_2) $
on an elliptic, flat or hyperbolic surface. Such curves are holonomic solutions 
of a second-order differential relation on maps from a one-dimensional
manifold (an interval, or a circle) to a two-dimensional manifold (the surface),
so that we are in the critical dimension.  In all of these cases we obtain some
results on the homeomorphism type of the space of such curves, and in particular
show that they do not abide to the h-principle. Other articles on the same
topic, for curves in the Euclidean plane, include \cite{Ayala, AyaRub, Dubins1}.

If $ (\ka_1,\ka_2) = (-\infty,+\infty) $, then no condition is imposed on the
curve except that it should be an immersion (i.e., regular). This problem was
solved by H.~Whitney \cite{Whitney} for closed curves in the plane and by
S.~Smale for closed curves on any manifold \cite{Smale}. The case of immersions
of higher-dimensional manifolds has also been elucidated, among others by S.~Smale,
R.~Lashof and M.~Hirsch in \cite{Hirsch, Hirsch1, Smale3, Smale1, Smale2}.
For results concerning the topology of spaces of immersions into space forms
of nonpositive curvature having constrained principal curvatures (second
fundamental form), see \cite{Zuehlke2}.

If $ (\ka_1,\ka_2) = (0,+\infty) $, then we are asking that the curvature of the
curve never vanish. This is equivalent to the requirement that it be
second-order free. Such curves are also called \tit{nondegenerate} or
\tit{locally convex} in the literature. Works on this problem include
\cite{KheSha, KheSha1, Little, Saldanha3}; see also \cite{Arnold}. More
generally, $ n $-th order free curves on $ n $-dimensional manifolds have been
studied by several authors over the years; we mention \cite{AlvSal, Anisov,
Feldman, Feldman1, Fenchel, Goulart, Little1, MosSad, SalSha, ShaSha, Wintgen}.

Finally, in another direction, there is a very extensive literature on
applications of paths of constrained curvature in engineering and control
theory. Perhaps the main problem in this regard is the determination of
length-minimizing or other sort of optimal paths within this and related
classes. We refer the reader to \cite{Ayala2, Dubins, Mittenhuber, Mittenhuber1,
Monroy, ReeShe} for further information and references.


\subsection*{Outline of the sections}

After briefly introducing some notation and definitions, \S \ref{S:basic}
begins with a discussion of curves of constant curvature $ \ka $ in the
hyperbolic plane: circles ($ \abs{\ka} > 1 $), horocycles ($ \abs{\ka} = 1 $)
and hypercircles ($ \abs{\ka} < 1 $). Then a transformation is defined which
takes a curve and shifts it by a fixed distance along the direction prescribed
by its normal unit vectors. Its effect on the regularity and 
curvature of the original curve is investigated, and applied to reduce the
dependence of the topology of $ \sr C_{\ka_1 }^{\ka_2 }(u,v) $ to four real
parameters, instead of the eight needed to specify $ \ka_1,\,\ka_2,\,u $ and $ v
$.

In \S\ref{S:voidness} the voidness of the canonical subspaces is discussed. It
is proven that if $ (\ka_1,\ka_2) $ is contained in $ [-1,1] $, then the image
of any curve in $ \sr C_{\ka_1 }^{\ka_2 }(u,v) $ is the graph of a function when
seen in an appropriate conformal model of the hyperbolic plane, which we call
the Mercator model.\footnote{It is very likely that the
	Mercator model has already appeared under other (possibly standard) names
in the literature;  however, we do not know any references. Hypercircles are also
known as ``hypercycles'' or ``equidistant curves''.} In particular, at
most one of its canonical subspaces is nonempty. If $ (\ka_1,\ka_2) $ contains $
[-1,1] $, then none of the  canonical subspaces is empty. In the two remaining
cases, there is a critical value $ \tau_0 $ for the total turning such that $
\sr C_{\ka_1 }^{\ka_2 }(u,v;\tau) $ is nonempty for all $ \tau \geq \tau_0 $ and
empty for all $ \tau < \tau_0 $ (or reversely, depending on whether $
(\ka_1,\ka_2) $ contains points to the right or to the left of $ [-1,1] $). 

Section \ref{S:frame} explains how a curve of constrained curvature may also be
regarded as a curve in the group of orientation-preserving isometries of $ \Hh^2
$ satisfying certain conditions. This perspective is sometimes useful.

In \S\ref{S:disjoint} it is shown that the nonempty canonical subspaces 
of $ \sr C_{\ka_1 }^{\ka_2 }(u,v) $ are all contractible when $ (\ka_1,\ka_2) $
is disjoint from $ [-1,1] $. The idea is to parametrize all curves in such a
subspace by the argument of its unit tangent vector when viewed as a curve in
the half-plane model, and to take (Euclidean) convex combinations. The proof
intertwines various Euclidean and hyperbolic concepts, and seems to be highly
dependent on this particular model.

In the Mercator model $ M $, any curve in $ \sr C_{\ka_1 }^{\ka_2 }(u,v) $ may
be parametrized as $ x \mapsto (x,y(x)) $ when $ (\ka_1,\ka_2) $ is
contained in $ [-1,1] $, so the bounds on its curvature can be translated into
two differential inequalities involving $ \dot y $ and $ \ddot y $, but not $ y
$ itself, because vertical translations are isometries of $ M $. This allows one
to produce a contraction of the space by working with the associated family of
functions $ \dot y $, as carried out in \S\ref{S:contained}.

In \S\ref{S:general} we define spaces of curves with constrained
curvature on a general surface $ S $, not necessarily hyperbolic, complete nor
orientable, and explain how a  Riemannian covering of $ S $ induces
homeomorphisms between spaces of curves on $ S $ and on the covering space. We
also show that if $ S $ is compact then any such space is nonempty; this is also
true in the Euclidean plane, but not in the hyperbolic plane, as mentioned
previously.  The section ends with a brief discussion of spaces of closed curves
without basepoint conditions. 

Several useful constructions, notably one which is essential to the proof of
the main result of \S\ref{S:contained}, may create discontinuities of the
curvature, and thus lead out of the class of spaces $ \sr CS_{\ka_1 }^{\ka_2
}(u,v)  $ defined in \S\ref{S:general}. To circumvent this, the paraphernalia of
$ L^2 $ functions is used in \S\ref{S:discontinuous} to define another family of
spaces, denoted $ \sr LS_{\ka_1}^{\ka_2}(u,v) $, which are Hilbert manifolds.
The curves in these spaces are regular but their curvatures are only defined
almost everywhere. The main result of \S\ref{S:discontinuous} states that the
natural inclusion $ \sr CS_{\ka_1 }^{\ka_2 }(u,v) \inc \sr LS_{\ka_1 }^{\ka_2
}(u,v)  $ is a homotopy equivalence with dense image. Sections \ref{S:general}
and \ref{S:discontinuous} are independent of the other ones.

A few exercises are included in the article. These are
never used in the main text, and their solutions consist either of 
straightforward computations or routine extensions of arguments presented
elsewhere.  

It is assumed that the reader is familiar with the geometry of the
hyperbolic plane as discussed, for instance, in chapter 2 of \cite{Thurston},
the expository article \cite{Cannon}, chapter 7 of \cite{Beardon} or chapters
3--5 of \cite{Ratcliffe}. 

\section{Basic definitions and results}\label{S:basic}

When speaking of the hyperbolic plane with no particular model in mind, we
denote it by $ \Hh^2 $.  The underlying sets of the (Poincar\'e) disk,
half-plane and hyperboloid models are denoted by:
\begin{alignat*}{9}
	D & =\set{z \in \C}{\abs{z}<1}; \\
	H & =\set{z \in \C}{\Im(z)>0}; \\
	L & =\set{(x_0,x_1,x_2) \in \E^{2,1}}
	{-x_0^2+x_1^2+x_2^2=-1\text{\ and\ }x_0>0}.
\end{alignat*}
The circle at infinity is denoted by $ \Ss^1_\infty $ or $ \bd \Hh^2 $, the norm
of a vector $ v $ tangent to $ \Hh^2 $ by $ \abs{v} $, and the
Riemannian metric by $ \gen{\ ,\ } $. When working with $ D $ or $ H $, both $
\Hh^2 $ and its tangent planes are regarded as subsets of $ \C $

In the disk and half-plane models, we will select the orientation which is
induced on the respective underlying sets by the standard orientation of $ \C $.
In the hyperboloid model, a basis $ (u,v) $ of a tangent plane is declared
positive if $ (u,v,e_0) $ is positively oriented in $ \R^{3} $; equivalently,
the Lorentzian vector product $ u\ten v $ points to the exterior region bounded
by $ L $ (i.e., the one containing the light-cone). 

Given a regular curve $ \ga\colon [0,1]\to \Hh^2 $, its \tdfn{unit tangent} is
the map 
\begin{equation*}
	\ta=\ta_\ga\colon[0,1]\to UT\Hh^2,\quad 
	\ta := \frac{\dot \ga}{\abs{\dot \ga}}.
\end{equation*}
Let $ J \colon T\Hh^2 \to T\Hh^2 $ denote the bundle map (``multiplication by $
i$'') which associates to $ v \neq 0 $ the unique vector $ Jv $ of the same norm
as $ v $ such that $ (v,Jv) $ is orthogonal and positively oriented. Then the
\tdfn{unit normal} to $ \ga $ is given by
\begin{equation*}
	\no=\no_\ga\colon [0,1]\to UT\Hh^2, \quad \no:= J \circ \ta.
\end{equation*}
Assuming that $ \ga $ has a second derivative, its
\tdfn{curvature} is the function  
\begin{equation}\label{E:curv}
	\ka=\ka_\ga \colon [0,1] \to \R,\quad
	\ka:=\frac{1}{\abs{\dot\ga}}\Big\langle\frac{D\ta}{dt},\no\Big\rangle =
	\frac{1}{\abs{\dot\ga}^2}\Big\langle\frac{D\dot\ga}{dt},\no\Big\rangle;
\end{equation} 
here $ D $ denotes covariant differentiation (along $ \ga $).  The hyperboloid
model is usually the most convenient one for carrying out computations, since it
realizes $ \Hh^2 $ as a submanifold of the vector space $ \E^{2,1} $.  For
instance, the curvature of a curve $ \ga $ on $ L $ is given by:
\begin{equation*}\label{E:curv2}
	\ka= \frac{\dot \ta \cdot \no}{\norm{\dot \ga}} =
	\frac{\ddot \ga \cdot \no}{\norm{\dot\ga}^2},
\end{equation*}
where $ \cdot $ denotes the bilinear form on $ \E^{2,1} $ and $ \norm{\ }^2 $
is the associated quadratic form.

\begin{dfn}[spaces of curves in $ \Hh^2 $]\label{D:spaces}
	Let $ u,\,v \in UT\Hh^2 $ and $ \ka_1 < \ka_2 \in \R \cup \se{\pm \infty} $.
	Then $ \sr C_{\ka_1 }^{\ka_2 }(u,v) $ (resp.~$ \bar{\sr C}_{\ka_1
	}^{\ka_2 }(u,v) $) denotes the set of $ C^r $ regular curves $ \ga \colon
	[0,1] \to \Hh^2 $ satisfying:
	\begin{enumerate}
		\item [(i)] $ \ta_\ga(0) = u $ and $ \ta_\ga(1) = v $;
		\item [(ii)] $ \ka_1 < \ka_\ga < \ka_2 $ (resp.~$ \ka_1 \leq \ka_\ga \leq
			\ka_2 $) throughout $ [0,1] $.
	\end{enumerate}
	This set is furnished with the $ C^r $ topology, for some $
	r\geq 2 $.\footnote{The precise value of $ r $ is irrelevant,
		cf.~\lref{L:C^2}.}
\end{dfn}

\begin{dfn}[osculation]\label{D:osculating}
	Two curves $ \ga,\,\eta \colon [0,1] \lto{C^2} S $ on a smooth surface $ S
	$ will be said to \tdfn{osculate} each other at $ t = t_0,\,t_1 \in [0,1] $ if
	one may reparametrize $ \ga $, keeping $ t_0 $ fixed, so that
	\begin{equation*}
		\ga(t_0) = \eta(t_1) \in S,\quad \dot\ga(t_0) = \dot\eta(t_1) \in TS
		\quad \text{and} \quad \ddot\ga(t_0) = \ddot\eta(t_1) \in T(TS).
	\end{equation*}
\end{dfn}

\begin{urmk}\label{R:osculating}
	Suppose that $ S $ is oriented and furnished with a Riemannian metric.  Then
	$ \ga $ and $ \eta $ osculate each other at $ t=t_0,\,t_1 $ if and only if 
	\begin{equation*}
		\ga(t_0) = \eta(t_1),\quad \ta_{\ga}(t_0) = \ta_{\eta}(t_1)\quad
		\text{and} \quad \ka_{\ga}(t_0) = \ka_{\eta}(t_1).
	\end{equation*}
\end{urmk}

\begin{dfn}[circle, hypercircle, horocycle, ray]\label{D:circle}
	A \tdfn{circle} is the locus of all points
	a fixed distance away from a certain point in $ \Hh^2 $ (called its
	\tit{center}). 
	A \tdfn{hypercircle} is one component of the locus of all points
	a fixed distance away from a certain geodesic in $ \Hh^2 $.
	A \tdfn{horocycle} is a curve which meets all geodesics through a
	certain point of $ \bd \Hh^2 $ orthogonally.  
	A \tdfn{ray} is a distance-preserving map $\al\colon
	[0,+\infty)\to \Hh^2 $; 
such a ray is said to \tdfn{emanate from} $ \al'(0) \in UT\Hh^2 $.
\end{dfn}

In order to understand the effect of a geometric transformation on a given
curve, it is often sufficient to replace the latter by its family of osculating
constant-curvature curves. Because the hyperbolic plane is homogeneous and
isotropic, one can represent these in a convenient position in one of the models
as a means of avoiding calculations. 

\begin{figure}[ht]
	\begin{center}
		\includegraphics[scale=.26]{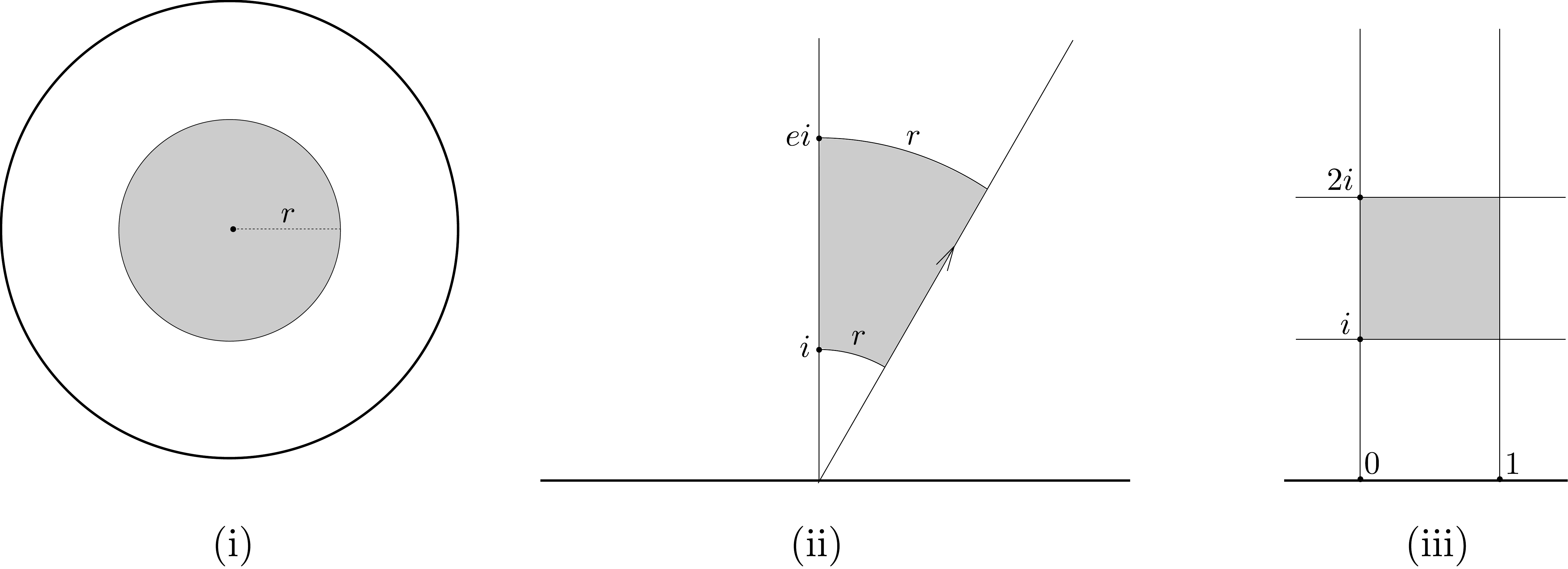}
		\caption{Computing the curvature of circles, hypercircles and horocycles
			through Gauss-Bonnet. The circle in (i) is represented in the disk model,
			while the hypercircle in (ii) and horocycles in (iii) are represented
			in the half-plane model.
	}
		\label{F:gauss_bonnet}
	\end{center}
\end{figure}

\begin{rmk}[constant-curvature curves]\label{R:constant} Define an
	\tdfn{Euclidean circle} to be either a line or a circle in $ \C $; its
	center is taken to be $ \infty $ if it is a line. 
	
	\begin{enumerate}
		\item [(a)] Circles, hypercircles and horocycles are the orbits 
			of elliptic, hyperbolic and parabolic one-parameter subgroups of
			isometries, respectively. In particular, they have constant
			curvature.
		\item [(b)]  In the models $ D $ and $ H $, a circle, horocycle or
			hypercircle appears as the intersection with $ \Hh^2 $ of an
			Euclidean circle which is disjoint, secant or (internally) tangent
			to $ \bd \Hh^2 $, respectively.
		\item [(c)] In the 	hyperboloid model, a circle, hypercircle or horocycle
			appears as a (planar) ellipse, hyperbola or parabola,
			respectively.
		\item [(d)] If all points of a circle lie at a distance $ r > 0 $ from a
			certain point, then its curvature is given by $\pm\coth r $. Thus,
			all circles have curvature greater than $ 1 $ in absolute value. 
		\item [(e)]	If all points of a hypercircle lie at a distance $ r > 0 $ from
			a certain geodesic, then its curvature is given by $ \pm \tanh r $.
			Thus, all hypercircles have curvature less than $ 1 $ in absolute value.
		\item [(f)] The curvature of a horocycle equals $ \pm 1 $.
		\item [(g)] Circles, hypercircles, horocycles and their arcs account for
			all constant-curvature curves.
	\end{enumerate}
	Assertions (d), (e) and (f) may be proved by mapping the curve through an
	isometry to the corresponding curve in Figure \ref{F:gauss_bonnet} and
	applying Gauss-Bonnet to the shaded region. The remaining assertions are
	also straightforward consequences of the transitivity of the group of
	isometries on $ UT\Hh^2 $.
\end{rmk}

In view of the sign ambiguity in the preceding formulas, it is desirable to 
redefine circles, hypercircles and horocycles not as subsets of $ \Hh^2 $, but as 
oriented curves therein. The \tdfn{radius} $ r \in \R $ of a circle or hypercircle is
now defined so that its curvature $ \ka $ is given by
\begin{equation*}
	\ka = \coth r \text{\quad or \quad} \ka = \tanh r,
\end{equation*}
respectively. Then $ \abs{r} $ is the distance from the circle (resp.~hypercircle) to
the point (resp.~geodesic) to which it is equidistant. Both expressions for
the curvature apply to horocycles if these are regarded as circles/hypercircles of
radius $ \pm \infty $, a convention which we shall adopt. Observe that in all
cases the sign of $ r $ is the same as that of the curvature. 

\begin{rmk}[curvature and orientation]\label{R:orientation}
	Let the circle at infinity be oriented from left to right in $ H $ and
	counter-clockwise in $ D $. In either of these models (compare
	\fref{F:constant}): 
	\begin{enumerate}
		\item [(a)] If a hypercircle meets the circle at infinity at an angle $
			\al \in (0,\pi) $, then its curvature equals $ \cos \al $. This 
			follows from a reduction to the hypercircle depicted in Figure
			\ref{F:gauss_bonnet}\?(ii), with the indicated orientation, by
			expressing $ r $ in terms of $ \al $. More explicitly, 
			\begin{equation*}
				r = \int_{\al}^{\frac{\pi}{2}}\frac{1}{\sin t}\,dt 
				= -\log \tan \big( \tfrac{\al}{2} \big) 
				\qquad (\al \in (0,\pi)),
			\end{equation*}
			so that $ \tanh r = \cos \al $.
		\item [(b)] If a circle is oriented (counter-)clockwise, then its
			curvature is less than $ -1 $ (greater than 1). This follows
			immediately from a reduction to Figure
			\ref{F:gauss_bonnet}\?(i).
		\item [(c)] Both preceding assertions can be extended to include
			horocycles. This follows by representing horocycles as circles
			tangent to the circle at infinity in the disk model. 
	\end{enumerate}
\end{rmk}

\begin{figure}[ht]
	\begin{center}
		\includegraphics[scale=.27]{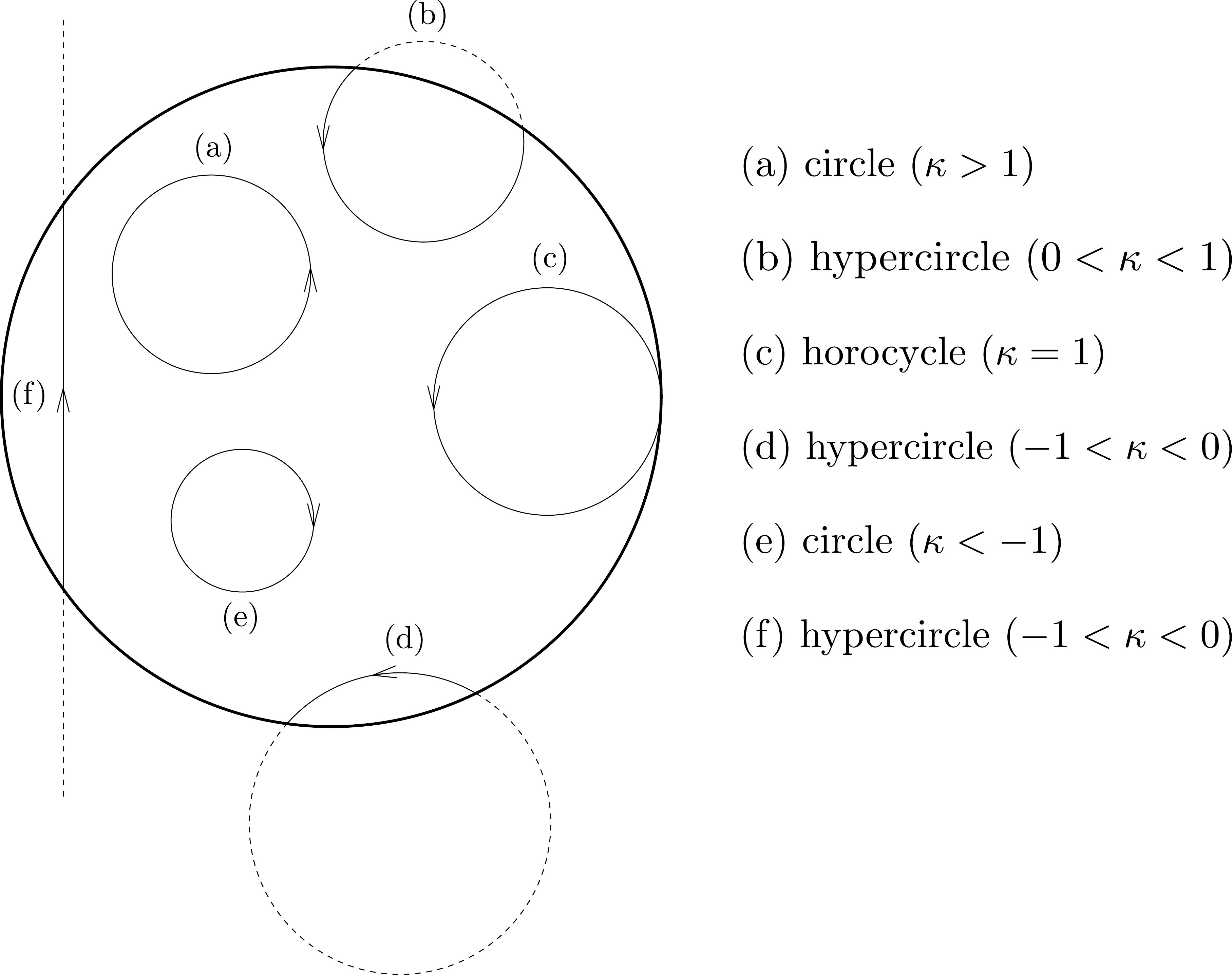}
		\caption{Examples of curves of constant curvature in the disk model.
			Note that the sign of the curvature of a hypercircle need not agree with
		that of its Euclidean curvature.}
		\label{F:constant}
	\end{center}
\end{figure}

\begin{exr}[families of parallel hypercircles]
	Let $ p \neq q \in \Ss^1_\infty $ and $ \ka \in (-1,1) $. Then there exist
	exactly two hypercircles of curvature $ \ka $ through $ p $ and $ q $.
	Furthermore, in the models $ D $ or $ H $:\footnote{Exercises are not used
	anywhere else in the text and may be skipped without any loss.}
	\begin{enumerate}
		\item [(a)] This pair of hypercircles is related by inversion in $
			\Ss^1_\infty $, and their Euclidean centers lie along the
			Euclidean perpendicular bisector $ l $ of $ \ol{pq} $.
	\end{enumerate}
	Let $ o $ and $ o' $ denote the Euclidean centers of $ \Ss^1_\infty $ and the
	geodesic through the pair $ p,\,q $, respectively. 
	\begin{enumerate}
		\item [(b)]  Any point of $ l $ except for $ o $ and $ o' $ is the
			Euclidean center of a unique hypercircle of curvature in $ (0,1) $
			through $ p $ and $ q $.  Describe its orientation in terms of the
			position of its center.
		\item [(c)]	Using \rref{R:orientation}\?(a), describe how its curvature
			changes as its center moves along $ l $.
	\end{enumerate}
\end{exr}


\begin{dfn}[normal translation]\label{D:translation}
	Let $ \ga\colon [0,1]\to \Hh^2 $ be a regular curve and $ \rho\in \R $. The
	\tdfn{normal translation} $ \ga_\rho\colon [0,1]\to \Hh^2 $ of $ \ga $ by 
	$ \rho $ is the  curve given by
	\begin{equation*}
		\ga_\rho(t)=\exp_{\ga(t)}(\rho\no(t))\qquad (t\in [0,1]),
	\end{equation*}
	where $ \no $ is the unit normal to $ \ga $ and $ \exp $ the
	(Riemannian) exponential map. In the hyperboloid model, 
	\begin{equation}\label{E:translation}
		\ga_\rho(t)=\cosh\rho \,\ga(t)+\sinh\rho\,\no(t)\qquad (t\in [0,1]).
	\end{equation}
\end{dfn}

\begin{urmk}\label{R:}
	The unit tangent bundle of any smooth surface has a canonical contact
	structure (cf.~\cite{Thurston}, \S3.7). A curve $ \te{\ga} $ on 
	$ UTS $ is Legendrian (with respect to this structure) if its projection to
	$ S $ always points in the direction prescribed by $ \te{\ga}$. When $ S $
	is oriented, a (local) flow of contact automorphisms $ \phi_{\rho} $ on $ UTS
	$ can be defined by letting $ \phi_{\rho}(u) $ be the parallel translation
	of $ u $ along the geodesic perpendicular to $ u $ by a signed distance of $
	\rho $ toward the left ($ u \in UTS $, $ \rho \in \R $). If $ S $ is
	complete, $ \phi_\rho $ is defined on all of $ UTS $ for each $ \rho \in \R
	$.  The normal translation of $ \ga $ as defined in \dref{D:translation} is
	nothing but the projection to $ S $ of $ \phi_\rho(\ta_\ga) $, in the
	special case where $ S = \Hh^2 $.  As will be seen below, this operation may
	create or remove cusps.
\end{urmk}

\begin{rmk}[normal translation of constant-curvature curves in $ \Hh^2
	$]\label{R:normal}\ Let $ r,\,\rho\in \R $.
	\begin{enumerate} 
		\item [(a)] The normal translation by $ \rho $ of a circle of radius $ r
			$ is a circle of radius $ r-\rho $, equidistant to the same point as
			the original circle. 		
		\item [(b)]  The normal translation by $ \rho $ of a hypercircle of
			radius $ r $ is a hypercircle of radius $ r-\rho $, equidistant to
			the same geodesic as the original hypercircle.  		
		\item [(c)] A normal translation of a horocycle is another horocycle,
			meeting orthogonally the same family of geodesics as the original
			horocycle. 
	\end{enumerate}
	More concisely, the normal translation of a constant-curvature curve of
	radius $ r\in \R\cup\se{\pm\infty}$ by $ \rho\in \R $ is a curve of the same
	type of radius $ r-\rho $. 	
	\begin{enumerate}
		\item [(d)] A normal translation of a hypercircle (resp.~horocycle) meets $
			\bd \Hh^2 $ in the same points (resp.~point) as the original
			hypercircle (resp.~horocycle), when represented in one of the models $ D
			$ or $ H $. 
	\end{enumerate}
	Once again, to prove these assertions one can use an isometry to represent
	the circle (hypercircle, horocycle) as in Figure \ref{F:gauss_bonnet}, where
	they become trivial.  
	
	Notice also that $ \rho $ goes from 0 to $ 2r $, the circle shrinks to a
	singularity ($ \rho = r $) and then expands back to the original circle, but
	with reversed orientation ($ \rho = 2r $).  When $ \rho = r $ the hypercircle
	becomes the geodesic, and when $ \rho = 2r $ it becomes the other component
	of the locus of points at distance $ \abs{r} $ from the geodesic. This
	behavior is subsumed in the following result.
\end{rmk}

\begin{lem}[normal translation of general curves]\label{L:normal2} 
	Let $ \ga\colon [0,1]\to \Hh^2 $ be smooth and regular,
	\begin{equation*}
		\ka_-=\min_{t\in [0,1]}\ka(t)\quad \text{and}\quad 
		\ka_+=\max_{t\in [0,1]}\ka(t).  
	\end{equation*}
	Assume that $ \coth \rho \nin [\ka_-,\ka_+] $. Then:
	\begin{enumerate}
		\item [(a)] The normal translation $ \ga_{\rho} $ is regular. In
			particular, $ \ga_\rho $ is regular for all $ \rho $ in some open
			interval containing 0, and for all $ \rho \in \R $ in case $
			[\ka_-,\ka_+]\subs [-1,1] $.
		\item [(b)] $ \ta_{\ga_\rho} \equiv \ta_{\ga} $ if these are regarded as
			taking values in $ \E^{2,1} \sups L $. 
	\end{enumerate}
	Given $ t\in [0,1] $, there exists a unique constant-curvature curve which
	osculates $ \ga $ at $ \ga(t) $. The \tdfn{radius of curvature} $ r_\ga(t) $
	of $ \ga $ at $ \ga(t) $ is defined as the radius of this osculating curve.
	\begin{enumerate}
		\item [(c)]	The radii of curvature of $ \ga_\rho $ and $ \ga $ are
			related by $ r_{\ga_\rho} = r_\ga - \rho $.
		\item [(d)] The curvature of $
			\ga_\rho $ is given by:
			\begin{equation*}
				\ka_{\ga_\rho}(t)=\begin{cases}
					\frac{1-\ka_\ga(t)\coth\rho}{\ka_\ga(t) - \coth \rho} &
					\text{if\quad $ \vert{\ka_\ga(t)}\vert>1 $;} \\
					\frac{\ka_\ga(t) - \tanh \rho}{1-\ka_\ga(t)\tanh\rho} &
					\text{if\quad  $ \vert{\ka_\ga(t)}\vert<1 $;} \\
					\ka_\ga(t) & \text{if\quad $ \vert{\ka_\ga(t)}\vert=1 $. } 
				\end{cases}
			\end{equation*}
		\item [(e)] $ (\ga_\rho)_{-\rho}=\ga $.
	\end{enumerate}
\end{lem}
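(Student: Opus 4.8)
The plan is to carry out every computation in the hyperboloid model $L \subs \E^{2,1}$, where the normal translation has the explicit linear form \eqref{E:translation} and the triple $(\ga,\ta,\no)$ is a pseudo\-/orthonormal moving frame: $\ga\cdot\ga=-1$, $\ta\cdot\ta=\no\cdot\no=1$, and the three are mutually $\cdot$-orthogonal. Since every assertion is invariant under reparametrization of $\ga$ (which merely reparametrizes $\ga_\rho$ in the same way), I would assume $\ga$ is parametrized by arc length. Differentiating the orthonormality relations in $\E^{2,1}$ and reading off the curvature from \eqref{E:curv} then yields the structure equations
\begin{equation*}
	\ga'=\ta,\qquad \ta'=\ga+\ka\,\no,\qquad \no'=-\ka\,\ta,
\end{equation*}
the coefficient in the middle equation being exactly $\ka$ because it is the $\no$-component of $\ta'$.

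For (a) and (b) I would simply differentiate \eqref{E:translation}. Using the structure equations one gets $\ga_\rho'=\cosh\rho\,\ta-\sinh\rho\,\ka\,\ta=(\cosh\rho-\ka\sinh\rho)\,\ta$. Hence $\ga_\rho$ is regular exactly when $\cosh\rho-\ka(s)\sinh\rho\neq 0$ for every $s$, i.e.\ (for $\rho\neq 0$) when $\coth\rho\neq\ka(s)$ for all $s$; as $\ka$ assumes precisely the values in $[\ka_-,\ka_+]$, this is the hypothesis $\coth\rho\nin[\ka_-,\ka_+]$. The two ``in particular'' clauses then fall out of the behaviour of $\coth$: it omits a neighbourhood of $0$ (since $\coth\rho\to\pm\infty$ as $\rho\to 0^\pm$) and omits all of $[-1,1]$ altogether. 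Because $\ga_\rho'$ is a scalar multiple of $\ta$, the unit tangent $\ta_{\ga_\rho}$ is $\pm\ta$ as a vector of $\E^{2,1}$, and it equals $\ta$ wherever $\cosh\rho-\ka\sinh\rho>0$; this is (b).

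For (c) and (d) I would reduce to the constant-curvature case already settled in \rref{R:normal}. Normal translation is determined by the $1$-jet of $\ga$ (position and unit normal), while, by the formula for $\ga_\rho'$ together with the structure equations, the curvature of $\ga_\rho$ is determined by its $2$-jet. Thus if $c$ is the constant-curvature curve osculating $\ga$ at $\ga(t)$, so that $c$ and $\ga$ share position, unit tangent and curvature there, then by \rref{R:osculating} their translates $c_\rho$ and $\ga_\rho$ again share all three, i.e.\ $c_\rho$ osculates $\ga_\rho$ at $\ga_\rho(t)$. Since \rref{R:normal} turns a curve of radius $r$ into one of radius $r-\rho$, applying this to $c$ gives $r_{\ga_\rho}(t)=r_{c_\rho}=r_\ga(t)-\rho$, which is (c). Formula (d) is then just (c) rewritten through the conventions $\ka=\coth r$ (for $\abs{\ka}>1$) and $\ka=\tanh r$ (for $\abs{\ka}<1$): expanding $\coth(r-\rho)$ and $\tanh(r-\rho)$ by the hyperbolic addition formulas reproduces the two rational expressions, and the horocyclic case $\abs{\ka}=1$ is the fixed value $\ka_{\ga_\rho}=\ka$. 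Alternatively (d) comes out directly on differentiating $\ga_\rho'=(\cosh\rho-\ka\sinh\rho)\ta$ once more and contracting with the unit normal of $\ga_\rho$.

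Finally, (e) drops out once the unit normal of $\ga_\rho$ is identified. The vector $\sinh\rho\,\ga+\cosh\rho\,\no$ is a unit spacelike vector $\cdot$-orthogonal to both $\ga_\rho$ and $\ta$, so it is $\no_{\ga_\rho}$ on the locus where $\ga_\rho$ runs in the direction of $+\ta$; substituting into \eqref{E:translation} with $-\rho$ in place of $\rho$ and using $\cosh^2\rho-\sinh^2\rho=1$ collapses $(\ga_\rho)_{-\rho}$ to $\ga$. The one genuinely delicate point, and the step I expect to cost the most care, is the orientation bookkeeping concealed in the sign of $\cosh\rho-\ka\sinh\rho$: once $\rho$ is large enough for this factor to change sign (the translate passing a focal point), $\ga_\rho$ reverses direction, so that $\ta_{\ga_\rho}=-\ta$ and $\no_{\ga_\rho}$ flips with it. The identifications underlying (b), (d) and (e) must therefore be tracked by continuity from $\rho=0$, where the frame $(\ta_{\ga_\rho},\no_{\ga_\rho})=(\ta,\no)$ is positively oriented by construction.
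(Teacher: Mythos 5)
Your proposal is correct and takes essentially the same approach as the paper, whose proof consists of exactly your steps stated tersely: parts (a) and (b) from differentiating \eqref{E:translation} in the hyperboloid model, part (c) from the observation that normal translation preserves osculation together with \rref{R:normal}, part (d) from the addition formulas for $\coth$ and $\tanh$, and part (e) by direct verification. The only difference is that you spell out the computations and explicitly flag the sign of $\cosh\rho-\ka\sinh\rho$ (the focal-point/orientation issue affecting (b), (d) and (e)), a point the paper's one-line proof passes over in silence and which you handle correctly by tracking the frame by continuity from $\rho=0$.
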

\begin{proof}
	For (a) and (b), use \eqref{E:translation}. It is clear from the
	definition of ``osculation'' that if $ \eta $ osculates $ \ga $ at $ \ga(t)$,
	then $ \eta_\rho $ osculates $ \ga_\rho $ at $ \ga_\rho(t) $. Thus part (c)
	is a consequence of \eref{R:normal}.  Part (d) follows from the addition
	formulas for $ \coth $ and $ \tanh $, and part (e) is obvious.
\end{proof}

The topology of $ \sr C_{\ka_1}^{\ka_2}(u,v) $ depends in principle upon eight
real parameters: three for each of $ u,\,v\in UT\Hh^2 $, and two for the curvature
bounds. This number can be halved by a suitable use of normal translations and
isometries.  In the sequel, two intervals are said to \tdfn{overlap} if they
intersect but neither is contained in the other one.

\begin{prp}[parameter reduction]\label{P:reduction}
	Let $ (\ka_1,\ka_2) \neq (-1,1) $ and $ u,\,v,\,\bar{u} \in UT\Hh^2 $ be given.
	Then there exist $ \bar{v}\in UT\Hh^2 $ and $ \ka_0 $ such that $ \sr
	C_{\ka_1}^{\ka_2}(u,v) $ is canonically homeomorphic to a space of the type
	listed in Table \ref{Ta:reduction}. 
\end{prp}

\begin{table}[h!]
	\begin{center}
		\begin{tabular}{ c c c }\hline 
			Case & $ \sr C_{\ka_1}^{\ka_2}(u,v) $ homeomorphic to & Range of
			$ \ka_0 $ \rule[-8pt]{0pt}{22pt}  \\ \hline

			$ (\ka_1,\ka_2) $ contained in $ [-1,1] $ & $ \sr
		C_{0}^{\ka_0}(\bar{u},\bar{v}) $ & $ (0,1] $  \rule{0pt}{14pt} \\	

		$ (\ka_1,\ka_2) $ disjoint from $ [-1,1] $ & $ \sr
		C_{\ka_0}^{+\infty}(\bar{u},\bar{v}) $ & $ [1,+\infty) $
			\rule{0pt}{14pt}	\\

			$ (\ka_1,\ka_2) $ overlaps $ [-1,1] $ &
			$ \sr C_{\ka_0}^{+\infty}(\bar{u},\bar{v}) $ &  $ [-1,1) $
				\rule{0pt}{14pt} \\	

				$ (\ka_1,\ka_2) $ contains $ [-1,1] $ & $ \sr
			C_{-\ka_0}^{+\ka_0}(\bar{u},\bar{v}) $ & $ (1,+\infty] $   
			\rule{0pt}{14pt} \\	

		\end{tabular}\vspace{10pt}
		\caption{
			Reduction of the number of parameters controlling the topology of 
			$ \sr C_{\ka_1}^{\ka_2}(u,v) $, for $ (\ka_1,\ka_2) \neq (-1,1) $. 
		}
		\label{Ta:reduction} 
	\end{center}
\end{table}

The special role played by the interval $ [-1,1] $ stems from the fact that any
normal translation of a horocycle is a horocycle.   The four classes listed
in Table \ref{Ta:reduction} are genuinely different in terms of their
topological properties, as discussed in the introduction. In the only case not
covered by \pref{P:reduction}, namely $ (\ka_1,\ka_2)=(-1,1) $, given $
u,\,v,\,\bar{u} $, one can find $ \bar v $ such that $ \sr C_{-1}^{+1}(u,v) $ is
homeomorphic to $ \sr C_{-1}^{+1}(\bar u,\bar v) $;  for this it is sufficient
to compose all curves in the former with an isometry taking $ u $ to $ \bar{u}
$. 

\begin{proof}
	The argument is similar for all four classes, so we consider in detail only
	the case where $ (\ka_1,\ka_2) $ overlaps $ [-1,1] $.  Firstly, notice that
	composition with an orientation-reversing isometry switches the sign of the
	curvature of a curve. Therefore, by applying a reflection in some geodesic
	if necessary, it can be assumed that $ -1 \leq \ka_1 < 1 < \ka_2 $ (instead
	of $ \ka_1 < -1 < \ka_2 \leq 1 $).  Write $ \ka_2 = \coth \rho_2 $, $ \ka_1
	= \tanh \rho_1 $ and $ \ka_0 = \tanh(\rho_1-\rho_2) $. Then normal
	translation by $ \rho_2 $ (i.e., the map $ \ga \mapsto \ga_{\rho_2} $)
	provides a homeomorphism
	\begin{equation*}
		\sr C_{\ka_1}^{\ka_2}(u,v)\to \sr C_{\ka_0}^{+\infty}(u',v'),
	\end{equation*}
	where $ u' $ is obtained by parallel translating $ u $ by a distance $
	\rho_2 $ along the ray emanating from $ Ju $, and similarly for $ v'
	$. The inverse of this map is simply normal translation by $ -\rho_2 $. All
	details requiring verification here  were already dealt with in 
	\lref{L:normal2}.  Finally, post-composition of curves with the unique
	orientation-preserving hyperbolic isometry taking $ u' $ to $ \bar{u} $
	yields a homeomorphism 
	\begin{equation*}
		\sr C_{\ka_0}^{+\infty}(u',v') \to
		\sr C_{\ka_0}^{+\infty}(\bar{u},\bar{v}).
	\end{equation*}
	
	In the remaining cases, let $ \rho_i $ denote the radius of a curve of
	constant curvature $ \ka_i $ ($ i=1,2 $). For the first class in the table,
	apply a reflection if necessary to ensure that $ \ka_1 > -1 $, then use
	normal translation by $ \rho_1 $. For the second class, reduce to the case
	where $ \ka_1 \geq 1 $ and apply normal translation by $ \rho_2 $. For the
	fourth class, apply a normal translation by $ \frac{1}{2}(\rho_1+\rho_2) $.
	In all cases, use an orientation-preserving isometry to adjust the initial
	unit tangent vector to $ \bar u $. 
\end{proof}

\begin{urmk}
	The homeomorphism constructed in the proof of \pref{P:reduction} operates on
	the curves in a given space by a composition of a normal translation and an
	isometry. It is ``canonical'' in the sense that these two transformations,
	as well as the values of $ \ka_0 $ and $ \bar v $, are uniquely determined
	by $ \ka_1,\,\ka_2,\,u,\,v $ and $ \bar u $.  However, this does not
	preclude the existence of some more complicated homeomorphism between spaces
	in the second column of Table \ref{Ta:reduction}.
\end{urmk}

\begin{exr}[extension of \pref{P:reduction}]\label{X:reduction}
	Let $ (\ka_1,\ka_2) \neq (-1,1) $ and $ u,\,v,\,\bar u \in UT\Hh^2 $ be
	given. Use the argument above to prove:
	\begin{enumerate}
		\item [(a)] If $ (\ka_1,\ka_2) $ contains $ [-1,1] $, then there exist $
			\ka_0 \in [-\infty,-1) $ and $ \bar v \in UT\Hh^2 $ such that
			\begin{equation*}
				\sr C_{\ka_1 }^{\ka_2 }(u,v) \home 
				\sr C_{\ka_0}^{+\infty}(\bar{u},\bar{v}). 
			\end{equation*}
		\item [(b)] If $ -1<\ka_1 < \ka_2 < 1 $, then there exist $ \ka_0 \in
			(0,1) $ and $ \bar v \in UT\Hh^2 $ such that
			\begin{equation*}
				\sr C_{\ka_1}^{\ka_2}(u,v) \home \sr C_{-\ka_0}^{+\ka_0}(\bar
				u,\bar v). 
			\end{equation*}
			Note that the hypothesis here is more restrictive than in the 
			first case of the table.
	\end{enumerate}
\end{exr}


\section{Voidness of the canonical subspaces}\label{S:voidness}

The purpose of this section is to discuss which of the canonical subspaces of $
\sr C_{\ka_1}^{\ka_2}(u,v) $ are empty. In the sequel $ \sr
C_{\ka_1 }^{\ka_2 }(u,\cdot) $ (resp.~$ \bar{\sr C}_{\ka_1 }^{\ka_2 }(u,\cdot) $)
denotes the space obtained from \dref{D:spaces} by omitting the restriction that
$ \ta_{\ga}(1) = v $.

\begin{lem}[attainable endpoints]\label{L:attainable}
	If at least one $ \abs{\ka_i} > 1 $, then $ \sr
	C_{\ka_1}^{\ka_2}(u,v) \neq \emptyset $ for all $ u,v\in UT\Hh^2 $.
\end{lem}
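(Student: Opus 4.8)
The plan is to reinterpret nonemptiness of $ \sr C_{\ka_1}^{\ka_2}(u,v) $ as a controllability statement on the group $ G = \mathrm{Isom}^+(\Hh^2) $ and to exploit the fact that circles are \emph{closed}. First, the hypothesis furnishes a value $ \ka^* \in (\ka_1,\ka_2) $ with $ \abs{\ka^*} > 1 $: if $ \ka_2 > 1 $ take any $ \ka^* \in (\max\{\ka_1,1\},\ka_2) $, and if $ \ka_1 < -1 $ take $ \ka^* \in (\ka_1,\min\{\ka_2,-1\}) $; fix a closed subinterval $ [\ka^-,\ka^+] \subs (\ka_1,\ka_2) $ with $ \ka^- < \ka^* < \ka^+ $. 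Identify $ UT\Hh^2 $ with $ G $ via the simply transitive action on unit tangent vectors, so that $ u,v $ become elements of $ G $. A unit-speed curve $ \ga $ of curvature $ \ka $ lifts to its frame $ g=(\ga,\ta,\no) $, and the hyperboloid-model formulas of \S\ref{S:basic} (namely $ \dot\ga=\ta,\ \dot\ta=\ga+\ka\no,\ \dot\no=-\ka\ta $) give the left-invariant Frenet system $ \dot g = g\,\xi(\ka) $, where $ \xi(\ka)=A+\ka B $, $ A $ being the generator of the geodesic flow and $ B $ that of the rotation of the frame. Thus it suffices to show that, using controls $ \ka(t)\in[\ka^-,\ka^+] $, the endpoint $ g(1) $ can be made equal to $ v $ for every prescribed $ g(0)=u $.

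The crucial point — and the place where $ \abs{\ka^*}>1 $ is used — is that the constant control $ \ka\equiv\ka^* $ traces a circle of curvature $ \ka^* $, which by \rref{R:constant}\?(a) is the orbit of an \emph{elliptic} one-parameter subgroup; equivalently $ \xi(\ka^*) $ has purely imaginary eigenvalues (its characteristic polynomial is $ -\lambda(\lambda^2+(\ka^*)^2-1) $), so the flow $ t\mapsto\exp(t\,\xi(\ka^*)) $ is periodic, of some period $ T>0 $. Were $ \abs{\ka^*}\le 1 $ the orbit would be a non-closed horocycle or hypercircle and the argument would break down, consistent with the lemma failing for intervals inside $ [-1,1] $. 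Next, the family $ \{\xi(\ka):\ka\in[\ka^-,\ka^+]\} $ satisfies the Lie algebra rank condition: differences $ \xi(\ka^+)-\xi(\ka^-) $ are multiples of $ B $, whence $ A=\xi(\ka^*)-\ka^* B $ is also available, and $ [A,B] $ supplies the remaining (horocyclic) direction, so these elements generate the whole three-dimensional Lie algebra of $ G $.

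It remains to upgrade this infinitesimal information to \emph{global} controllability, which is the main obstacle: in the presence of the drift $ A $, the rank condition alone guarantees only that the attainable set $ \sr R $ from $ u $ has nonempty interior, not that it is all of $ G $. Here periodicity rescues us. The set $ \sr R $ is a subsemigroup of $ G $ containing the entire circle subgroup $ \{\exp(t\,\xi(\ka^*))\}_{t\in\R} $, since $ \exp(-s\,\xi(\ka^*))=\exp((T-s)\,\xi(\ka^*))\in\sr R $; thus the drift direction is reversible and $ \sr R\cap\sr R^{-1} $ has nonempty interior. A standard argument then forces $ \sr R $ to be an open subgroup, hence all of the connected group $ G $ (this is the controllability theorem for right-invariant systems whose drift generates a recurrent — here periodic — flow). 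Consequently $ v $ is attainable from $ u $. Finally, since $ \sr R $ is open and the corners of a piecewise-constant control can be smoothed without leaving the \emph{open} interval $ (\ka_1,\ka_2) $, one obtains a genuine $ C^r $ curve $ \ga\in\sr C_{\ka_1}^{\ka_2}(u,v) $, as required.
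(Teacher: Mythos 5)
Your reformulation is genuinely different from the paper's proof, and most of it is correct: the frame ODE $\dot g = g\,\xi(\ka)$ with $\xi(\ka)=A+\ka B$ on $\Iso_+(\Hh^2)$ matches the logarithmic-derivative formulas of \S\ref{S:frame}; the spectrum computation showing that $\exp(t\,\xi(\ka^*))$ is periodic precisely when $\abs{\ka^*}>1$ is right; and $B$, $A$, $[A,B]$ do span the three-dimensional Lie algebra. The paper argues instead by reducing to $\ka_2=+\infty$ via normal translation and then geometrically: openness of the endpoint map, concatenation with rotated copies to fill a fiber of $UT\Hh^2$, and half-circles of bounded radius to move between fibers. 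The two proofs are cousins---the paper's concatenations are your semigroup, and its ``sufficiently tight circles'' are your periodic flows---but yours avoids the normal-translation reduction and is phrased in a way that could transfer to other invariant systems.

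The gap is the passage from the circle subgroup to global controllability. From ``$\sr R$ contains $K=\{\exp(t\,\xi(\ka^*))\}_{t\in\R}$'' you infer that $\sr R\cap\sr R^{-1}$ has nonempty interior; this is a non sequitur, since $K$ is one-dimensional (hence has empty interior in $G$), and reversibility of the single drift direction does not make an open set's worth of endpoints reversible. Moreover, the standard theorem you invoke (rank condition plus recurrent drift implies controllability, Jurdjevic--Sussmann) is proved for \emph{unbounded} controls, where rescaling places $\pm B$ in the Lie saturate; your controls are confined to $[\ka^-,\ka^+]$, so it does not apply as stated. The fix is short and stays inside your framework: choose the control interval entirely in the elliptic range, $[\ka^-,\ka^+]\subs(\ka_1,\ka_2)\cap\set{\ka}{\abs{\ka}>1}$, which the hypothesis permits. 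Then \emph{every} generator $\exp(t\,\xi(\ka))$, $t\geq 0$, of the attainable semigroup $\sr S$ from the identity has its inverse in $\sr S$, because $\exp(-t\,\xi(\ka))=\exp\big((nT_\ka-t)\,\xi(\ka)\big)$ with $T_\ka=2\pi/\sqrt{\ka^2-1}$ the period and $n$ large; consequently $\sr S$ is a subgroup (the inverse of a product of generators is the reversed product of their inverses). By the rank condition and Krener's theorem (the control-theoretic counterpart of \lref{L:submersion}), $\sr S$ has nonempty interior; a subgroup with nonempty interior is open, and an open subgroup of the connected group $\Iso_+(\Hh^2)$ is the whole group. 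Hence $v\in u\,\sr S$ for all $u,v$. Your final smoothing step is at the same level of rigor as the paper's own proof; to make it precise, observe that the piecewise-constant-curvature curve lies in $\sr L_{\ka_1}^{\ka_2}(u,v)$ and invoke the density of smooth curves there (\lref{L:dense}).
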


\begin{proof}
	By symmetry, we may assume that $ \ka_2>1 $. Further, using normal
	translation by $ \rho_2=\arccoth \ka_2 $, we reduce to the case where $
	\ka_2=+\infty $.  Let $ q \in \Hh^2 $ and $ x \in UT\Hh^2_q $ be arbitrary.
	Consider the map 
	\begin{equation*}
		F_x \colon \sr C_{\ka_1 }^{+\infty}(x,\cdot) \to UT\Hh^2,\quad F_x(\ga) =
		\ta_{\ga}(1). 
	\end{equation*}
	It is not hard to see that this map is open; cf.~ \lref{L:submersion}. 
	We begin by proving that $ \Im(F_x) \sups UT\Hh^2_q $.  
	Since sufficiently tight circles are allowed, $ x \in \Im(F_x) $.
	Let $ I\subs UT\Hh^2_q  \cap \Im(F_x) $ be an open interval about $ x $.
	For each $ y\in I $, let $ R_y $ denote the elliptic isometry
	fixing $ q $ and taking $ x $ to $ y $. Let
	\begin{equation*}
		\phantom{\quad (y,\,z \in UT\Hh^2_q)} 
		\ga_y \in \sr C_{\ka_1 }^{\ka_2 }(x,y) \quad 
		\text{and} \quad \ga_z \in \sr C_{\ka_1 }^{\ka_2 }(x,z)
		\quad (y,\,z \in UT\Hh^2_q).
	\end{equation*}
	Then the concatenation 
	\begin{equation*}
		\ga_y\ast \big( R_y\circ \ga_z \big)
	\end{equation*}
	starts at $ x $ and ends at $ dR_y(z) $. This curve may not have a second
	derivative at the point of concatenation, but it may be approximated by a
	smooth curve in $ \sr C_{\ka_1 }^{\ka_2 }(x,R_yz) $. This shows that $
	\Im(F_x) $ contains the interval $ \set{R_yz}{y \in I} $ about $ z $
	whenever it contains $ z $.  In turn, this implies that $ \Im(F_x) $
	includes $ UT\Hh^2_q $.  Therefore, if $ \Im(F_u) $ contains any tangent
	vector $ x $ based at $ q $, it must also contain $ UT\Hh^2_q \subs \Im(F_x)
	$, by transitivity. 
	
	Now let $ r:=\arccoth \ka_1 $ in case $ \ka_1>1 $ and $ r:=+\infty $
	otherwise. If $ \Im(F_u) $ contains tangent vectors at $ q $, then it
	also contains tangent vectors at $ q' $ for any $ q'\in B(q;2r) $, since
	$ q' $ can be reached from $ q $ by the half-circle centered at the midpoint
	of the geodesic segment $ qq' $ of radius $ \frac{1}{2}d(q,q') <r$. We
	conclude that $ \Im(F_u)=UT\Hh^2 $, that is, $ \sr C_{\ka_1 }^{\ka_2 }(u,v)
	\neq \emptyset $ for all $ v \in UT\Hh^2 $.
\end{proof}

The converse of \lref{L:attainable} is an immediate consequence of the following
result; cf.~also \cref{C:closed}. 

\begin{lem}[unattainable endpoints]\label{L:unattainable}
	Let $ (\ka_1,\ka_2)\subs[-1,1] $ and $ u\in UT\Hh^2 $. The hypercircles (or
	horocycles) of curvature $ \ka_1 $ and $ \ka_2 $ tangent to $ u $ divide $
	\Hh^2 $ into four open regions.  Any curve in $ \sr
	C_{\ka_1}^{\ka_2}(u,\cdot) $ is confined to one of these regions $ R $.
	Conversely, a point in $ R $ can be reached by a curve of constant curvature
	in $ \sr C_{\ka_1}^{\ka_2}(u,\cdot) $.
\end{lem}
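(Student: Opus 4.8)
The plan is to read everything off in the Poincar\'e disk model $D$, after applying an orientation-preserving isometry so that $u$ is based at the centre $p$ and points along the positive real axis (so $Ju$ points upward). Write $C_1,C_2$ for the complete constant-curvature curves of curvatures $\ka_1,\ka_2$ tangent to $u$ at $p$; these exist and are unique by \rref{R:constant} together with the transitivity of the isometry group on $UT\Hh^2$. By \rref{R:constant}\?(b) each $C_i$ is, in $D$, an arc of a Euclidean circle tangent to the real axis at the origin; two such circles are mutually tangent at the origin and meet nowhere else, so $C_1$ and $C_2$ intersect only at $p$. They have second-order contact there, with $C_2$ (the larger curvature) lying on the $Ju$-side of $C_1$, so locally they look like two tangent parabolas $y=\tfrac{\ka_i}{2}x^2$. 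Inspecting the complementary sectors — four of them locally, and, since the two curves meet only at $p$, four of them globally (the two ``horns'' between the curves do not reconnect) — shows that $C_1\cup C_2$ divides $\Hh^2$ into exactly four regions. I take $R$ to be the one entered by following $u$ forward from $p$, i.e.\ the region lying between $C_1$ and $C_2$ on the side into which $u$ points.

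For the last assertion I would produce an explicit foliation of $R$. For $\ka\in(\ka_1,\ka_2)$ let $C_\ka$ be the constant-curvature curve of curvature $\ka$ tangent to $u$ at $p$; as above, distinct $C_\ka$ meet only at $p$, and as $\ka$ runs from $\ka_1$ to $\ka_2$ their forward arcs sweep monotonically across $R$, interpolating between the boundary curves $C_1$ and $C_2$. Hence each $q\in R$ lies on exactly one $C_{\ka(q)}$ with $\ka(q)\in(\ka_1,\ka_2)$, and the forward arc of $C_{\ka(q)}$ from $p$ to $q$ is a constant-curvature curve in $\sr C_{\ka_1}^{\ka_2}(u,\cdot)$ ending at $q$, as required.

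The heart of the matter is confinement: every $\ga\in\sr C_{\ka_1}^{\ka_2}(u,\cdot)$ stays in $\bar R$. Since $-1\le\ka_1<\ka_\ga<\ka_2\le 1$, such a $\ga$ satisfies $\abs{\ka_\ga}<1$ throughout. I would pass to the Mercator model associated with the geodesic through $p$ orthogonal to $u$; there the conformal factor depends on a single bounded coordinate $x$, vertical translations are isometries, and the geodesic curvature of a graph $x\mapsto(x,y(x))$ is a quasilinear expression in $\dot y,\ddot y$ (with $\ddot y$ occurring with positive coefficient) in which $y$ itself does not appear. Granting that $\ga$, like the barriers, is such a graph, with common $1$-jet $y(0)=\dot y(0)=0$ at $p$, the bounds $\ka_1<\ka_\ga<\ka_2$ become $\ka[f_1]<\ka[y]<\ka[f_2]$, where $f_1,f_2$ are the graphs of $C_1,C_2$. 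Writing the resulting first-order equation for $\dot y$ and comparing sub- against supersolution with matching data at $x=0$ then yields $y$ strictly between $f_1$ and $f_2$ for $x>0$, i.e.\ $\ga$ lies between $C_1$ and $C_2$, hence in $\bar R$.

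The main obstacle is exactly the parenthetical hypothesis that a curve with $\abs{\ka_\ga}<1$ be a graph in this model — equivalently, that $\ga$ cannot double back and cross a barrier transversally, so that any contact with $\bd R$ is necessarily tangential. I expect to establish this by tracking the tangent direction of $\ga$: monotonicity of the relevant coordinate amounts to the tangent never becoming parallel to the equidistants of the base geodesic, and since those equidistants have curvature approaching $\pm1$ towards the edge of the strip (hence exceeding $\abs{\ka_\ga}$ there), the bound $\abs{\ka_\ga}<1$ prevents $\ga$ from turning to meet them. Once this graph property is secured the comparison above is routine; this is why reducing to the Mercator picture is cleaner than arguing directly with osculating circles in $\Hh^2$, where excluding transversal contact with the barriers is precisely the awkward point.
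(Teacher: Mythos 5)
Your reduction to the disk model, the count of four regions, and the foliation argument for the converse are all fine (the paper is equally brief on those points), and the Mercator comparison you outline would indeed be routine \emph{once} the graph property is known. But the graph property is precisely where your proof has a genuine gap, and your proposed argument for it does not work. Tangency to a parallel can occur anywhere in the strip, not only near its edges: if $ \ga $ first acquires a vertical tangent at a point with abscissa $ x_1 $, say pointing upward, then $ \ga $ lies locally on the side $ x \leq x_1 $ of the parallel through that point, and the curvature comparison at this one-sided tangency yields only $ \ka_\ga \geq \cos x_1 $ --- perfectly compatible with $ \abs{\ka_\ga} < 1 $ whenever $ x_1 $ is in the interior of $ (0,\pi) $, where $ \cos x_1 $ is small. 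Your remark that the equidistants' curvature approaches $ \pm 1 $ ``towards the edge of the strip'' only rules out first tangencies near the edges; it says nothing about the interior, so the monotonicity of $ x $ along $ \ga $ is not established. The obstruction to vertical tangents is genuinely global, not pointwise. Note also that you cannot simply invoke \lref{L:graph} here: in the paper that lemma is deduced from \cref{C:closed}, which is itself a corollary of the very lemma you are proving, so without an independent proof of the graph property your plan is circular --- and the independent proof you sketch is the invalid step.

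The paper sidesteps this difficulty entirely, which is why its proof needs no graph hypothesis and handles tangential and transversal contact with $ \bd R $ uniformly. It argues by contradiction with a barrier/maximum principle: if $ \ga $ first returns to $ \bd R $ at time $ T $, at a point of the bounding hypercircle $ E $ of curvature $ \ka_2 = \tanh r $, then $ \de(t) = d(\ga(t),E) $ vanishes at $ t=0 $ and $ t=T $, hence attains an interior maximum $ \rho > 0 $ at some $ \tau \in (0,T) $. By \rref{R:normal}, the level set $ \set{q}{d(q,E)=\rho} $ on the side of $ R $ is again a hypercircle (or horocycle), of curvature $ \tanh(r+\rho) $, and it is tangent to $ \ga $ at $ \ga(\tau) $ since $ \dot\de(\tau)=0 $; because $ \de $ is maximal there, curvature comparison gives $ \ka_\ga(\tau) \geq \tanh(r+\rho) > \ka_2 $, contradicting $ \ka_\ga < \ka_2 $. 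The key hyperbolic input is that normal translates of hypercircles/horocycles are again such curves with strictly larger curvature, so the contradiction is produced at an interior maximum of $ \de $ rather than at the (possibly transversal, possibly vertical-tangent) crossing point itself. You should either adopt this argument or supply a genuinely independent proof that curves with $ \abs{\ka} < 1 $ admit no vertical tangents in $ M $ --- but the latter is essentially as hard as the lemma itself.
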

\begin{proof}
	The hypercircle of curvature $ \frac{1}{2}(\ka_1+\ka_2) $ tangent to $ u $ meets
	$ \bd \Hh^2 $ at two points; let $ R $  denote the open region whose closure
	contains the one point in $ \bd \Hh^2 $ towards which $ u $ is pointing. Let
	$ \ga \in \sr C_{\ka_1 }^{\ka_2 }(u,\cdot) $. Comparison of the curvatures
	of $ \ga $ and those of the hypercircles bounding $ R $ shows that $ \ga(t) \in R
	$ for sufficiently small $ t>0 $.  Suppose for a contradiction that $ \ga $
	reaches $ \bd R $ for the first time at $ t=T \in (0,1] $ at some point of the
	hypercircle $ E $ of curvature $ \ka_2 = \tanh r $. (In case $ \ka_2=1 $, $ E $
	is a horocycle and $ r=+\infty $.)  The function 
	\begin{equation*}
		\de\colon [0,T] \to \R ,\quad  \de(t)=d(\ga(t),E) 
	\end{equation*}
	vanishes at $ 0 $ and $ T $, hence it must attain its global maximum in $
	[0,T] $ at some $ \tau \in (0,T) $, say $ \de(\tau) = \rho $. By
	\rref{R:normal}, the normal translation of $ E $ by $ -\rho $ is
	a hypercircle (resp.~horocycle) of curvature $ \tanh (r+\rho) $, which
	must be tangent to $ \ga $ at $ \ga(\tau) $ because $ \dot \de(\tau) = 0 $.
	Since $ \de $ has a local maximum at $ \tau $, comparison of curvatures
	yields that $ \ka_{\ga}(\tau)\geq \tanh(r+\rho)>\ka_2 $, which is
	impossible.

	The last assertion of the lemma holds because constant-curvature curves
	in $ \sr C_{\ka_1}^{\ka_2}(u,\cdot) $ foliate $ R $. 
\end{proof}

\begin{urmk}\label{R:Rbar}
	With the notation of \lref{L:unattainable}, the image of any
	curve in $ \bar{\sr C}_{\ka_1}^{\ka_2}(u,\cdot) $ is contained in $ \bar
	R $ (see \dref{D:spaces} for the definition of $ \bar {\sr C} $). 
	To prove this, let $ \ga $ be a curve in this space. If $ \ka_\ga \equiv
	\ka_1 $ or $ \ka_\ga\equiv \ka_2 $, then the image of $ \ga $ is entirely
	contained in $ \bd R $. Otherwise, let $ t_0 < 1 $ be the infimum of all $ t
	\in [0,1] $ such that $ \ka_{\ga}(t) \in (\ka_1,\ka_2) $, and apply the argument
	of \lref{L:unattainable} to $ \ga|_{[t_0,1]} $. 
\end{urmk}

\begin{crl}\label{C:closed}
	The spaces $ \sr C_{\ka_1 }^{\ka_2 }(\cdot,\cdot) $ and $ \bar{\sr
	C}_{\ka_1}^{\ka_2}(\cdot,\cdot) $ contain closed curves if and only if at least
	one $ \abs{\ka_i}>1 $.\qed
\end{crl}

\begin{crl}\label{C:nontrivial}
Let $ S $ be a hyperbolic surface. Then any closed curve whose curvature is
bounded by 1 in absolute value is homotopically non-trivial (that is, it
cannot represent the unit element of $ \pi_1S $).
\end{crl}
\begin{proof}
Indeed, the previous corollary shows that the lift of such a curve to $
\Hh^2 $ cannot be closed.
\end{proof}

It will be convenient to introduce another model for $ \Hh^2 $, which bears some
similarity to Mercator world maps.

\begin{dfn}[Mercator model]\label{D:Mercator}
The underlying set of the \tdfn{Mercator model} $ M $ is $ (0,\pi) \times \R
$. Its metric is defined by declaring the correspondence
\begin{equation*}
	M\to H,\quad (x,y)\mapsto e^{y+ix} \qquad \big(x\in (0,\pi),~y\in \R) 
\end{equation*} 
to be an isometry. Because this map is the composition of the complex
exponential with a reflection in the line $ y=x $, $ M $ is conformal.
\end{dfn}

\begin{rmk}[geometry of $ M $]\label{R:compM} It is straightforward to verify
	that in the Mercator model $ M $:
\begin{enumerate}
	\item [(a)] Vertical lines $ y\mapsto (x,y) $, or
		{parallels}, represent hypercircles of curvature $ \cos x $,
		corresponding in $ H $ to rays having $ 0 \in H $ for their initial point
		(see \xref{R:orientation}\?(a)).  Horizontal segments, or
		{meridians}, are geodesics corresponding in $ H $ to Euclidean
		half-circles centered at 0. 
	\item [(b)] Vertical translations are hyperbolic isometries. The
		Riemannian metric $ g $ is given by
		\begin{equation*}
			g_{(x,y)} = \frac{dx^2+dy^2}{\sin^2 x} \qquad  ((x,y)\in M).
		\end{equation*}
	\item [(c)] The Christoffel symbols are given by
		\begin{equation*}
			\Ga_{ij}^{k}(x,y) = \begin{cases}
				0 & \text{ if $ i+j+k $ is odd;} \\
				(-1)^{1+ij}\cot x & \text{ if $ i+j+k $ is even.}
			\end{cases}
		\end{equation*}
\end{enumerate}
\end{rmk}

\begin{lem}\label{L:graph}
Let $ (\ka_1,\ka_2) \subs [-1,1] $  and $ u $ be the vector $ 1 \in \Ss^1 $
based at $ \big(\frac{\pi}{2},0\big) \in M $. Then the image of any curve
in $ \sr C_{\ka_1 }^{\ka_2 }(u,\cdot) $ or $ \bar{\sr
C}_{\ka_1}^{\ka_2}(u,\cdot) $ is the graph of a function of $ x $ when
represented in $ M $. Conversely, if $ (\ka_1,\ka_2) \not \subs [-1,1] $,
then there exist curves in these spaces which are not graphs.
\end{lem}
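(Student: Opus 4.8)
The plan is to argue entirely inside the Mercator model $ M $, exploiting that it is conformal, so that the angle $ \theta $ made by $ \ta_\ga $ with the horizontal (meridian) direction is an honest Euclidean angle in the $ (x,y) $-plane. The image of a curve is the graph of a function of $ x $ exactly when $ x $ is strictly monotone along it, i.e.\ when $ \theta $ never equals $ \pm\frac{\pi}{2} $ (equivalently $ \dot x\neq 0 $). Since $ \ga(0)=\big(\frac{\pi}{2},0\big) $ and $ \ta_\ga(0)=u $ is horizontal, $ \theta(0)=0 $. Parametrizing by hyperbolic arclength $ s $ and feeding the metric $ g_{(x,y)}=(dx^2+dy^2)/\sin^2 x $ of \rref{R:compM} into the curvature formula \eqref{E:curv}, I would first record the planar system
\[
	\dot x=\sin x\cos\theta,\qquad \dot\theta=\cos x\sin\theta-\ka_\ga,
\]
valid for curves in both $ \sr C_{\ka_1}^{\ka_2}(u,\cdot) $ and $ \bar{\sr C}_{\ka_1}^{\ka_2}(u,\cdot) $. (As a sanity check, an upward parallel $ x\equiv x_0 $ has $ \theta\equiv\frac{\pi}{2} $ and $ \dot\theta=0 $, which returns $ \ka=\cos x_0 $, as it must by \rref{R:compM}.)

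The crux is a barrier estimate in the $ (x,\theta) $-plane, and the one delicate point is the choice of barrier: the naive strip $ \abs{\theta}<\frac{\pi}{2} $ is \emph{not} invariant, because where $ \cos x<0 $ the term $ \cos x\sin\theta $ can drive $ \theta $ toward $ \pm\frac{\pi}{2} $. The remedy is to use the two horocycles of curvature $ \pm 1 $ through $ \big(\frac{\pi}{2},0\big) $ tangent to $ u $, namely $ y=\pm\log\sin x $; in the $ (x,\theta) $-plane these are precisely the lines $ \theta=\pm\big(x-\frac{\pi}{2}\big) $, meeting at the initial point. Setting $ P=\theta-x+\frac{\pi}{2} $ and $ Q=\theta+x-\frac{\pi}{2} $, a one-line substitution into the system gives $ \dot P=-1-\ka_\ga $ on $ \{P=0\} $ and $ \dot Q=1-\ka_\ga $ on $ \{Q=0\} $. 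Here, and only here, the hypothesis $ (\ka_1,\ka_2)\subs[-1,1] $ enters, through $ \abs{\ka_\ga}\le 1 $: it yields $ \dot P\le 0 $ on $ \{P=0\} $ and $ \dot Q\ge 0 $ on $ \{Q=0\} $. Hence the vector field points into the wedge $ W=\{\,\frac{\pi}{2}-x\le\theta\le x-\frac{\pi}{2}\,\} $ along its two edges, so $ W $ is positively invariant; since the curve starts at the vertex of $ W $, it stays in $ W $ for all $ s $. As $ x\in(0,\pi) $, membership in $ W $ forces $ -\frac{\pi}{2}<\frac{\pi}{2}-x\le\theta\le x-\frac{\pi}{2}<\frac{\pi}{2} $, so $ \dot x>0 $ throughout and the image is a graph, proving the first assertion.

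For the converse, suppose $ (\ka_1,\ka_2)\not\subs[-1,1] $, so that $ \ka_1<-1 $ or $ \ka_2>1 $; in either case there is a value $ c $ with $ \abs{c}>1 $ lying in $ (\ka_1,\ka_2) $. By \rref{R:constant} a curve of constant curvature $ c $ is a circle, hence a closed curve. Realizing the full circle of curvature $ c $ tangent to $ u $ at $ \big(\frac{\pi}{2},0\big) $ as a map on $ [0,1] $ produces an element of $ \sr C_{\ka_1}^{\ka_2}(u,\cdot) $ (and of $ \bar{\sr C}_{\ka_1}^{\ka_2}(u,\cdot) $) whose image is a compact loop; such a loop meets some vertical line of $ M $ twice and so is not the graph of a function of $ x $.

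The main obstacle, then, is not the final comparison but identifying the correct invariant region: recognizing that the two extremal horocycles $ y=\pm\log\sin x $ straighten to lines in the $ (x,\theta) $ coordinates and fence in exactly the wedge on which $ \abs{\ka_\ga}\le 1 $ forces the flow inward. A secondary nuisance is the orientation bookkeeping needed to pin down the signs in the system above, since the Mercator chart reverses orientation.
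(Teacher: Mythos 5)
Your proof is correct, but it follows a genuinely different route from the paper's. The paper argues softly: if a curve in $ \bar{\sr C}_{\ka_1}^{\ka_2}(u,\cdot) $ were tangent to some parallel, one reflects the arc in the meridian through the tangency point and then in the parallel $ x=\frac{\pi}{2} $ (reversing orientation each time, so curvature values are preserved), producing a closed curve; this contradicts \cref{C:closed}, which in turn rests on the confinement lemma \lref{L:unattainable}. The converse --- circles are closed, hence not graphs --- is identical in both arguments. You instead run a self-contained phase-plane analysis in $ M $: your system $ \dot x=\sin x\cos\theta $, $ \dot\theta=\cos x\sin\theta-\ka_\ga $ is correct for the convention your sanity check pins down via \rref{R:compM}(a), the horocycles $ y=\pm\log\sin x $ do become the lines $ \theta=\pm\big(x-\frac{\pi}{2}\big) $, and the edge computations $ \dot P=-1-\ka_\ga $ on $ \{P=0\} $, $ \dot Q=1-\ka_\ga $ on $ \{Q=0\} $ check out. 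What your route buys: it is independent of \lref{L:unattainable} and \cref{C:closed}, and it yields the quantitative conclusion $ \abs{\theta}\leq x-\frac{\pi}{2} $, i.e.\ the curve is trapped between the two horocycles through $ u $ --- in effect a re-proof of the relevant case of \lref{L:unattainable}. What the paper's route buys: brevity, no computation, and no model-dependent bookkeeping.

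One step deserves more care. Positive invariance of $ W $ does not follow formally from the weak inequalities $ \dot P\leq 0 $ on $ \{P=0\} $ and $ \dot Q\geq 0 $ on $ \{Q=0\} $ by a ``points strictly inward'' argument, since equality can occur: for $ \ka_\ga\equiv-1 $ the trajectory slides along the edge $ \{P=0\} $. You need either Nagumo's subtangency criterion for Lipschitz fields, or --- simpler, and entirely within your own computation --- the observation that your substitution decouples globally, not just on the edges: $ \dot P=\sin(\theta-x)-\ka_\ga=-\cos P-\ka_\ga $ and $ \dot Q=\sin(\theta+x)-\ka_\ga=\cos Q-\ka_\ga $ everywhere. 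Then $ \dot P\leq 1-\cos P $ with $ P(0)=0 $, and scalar comparison with the unique solution $ z\equiv 0 $ of $ \dot z=1-\cos z $, $ z(0)=0 $, gives $ P\leq 0 $; symmetrically $ Q\geq 0 $. With that line added the first assertion is complete. Finally, the orientation bookkeeping you flag as a nuisance is harmless for this particular lemma: replacing $ \ka_\ga $ by $ -\ka_\ga $ (the effect of the opposite sign convention for curvature in $ M $) preserves the hypothesis $ (\ka_1,\ka_2)\subs[-1,1] $ and merely interchanges the roles of $ P $ and $ Q $, so the same wedge argument runs verbatim.
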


\begin{proof}
Suppose that $ \ga \in \bar{\sr C}_{\ka_1}^{\ka_2}(u,\cdot) $ is not the
graph of function of $ x $ when represented in $ M $, i.e., it is tangent to
a parallel $ P_1 $ at some time, say, $ t=1$.  Let $ P_0 $ be the parallel $
x=\frac{\pi}{2} $, which is orthogonal to $ \ga $ at $ t=0 $ by hypothesis; note
that $ P_0 $ is a geodesic. Let $ L $ be the meridian through $ \ga(1) $, which
is a geodesic orthogonal to both $ P_0 $ and $ P_1 $.  Let $ \ga_1 $ be the
concatenation of $ \ga $ and its reflection in $ L $ (with reversed
orientation). Let $ \ga_2 $ be the concatenation of $ \ga_1 $ and its reflection
in $ P_0 $ (again with reversed orientation).  Then $ \ga_2 $ is a closed curve,
hence $ (\ka_1,\ka_2) \not\subs [-1,1] $ by \cref{C:closed}. 

Conversely, if $ (\ka_1,\ka_2) \not\subs [-1,1] $, then $ \sr
C_{\ka_1}^{\ka_2}(u,\cdot) $ contains circles, which are closed and hence
not graphs.  
\end{proof}

\begin{prp}[attainable turnings]\label{P:unattainable}
Consider the decomposition \eqref{E:decomp} of 
$ \sr C_{\ka_1 }^{\ka_2 }(u,v) $ into canonical subspaces.
\begin{enumerate}
	\item [(a)] If $ (\ka_1,\ka_2) $ contains $ [-1,1] $, then all of its 
		canonical subspaces are nonempty.
	\item [(b)] If $ (\ka_1,\ka_2) $ is contained in $ [-1,1] $, then at
		most one canonical subspace is nonempty.
	\item [(c)] If $ (\ka_1,\ka_2) $ overlaps or is disjoint from $ [-1,1]
		$, then infinitely many of the canonical subspaces are nonempty, and
		infinitely many are empty. 
\end{enumerate}
\end{prp}
\begin{proof}
We split the proof into parts.
\begin{enumerate}
	\item [(a)] By \lref{L:attainable}, $ \sr C_{\ka_1 }^{\ka_2 }(u,v) \neq
		\emptyset $. Since $ (\ka_1,\ka_2) \sups [-1,1] $, we may concatenate
		a curve in this space with circles of positive or negative
		curvature, traversed multiple times, to attain any desired total
		turning.
	\item [(b)] By \pref{P:reduction}, we may assume that $ u $ is the
		vector in the statement of \lref{L:graph}. Then the assertion 
		becomes an immediate consequence of the latter.
	\item [(c)] By \lref{L:attainable}, $ \sr C_{\ka_1}^{\ka_2}(u,v) \neq
		\emptyset $. Because we may concatenate any curve in $ \sr C_{\ka_1
		}^{\ka_2 }(u,v) $ with a circle (of positive curvature if $ \ka_1 >
		-1 $ and of negative curvature if $ \ka_2 < 1 $) traversed multiple
		times, $ \sr C_{\ka_1 }^{\ka_2 }(u,v;\tau) \neq \emptyset $ for
		infinitely many values of $ \tau $. The remaining
		assertion, that $ \sr C_{\ka_1 }^{\ka_2 }(u,v;\tau) = \emptyset $ for
		infinitely many values of $ \tau $, is a consequence of
		\lref{L:turnings}\?(c) below.\qedhere
\end{enumerate}
\end{proof}

\begin{dfn}[$ \al_{\pm} $]\label{D:arcs}
	Given a regular curve $ \ga \colon [0,1] \to \Hh^2 $, define maps 
	\begin{equation*}
		\al_{\pm} \colon [0,1] \to \Ss^1_\infty 
	\end{equation*}
	by letting $ \al_{\pm}(t) $ be the point where the geodesic ray emanating
	from $ \pm \no(t) $ meets $ \Ss^1_\infty $.  
\end{dfn}

\begin{lem}\label{L:turnings}
	Let $ u,\,v\in UT\Hh^2 $ be fixed.
	\begin{enumerate}
		\item [(a)] Two curves $ \ga,\,\bar\ga \in \sr
			C_{-\infty}^{+\infty}(u,v) $ lie in the same component of this space
			if and only if the associated maps $ \al_{+},\,\bar\al_{+} \colon
			[0,1] \to \Ss^1_\infty $ defined in \dref{D:arcs} have the same
			total turning. 
		\item [(b)] If $ \ka_\ga > -1 $ everywhere, then $ \al_- $ is monotone.
			Similarly, if $ \ka_\ga<+1 $, then $ \al_{+} $ is monotone.
		\item [(c)] If $ \ka_1 \geq -1 $, then then there exists $ \tau_0 $ such
			that $ \sr C_{\ka_1 }^{\ka_2 }(u,v;\tau) $ is empty for all $ \tau
			<\tau_0 $.
	\end{enumerate}
\end{lem}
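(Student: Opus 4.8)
The plan is to reduce everything to a single geometric map from the unit tangent bundle to the circle at infinity, together with a sign computation for osculating constant-curvature curves. For $ w \in UT\Hh^2 $ and a sign $ \pm $, let $ \Phi_{\pm}(w) \in \Ss^1_\infty $ be the point where the geodesic ray emanating from $ \pm Jw $ meets $ \bd \Hh^2 $, so that $ \al_{\pm} = \Phi_{\pm}\circ \ta_\ga $ for every regular curve $ \ga $. I would first record the elementary fact that for each fixed $ p \in \Hh^2 $ the restriction of $ \Phi_{\pm} $ to the fiber $ UT\Hh^2_p \cong \Ss^1 $ is an orientation-preserving diffeomorphism onto $ \Ss^1_\infty $: rotating a direction once about $ p $ rotates the associated ideal endpoint once around $ \bd \Hh^2 $. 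Consequently $ \Phi_{\pm} $ has fiber-degree $ 1 $, so it admits a lift $ \wt\Phi_{\pm}\colon \wt{UT\Hh^2} \to \R $ to the universal covers which intertwines the deck action of $ \pi_1(UT\Hh^2)=\mathbb Z $ with translation by $ 2\pi $.

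From this I would extract the following relation, valid for \emph{every} $ \ga \in \sr C_{-\infty}^{+\infty}(u,v) $: the total turning of $ \al_{\pm} $ equals $ \tau + c_{\pm} $, where $ \tau $ is the total turning of $ \ga $ and $ c_{\pm} $ depends only on $ u $ and $ v $. Indeed, fixing the lift $ \te{u} $ of $ u $, the map $ \wt\Phi_{\pm}\circ\te{\ta} $ is a lift of $ \al_{\pm} $, so the total turning of $ \al_{\pm} $ equals $ \wt\Phi_{\pm}(\te{\ta}(1)) - \wt\Phi_{\pm}(\te{u}) $; here $ \te{\ta}(1) \in \pr^{-1}(v) $ is precisely the lift singled out by $ \tau $ (this is the correspondence underlying the decomposition \eqref{E:decomp}), and the $ 2\pi $-equivariance of $ \wt\Phi_{\pm} $ forces this expression to be affine of slope $ 1 $ in $ \tau $. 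Part (a) is then immediate: by the case $ (\ka_1,\ka_2)=(-\infty,+\infty) $ recalled in the Introduction, two curves lie in the same component of $ \sr C_{-\infty}^{+\infty}(u,v) $ if and only if they have the same $ \tau $; since $ \tau \mapsto \tau + c_{+} $ is a bijection, this holds if and only if $ \al_{+} $ and $ \bar\al_{+} $ have the same total turning.

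For part (b) the plan is to reduce monotonicity to a sign, and the sign to constant-curvature curves. Writing $ \al_{\pm} = \Phi_{\pm}\circ \ta $, one has $ \dot\al_{\pm}(t_0) = d\Phi_{\pm}\big(\dot\ta(t_0)\big) $, and the velocity $ \dot\ta(t_0) \in T(UT\Hh^2) $ is, up to the positive factor $ \abs{\dot\ga(t_0)} $, determined by $ \ka(t_0) $ alone: its horizontal part is $ \ta(t_0) $ and its vertical (fiber) part is $ \ka(t_0) $ times the generator of the fiber. Hence the sign of $ \dot\al_{\pm}(t_0) $ depends only on $ \ga(t_0),\,\ta(t_0) $ and $ \ka(t_0) $, and therefore agrees with the corresponding sign for the constant-curvature curve osculating $ \ga $ at $ \ga(t_0) $ (cf.~\rref{R:osculating}). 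It then remains to place each such curve---circle, horocycle or hypercircle---in standard position and compute; the upshot, which I expect to be the main obstacle of the whole lemma, is that
\begin{equation*}
	\operatorname{sign}\dot\al_{+}(t_0) = \operatorname{sign}\big(\ka(t_0)-1\big),
	\qquad
	\operatorname{sign}\dot\al_{-}(t_0) = \operatorname{sign}\big(\ka(t_0)+1\big),
\end{equation*}
each vanishing exactly when the osculating curve is the horocycle whose ideal centre is $ \pm\al_{\pm}(t_0) $, so that $ \al_{\pm} $ is momentarily stationary. Granting this, $ \ka_\ga < 1 $ throughout gives $ \dot\al_{+}<0 $ throughout, so $ \al_{+} $ is strictly monotone, and $ \ka_\ga > -1 $ gives $ \dot\al_{-}>0 $, so $ \al_{-} $ is monotone, as claimed.

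Finally, part (c) combines the two previous ingredients. If $ \ka_1 \geq -1 $ then every $ \ga \in \sr C_{\ka_1}^{\ka_2}(u,v) $ satisfies $ \ka_\ga > -1 $, so by part (b) its map $ \al_{-} $ is strictly monotone increasing. A strictly monotone map $ [0,1]\to \Ss^1_\infty $ with the fixed endpoints $ \al_{-}(0)=\Phi_{-}(u) $ and $ \al_{-}(1)=\Phi_{-}(v) $ has total turning bounded below by the least nonnegative representative $ \de_0 $ of the angular gap between these two points. By the relation established above, the total turning of $ \al_{-} $ equals $ \tau + c_{-} $; hence $ \tau \geq \de_0 - c_{-} =: \tau_0 $ for every such $ \ga $. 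Equivalently, $ \sr C_{\ka_1}^{\ka_2}(u,v;\tau)=\emptyset $ whenever $ \tau < \tau_0 $, which is the assertion of part (c).
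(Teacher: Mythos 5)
Your proposal is correct, and its overall skeleton matches the paper's: part (b) is handled in both by reducing the sign of $\dot\al_{\pm}$ to the osculating constant-curvature curve placed in standard position, and your justification of that reduction (the velocity $\dot\ta(t_0)$ is determined, up to a positive factor, by the $2$-jet data $\ga(t_0),\ta(t_0),\ka(t_0)$, which the osculating curve shares) is in fact more explicit than the paper's one-line ``it suffices to approximate $\ga$ by its osculating constant-curvature curve at each point''; your sign formulas $\operatorname{sign}\dot\al_{+}=\operatorname{sign}(\ka-1)$ and $\operatorname{sign}\dot\al_{-}=\operatorname{sign}(\ka+1)$ are the correct ones under the paper's orientation conventions. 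Where you genuinely diverge is in the winding bookkeeping behind (a) and (c). The paper proves the converse of (a) by invoking the concatenation structure of $\pi_0$: two curves in different components differ, up to regular homotopy, by a circle traversed $n\neq 0$ times, so the turnings of $\al_{+}$ differ by $2\pi n$. You instead package $\al_{\pm}$ as $\Phi_{\pm}\circ\ta_\ga$ for a fixed fiber-degree-one map $\Phi_{\pm}\colon UT\Hh^2\to\Ss^1_\infty$, lift it equivariantly to universal covers, and extract the affine relation (turning of $\al_{\pm}$) $=\tau+c_{\pm}(u,v)$ valid for every regular curve; both routes rest on the same background fact quoted from the Introduction (components of $\sr C_{-\infty}^{+\infty}(u,v)$ correspond to the valid total turnings $\tau$). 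What your formulation buys is part (c): the paper dismisses it as ``a corollary of (a) and (b)'' with no argument, whereas your affine relation, combined with the lower bound on the turning of a monotone circle-valued map, is precisely the missing deduction --- and it is important that the slope there is $+1$ (which your equivariance argument gives), since a slope of $-1$ would yield an upper rather than a lower bound on $\tau$.
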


\begin{proof}
	It is clear that if $ \ga,\,\bar\ga $ lie in the same component, then $
	\al_{+} \iso \bar\al_{+} $ and $ \al_{-} \iso \bar\al_{-} $. Conversely, if
	$ \ga,\,\bar\ga $ do not lie in the same component, then $ \bar\ga $ must be
	homotopic (through regular curves) to the concatenation of $ \ga $ with a
	circle traversed $ n $ times, for some $ n\neq 0 $. This yields a homotopy
	between $ \bar\al_+ $ and $ \al_+ $ concatenated with a map of degree $ n $.
	
	For part (b), it suffices to approximate $ \ga $ by its osculating
	constant-curvature curve at each point. If such a curve is a circle, place
	its center at the origin in the disk model. If it is a hypercircle, regard it as
	an Euclidean ray in $ H $. 

	Finally, (c) is a corollary of (a) and (b).
\end{proof}

\begin{exr}\label{X:arcs} Let $ \ga \colon [0,1] \to \Hh^2 $ be a regular curve
	and $ A_{\pm}(\ga) \subs \Ss^1_\infty $ denote the images of $ \al_{\pm}
	\colon [0,1] \to \Ss^1_\infty $.  
	\begin{enumerate}
		\item [(a)] Each of $ A_{\pm}(\ga) $ is a closed arc, which may
			be a singleton or all of $ \Ss^1_\infty $. 
		\item [(b)] Regard $ \ga $ as a curve in $ H $, and let 
			$ \infty $ be the unique point of $ \Ss^1_\infty $ not
			on the real line in this model. Then $ \infty $ lies in the
			complement of $ A_{+}(\ga) \cup A_{-}(\ga)	$ if and only if $
			\ta_\ga $ is never horizontal. 
		\item [(c)] Let $ \eta $ be a horocycle of curvature $ 1 $,
			tangent to $ \Ss^1_\infty $ at $ z $. Then $ A_+(\eta) = \se{z} $
			while $ A_-(\eta) = \Ss^1_\infty \ssm \se{z} $. 
			What happens if the curvature of $ \eta $ is $ -1 $? (\tit{Hint}:
			reduce to the case where $ z = \infty $.)
	\end{enumerate}
\end{exr}


\section{Frame and logarithmic derivative}\label{S:frame}

The group $ \Iso_+(\Hh^2) $ of all orientation-preserving isometries of the
hyperbolic plane acts simply transitively on $ UT\Hh^2 $. Therefore, an element
$ g$ of this group is uniquely determined by where it maps a fixed unit tangent
vector $ u_0 $. This yields a correspondence between the two sets, viz., $ g\dar
gu_0 $.  The \tdfn{frame} 
\begin{equation}\label{E:frame}
	\Phi=\Phi_\ga\colon [0,1]\to \Iso_+(\Hh^2) 
\end{equation}
of a regular curve $ \ga\colon [0,1]\to \Hh^2 $ is the image of $ \ta_\ga $
under this correspondence, and the \tdfn{logarithmic derivative} $
\La=\La_\ga\colon [0,1]\to \textrm{L}(\Iso(\Hh^2)) $ is its infinitesimal
version. More precisely, $ \La $ is the translation of $ \dot\Phi $ to the Lie
algebra $ \textrm{L}(\Iso(\Hh^2)) $, defined by
\begin{equation}\label{E:La}
	\dot\Phi(t) = TL^{\Phi(t)} (\La(t)) \quad (t\in [0,1]).
\end{equation}
Here $ TL^{\Phi(t)} $ denotes the derivative (at the identity) of left
multiplication by $ \Phi(t) $. Although $ \Phi $ depends on the choice
of $ u_0 $, $ \La $ does not.

To be explicit, let $ S=\diag(-1,1,1) $, 
\begin{equation*}
	\Oo_{2,1} = \set{ Q \in \GL_3(\R)}{Q^tSQ=S}
\end{equation*}
be the group of isometries of $ \E^{2,1} $ and 
\begin{equation*}
	\SO^+_{2,1} = \set{Q\in \Oo_{2,1} }{\det(Q)=1\text{ and }Q(L)=L} 
\end{equation*}
be the connected component of the identity, which is isomorphic to $
\Iso_+(\Hh^2) $.  The corresponding Lie algebra is
\begin{equation*}
	\aso_{2,1} = \set{X\in \agl_3(\R)}{X^tS+SX=0}.
\end{equation*}
In the hyperboloid model $ L $, the identification between $
UT\Hh^2 $ and $ \Iso_+(\Hh^2) $ takes the canonical form 
\begin{equation}\label{E:corresp}
	UTL_p\ni u \dar
	\begin{pmatrix}
		| & | & | \\
		p & u & p \ten u \\
		| & | & |
	\end{pmatrix}	\in \SO^+_{2,1},
\end{equation}
where $ \ten $ denotes the Lorentzian vector product in $ \E^{2,1}
$. (Recall that $ u \ten v = S(u \times v)$, where $ \times $ denotes the usual
vector product of vectors in $ \R^3 $.) Note that this correspondence is also of
the form $ gu_0 \dar g $ described above ($ g \in \SO^+_{2,1} $), for $ u_0 =
e_1 = (0,1,0) \in UTL $.

\begin{exr}[computations in $ L $]\label{X:loid}
	Let $ \ga\colon [0,1]\to L $ be smooth and regular. 
	\begin{enumerate}
		\item [(a)] Denoting differentiation with respect to the given parameter
			(resp.~arc-length) by $ \dot{} $ (resp.~$ ' $):
			\begin{equation*}
				\ta' = \ka\no+\ga,\quad \no'=-\ka\ta\quad \text{and} \quad
				\ka = \ta'\cdot \no = \frac{1}{\norm{\dot\ga}}\dot\ta\cdot
				\no = \frac{1}{\norm{\dot\ga}^2}\ddot\ga\cdot\no =
				\frac{1}{\norm{\dot\ga}^3}\det(\ga,\dot
				\ga,\ddot \ga).
			\end{equation*}
			In these formulas $ \ga,\,\ta,\,\no $ are viewed as taking values in
			$ \E^{2,1} $, $ \cdot $ is the Lorentzian inner product and $
			\norm{\ }^2 $ the corresponding quadratic form.
		\item [(b)] The frame $ \Phi\colon [0,1]\to \SO^+_{2,1} $ and 
			logarithmic derivative $ \La\colon [0,1]\to \aso_{2,1} $ of $ \ga $
			are given by
			\begin{equation*}
				\Phi=\begin{pmatrix}
					| & | & | \\
					\ga & \ta & \no \\
					| & | & |
				\end{pmatrix}
				\text{\quad and\quad}
				\La = \norm{\dot\ga}\begin{pmatrix}
					0 & 1 & 0 \\
					1 & 0 & -\ka \\
					0 & \ka & 0
				\end{pmatrix}.
			\end{equation*}
	\end{enumerate}
\end{exr}

Similar formulas for the curvature in the disk and half-plane models are
much more cumbersome. However, the frame and logarithmic derivative 
do admit comparatively simple expressions. 

For concreteness, we choose the correspondence between $ \PSL_2(\R)	= \Iso_+(H)
$ and $ UTH $ to be $ M \dar dM_i(1) $, where the latter denotes the image
under the complex derivative $ dM $ of the tangent vector $ 1 \in \C $ based at
$ i\in H $. Similarly, we choose the correspondence between $ \Iso_+(D) $ and $
UTD $ to be  $ M \dar dM_0(\frac{1}{2}) $. Recall that $ \Iso_+(D) $ consists of
those M\"obius transformations of the form
\begin{equation*}
	\qquad z\mapsto \frac{az+b}{\bar bz+\bar a},\quad \abs{a}^2-\abs{b}^2>0
	\quad (a,\,b \in \C).
\end{equation*}

\begin{rmk}[$ \Phi $ and $\La $ in the models $ D $ and $ H $]\label{R:phila}
	Denote elements of projective groups as matrices in square brackets, the
	absolute value of a complex number by $ \abs{\ } $ and, exceptionally,
	the norm of a vector tangent to $ \Hh^2 $ by $ \norm{\ } $. 
	\begin{enumerate} 
		\item [(a)] Let $ \ga\colon [0,1] \to H $ be a smooth regular curve.
			Then $ \La \colon [0,1]\to \asl_2({\R}) $ is given by
			\begin{equation*}
				\La=\frac{1}{2}\norm{\dot\ga}
				\begin{pmatrix}
					0 & 1+\ka \\
					1-\ka & 0 
				\end{pmatrix}
			\end{equation*}
			and $ \Phi \colon [0,1]\to \PSL_2(\R) $ is given by 
			\begin{equation*}
				\Phi=
				\begin{bmatrix}
					\Im(\ga \bar z) & \Re(\ga \bar z)\\
					\Im(\bar z) & \Re(\bar z)
				\end{bmatrix},\quad
				\text{where $ \frac{\ta}{\abs{\ta}} = z^2\in \Ss^1 $.}
			\end{equation*}
		\item [(b)] Let $ \ga\colon [0,1] \to D $ be a smooth regular curve.
			Then $ \La\colon [0,1] \to \asl_2(\C) $ is given by 
			\begin{equation*}
				\La=\frac{1}{2}\norm{\dot\ga}\begin{pmatrix}
					i\ka & 1 \\
					1 & -i\ka 
				\end{pmatrix}
			\end{equation*}
			and $ \Phi\colon [0,1] \to \Iso_+(D)\subs\PSL_2(\C) $ is given by
			\begin{equation*}
				\Phi=
				\begin{bmatrix}
					\phantom{\ga}z & \ga \bar z\\
					\bar\ga z & \phantom{\ga}\bar z
				\end{bmatrix},\quad 
				\text{where $ \frac{\ta}{\abs{\ta}}=z^2\in \Ss^1 $.}
			\end{equation*}
	\end{enumerate}
	To establish the first formula, it suffices by \eqref{E:La} to consider the
	case the parameter is arc-length and $ \Phi(s_0) = I\in \PSL_2(\R) $.
	Without trying to find an expression for $ \Phi(s) $ itself, write
	\begin{equation*}
		M^s:=\Phi(s)\in \PSL_2(\R),\quad
		M^s(z)=\frac{a(s)z+b(s)}{c(s)z+d(s)}, \quad M^{s}(i)=\ga(s)\in H, \quad
		dM^{s}_i=\ta(s)\in \C.  
	\end{equation*} 
	(Here $ dM^s $ denotes the complex derivative of $ M^s $ as a map $ \C\to \C
	$.) Differentiate with respect to $ s $ at $ s_0 $ and express $ \ta'(s_0) $
	in terms of $ \frac{D\ta}{ds}(s_0) $ to determine $ a',b',c',d' $ at $ s_0
	$. The derivation of the formula for $ \La $ in (b) is analogous, and the
	expressions for $ \Phi $ are obtained by straightforward computations.
\end{rmk}

A one-parameter group of hyperbolic isometries provides a foliation of $
\Hh^2 $ by its orbits, which are hypercircles by \rref{R:constant}\?(a). A
regular curve $ \ga \colon [0,1]\to \Hh^2$ \tdfn{admits hyperbolic grafting} if
there exist some such group $ G $ and $ t_0,t_1\in [0,1] $ such that $
\ta(t_0) $ and $ -\ta(t_1) $ are tangent to orbits of $ G $.

\begin{rmk}[hyperbolic grafting]\label{R:grafting}
	A regular curve $ \ga\colon [0,1]\to H $ admits hyperbolic grafting if and
	only if there exist $ t_0,t_1\in [0,1] $ such that 
	$ \Phi(t_1)\Phi(t_0)^{-1} $ has the form
	\begin{equation*}
		\begin{bmatrix}
			r(1-\sin2\theta) & -r\cos2\theta \\
			\cos2\theta & -(1-\sin2\theta) 
		\end{bmatrix}= 
		\begin{bmatrix}
			r(\cos\theta-\sin\theta) & -r(\cos\theta+\sin\theta) \\
			\cos\theta+\sin\theta & \sin\theta-\cos\theta 
		\end{bmatrix}\in \PSL_2(\R)
	\end{equation*}
	for some $ r>0 $ and $ \theta\in (0,\frac{\pi}{2})$. 

	For the proof, note that any two one-parameter groups of hyperbolic
	isometries are conjugate, hence $ \Ga $ may be taken as the group of
	positive homotheties centered at $ 0 $.  By homogeneity, it may also be
	assumed that $ \ta(t_0) = i\in \C $ based at $ i\in H $. Then $ -\ta(t_1) =
	\Im(z)z $ if it is based at $ z \in H $ and tangent to an orbit of $ \Ga $.
	The M\"obius transformations $ M\in \PSL_2(\R) $ satisfying $
	dM_i(i)=-\Im(z)z $ ($ z=re^{2i\theta}\in H $) admit the stated description. 
\end{rmk}


\section{The case where $ (\ka_1,\ka_2) $ is disjoint from $ [-1,1] $}
\label{S:disjoint}

\begin{lem}\label{L:convex}
	In the disk and half-plane models, a curve whose (hyperbolic) curvature is
	everywhere greater than $ 1 $ is locally convex from the Euclidean
	viewpoint, i.e., its total turning is strictly increasing. 
\end{lem}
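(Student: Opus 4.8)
The plan is to reduce the statement about the whole curve $ \ga $ to a pointwise comparison with its osculating circles, and then to read off the sign of the Euclidean curvature from the orientation conventions already established for the models. The guiding principle is that the Euclidean curvature of $ \ga $ at a point coincides with that of the constant-curvature curve osculating it there, while the latter appears as a genuine Euclidean circle in $ D $ and $ H $.

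First I would fix $ t\in[0,1] $ and consider the unique constant-curvature curve $ \eta $ osculating $ \ga $ at $ \ga(t) $, whose existence is guaranteed by \lref{L:normal2}. Since $ \ka_\ga(t)>1 $, \rref{R:constant}\?(d) and \rref{R:constant}\?(g) show that $ \eta $ is a circle, and \rref{R:constant}\?(b) shows that in either model its image is a Euclidean circle. By \dref{D:osculating}, after a reparametrization of $ \ga $ fixing $ t $ the two curves share the same first and second derivatives at the point in question; as the Euclidean curvature of a plane curve is a function of $ \dot{} $ and $ \ddot{} $ alone, and is invariant under reparametrization, the Euclidean curvatures of $ \ga $ and of the Euclidean circle $ \eta $ agree at $ \ga(t) $.

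It then remains to pin down the sign of this common value. Because $ \eta $ carries hyperbolic curvature $ \ka_\ga(t)>1 $, \rref{R:orientation}\?(b) shows that $ \eta $ is oriented counter-clockwise in $ \C $, and a counter-clockwise Euclidean circle has strictly positive Euclidean curvature relative to the standard orientation of $ \C $ fixed for both models. Hence the Euclidean curvature of $ \ga $ is strictly positive at every $ t $, so the Euclidean argument of $ \ta_\ga $ is strictly increasing and $ \ga $ is locally convex, as claimed.

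The only genuinely delicate point is the last one: the argument must deliver a \emph{definite} sign, not merely the nonvanishing of the Euclidean curvature. This is exactly what \rref{R:orientation}\?(b) supplies, and it explains why the hypothesis reads $ \ka_\ga>1 $ rather than $ \abs{\ka_\ga}>1 $, since circles of hyperbolic curvature less than $ -1 $ would instead turn clockwise and give strictly decreasing total turning.
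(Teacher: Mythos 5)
Your proposal is correct and follows essentially the same route as the paper: reduce to the osculating constant-curvature curve at each point (which, for curvature $>1$, is a circle appearing as a counterclockwise Euclidean circle in $D$ or $H$ by \rref{R:orientation}\?(b)), and conclude that the Euclidean curvature of the curve is strictly positive. Your write-up merely makes explicit the step the paper leaves implicit — that osculation, being a condition on first and second derivatives, forces the Euclidean curvatures of the curve and its osculating circle to agree.
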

\begin{proof}
	It suffices to prove this for curves of constant curvature greater than 1,
	because a general curve satisfying the hypothesis is osculated by curves of
	this type.  In $ D $ or $ H $, such curves are represented as Euclidean
	circles traversed in the counterclockwise direction, hence they are locally
	convex.
\end{proof}

\begin{thm}\label{T:disjoint}
	If $ (\ka_1,\ka_2) $ is disjoint from $ [-1,1] $, then each canonical
	subspace of $ \sr C_{\ka_1}^{\ka_2}(u,v) $ is either empty or contractible.
\end{thm}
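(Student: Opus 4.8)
The plan is to prove contractibility of each nonempty canonical subspace by exhibiting it as homeomorphic to a convex subset of a suitable function space, using the half-plane model $H$ throughout. By \pref{P:reduction} I may assume $(\ka_1,\ka_2) = (\ka_0, +\infty)$ with $\ka_0 \geq 1$, so every curve in the space has curvature exceeding $1$ everywhere. The key structural fact is \lref{L:convex}: such a curve, viewed in $H$, is locally convex from the Euclidean standpoint, so its Euclidean total turning is strictly increasing. This suggests reparametrizing each curve $\ga$ by the \emph{argument} $\theta$ of its (Euclidean) unit tangent vector, which is a legitimate global parameter precisely because $\theta$ is strictly monotone. The effect is to turn each curve into a graph over a $\theta$-interval, and the canonical subspace is singled out by fixing the initial and terminal values of $\theta$ (the total turning $\tau$ being prescribed).

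First I would set up this $\theta$-reparametrization carefully, translating the hyperbolic curvature bound $\ka_\ga > \ka_0$ and the endpoint conditions $\ta_\ga(0)=u$, $\ta_\ga(1)=v$ into conditions on the data describing the curve as a function of $\theta$. Writing $\ga$ in $H$ and expressing its position and speed in terms of $\theta$, the hyperbolic curvature being bounded below by $\ka_0 > 1$ should become a pointwise differential inequality on the derivatives of the coordinate functions with respect to $\theta$ — the crucial point being that the admissible region it cuts out in the relevant jet space is \emph{convex}. The frame/logarithmic-derivative formalism of \S\ref{S:frame}, in particular the expression for $\La$ in $H$ from \rref{R:phila}\?(a), is likely the cleanest bookkeeping device here, since it encodes $\|\dot\ga\|$ and $\ka$ simultaneously and converts endpoint conditions on $u,v$ into conditions on $\Phi$.

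Once the space is identified with a set of functions (on a fixed parameter interval $[\theta_0,\theta_1]$ determined by $\tau$) satisfying a pointwise convex constraint together with two linear boundary conditions, the contraction is obtained by straight-line Euclidean convex combination: interpolating between an arbitrary element and a fixed basepoint curve. I would check that the convex combination stays in the admissible set (convexity of the constraint), respects the boundary conditions (they are affine in the data), and yields again a genuine regular curve with curvature in the prescribed range after reconstructing $\ga$ from the function data. The homeomorphism between the original $C^r$-topologized space and the function space must be verified to be continuous in both directions so that the contraction transports back.

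The main obstacle I anticipate is twofold. First, verifying that the translated constraint really is convex in the chosen coordinates is delicate: the hyperbolic curvature is a nonlinear function of the Euclidean data in $H$, so the choice of which functions to take convex combinations of matters greatly, and a naive choice may fail convexity. Getting the right ``linearizing'' variables — most plausibly something adapted to the $\PSL_2(\R)$-structure rather than raw coordinates — is where the argument lives or dies, which is why the authors remark that the proof ``seems to be highly dependent on this particular model.'' Second, there is a compatibility issue at the endpoints: fixing $\ta_\ga(0)=u$ and $\ta_\ga(1)=v$ while reparametrizing by $\theta$ requires that the endpoint \emph{positions} also be handled, since a priori convex combinations of the tangent-direction data need not produce curves with matching endpoints; I expect this to be resolved by incorporating the position as an integral of the velocity data and showing those integral constraints are themselves affine, hence preserved under interpolation. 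Reconciling the strict inequality $\ka_\ga > \ka_0$ (an open condition) with the need for the basepoint curve and all interpolants to satisfy it strictly is a routine but necessary final check.
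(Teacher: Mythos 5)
Your overall strategy coincides with the paper's: reduce via \pref{P:reduction} to $\ka_1\geq 1$ in the half-plane model, use \lref{L:convex} to reparametrize every curve of a fixed canonical subspace $\sr C_{\ka_1}^{\ka_2}(u,v;\tau)$ by the Euclidean argument $\theta\in[0,\tau]$ of its tangent vector, and contract by the straight-line interpolation $\ga_s(\theta)=(1-s)\ga_0(\theta)+s\ga_1(\theta)$ toward a fixed basepoint curve $\ga_0$. Note, however, that two of the difficulties you anticipate are non-issues in this formulation. One interpolates \emph{positions}, not tangent or velocity data: the two curves share both endpoints (they lie in the same canonical subspace), so $\ga_s$ has the right endpoints for free, and since $\dot\ga_s(\theta)$ is a positive combination of two positive multiples of $e^{i\theta}$, regularity, the end directions and the total turning $\tau$ are all automatic; no integral constraints are needed.

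The genuine gap is the step you yourself call the one ``where the argument lives or dies'': you never verify that the curvature constraint is convex in the raw coordinates, and you speculate that $\PSL_2(\R)$-adapted variables (the frame/$\La$ formalism of \S\ref{S:frame}) may be required. They are not --- the paper never uses them here --- and what closes the argument is an osculating-circle computation missing from your outline. Since the curvature exceeds $1$, the hyperbolic and Euclidean osculating circles of each $\ga_i$ agree and lie entirely in $H$ (traversed counterclockwise, by \rref{R:orientation}(b)). For a curve parametrized by argument, the osculating circle at $\theta$ has Euclidean radius $r=\abs{\dot\ga(\theta)}$ and Euclidean center $\ga(\theta)+i\dot\ga(\theta)$, both of which are \emph{affine} in the interpolated data; hence the osculating circle of $\ga_s$ at $\theta$ is the convex combination (center with center, radius with radius) of those of $\ga_0$ and $\ga_1$. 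A Euclidean circle in $H$ of radius $r$ and center height $y>r$ has hyperbolic diameter $d=\log\big(\tfrac{y+r}{y-r}\big)$, so its hyperbolic curvature is $\coth(d/2)=y/r$, and the constraint $\ka_1<\ka_{\ga_s}(\theta)<\ka_2$ becomes the pair of \emph{linear} strict inequalities $\ka_1 r < y < \ka_2 r$ (only the first when $\ka_2=+\infty$; note also that $y>r$, i.e.\ containment in $H$, is again linear). Linear strict inequalities are preserved under convex combination, and that is the entire verification. Without this computation, or an equivalent one, your proposal cannot be completed as written.
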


\begin{proof}
	We will work in the half-plane model $ H $ throughout the proof. Points and
	tangent vectors will thus be regarded as elements of $ \C $.  By
	\pref{P:reduction}, it may be assumed that $ \ka_1 \geq 1 $ and that $ u $
	is parallel to $ 1 \in \Ss^1 $. Let $ \tau \in \R $ be a fixed valid total
	turning for curves in $ \sr C_{\ka_1 }^{\ka_2 }(u,v) $, meaning that $
	e^{i\tau} $ is parallel to $ v \in \C $. 

	Given $ \ga \in \sr C_{\ka_1 }^{\ka_2 }(u,v;\tau) $, let $ \theta_{\ga}
	\colon [0,1] \to [0,\tau] $ be the unique continuous function such that $
	\theta_\ga(0) = 0 $ and
	\begin{equation*}
		\frac{\ta_{\ga}(t)}{\vert \ta_\ga(t)\vert} = e^{i\theta_{\ga}(t)} \text{
		for all $ t\in [0,1] $,}
	\end{equation*}
	where $ \abs{\ } $ denotes the usual absolute value of complex numbers.
	By \lref{L:convex}, $ \theta_\ga $ is a diffeomorphism of $ [0,1] $ onto $
	[0,\tau] $. Thus it may be used as a parameter for $ \ga $.

	Now fix $ \ga_0 \in \sr C_{\ka_1 }^{\ka_2 }(u,v;\tau) $ and set $ \ga_1:=\ga
	$, both viewed as curves in $ H $ and parametrized by the argument $ \theta
	\in [0,\tau] $, as above. Define 
	\begin{equation*}
		\ga_s(\theta) = (1-s)\ga_0(\theta) + s\ga_1(\theta) \in \C \quad (s \in
		[0,1],~\theta \in [0,\tau]).
	\end{equation*}
	Then each $ \ga_s \colon [0,\tau] \to H $ is a smooth curve satisfying $
	\ta_{\ga_s}(0) = u $ and $ \ta_{\ga_s}(1) = v $. Moreover, it has the
	correct total turning $ \tau $, since $ \ta_{\ga_s}(\theta) $ is always
	parallel to $ e^{i\theta} $.  By \rref{R:constant}\?(g) and the definition
	of ``osculation'', the constant-curvature curves osculating $ \ga $ from the
	Euclidean and hyperbolic viewpoints at any $ \theta $ agree. Since $ \ka_1
	\geq 1$,  they are both equal to a certain circle completely contained in $
	H $, traversed counterclockwise; compare \rref{R:orientation}\?(b).
	Therefore the osculating constant-curvature curve to $ \ga_s $ at $ \theta $
	is another circle $ C_s $, the corresponding convex combination of the
	osculating circles $ C_0 $ to $ \ga_0 $ and $ C_1 $ to $ \ga_1 $ at $ \theta
	$ (see \fref{F:circles}). Let $ y_s $ denote the $ y $-coordinate of the
	Euclidean center of $ C_s $ and $ r_s $ its Euclidean radius ($ s \in [0,1]
	$). More explicitly, 
	\begin{equation*}
		y_s = (1-s)y_0+sy_1 \quad \text{and} \quad r_s = (1-s)r_0 + sr_1.
	\end{equation*}
	The \tsl{hyperbolic} diameter $ d_s $ of $ C_s $ is given by
	\begin{equation*}
		d_s = \log \bigg( \frac{y_s+r_s}{y_s-r_s} \bigg).
	\end{equation*}
	Note that $ y_s>r_s $, as this is true for $ s=0,\,1 $. Because 
	\begin{equation*}
		\phantom{\quad (i=0,1)}2\arccoth(\ka_2) < d_i < 2\arccoth(\ka_1) 
		\quad (i=0,1)
	\end{equation*}
	by hypothesis, a trivial computation shows that $ d_s $ satisfies the same
	inequalities for all $ s\in [0,1] $. Hence the curvature of $ \ga_s $ does
	indeed take values inside $ (\ka_1,\ka_2) $, and $ (\ga,s) \mapsto \ga_s $
	defines a contraction of $ \sr C_{\ka_1 }^{\ka_2 }(u,v;\tau) $.
\end{proof}

\begin{figure}[ht]
	\begin{center}
		\includegraphics[scale=.20]{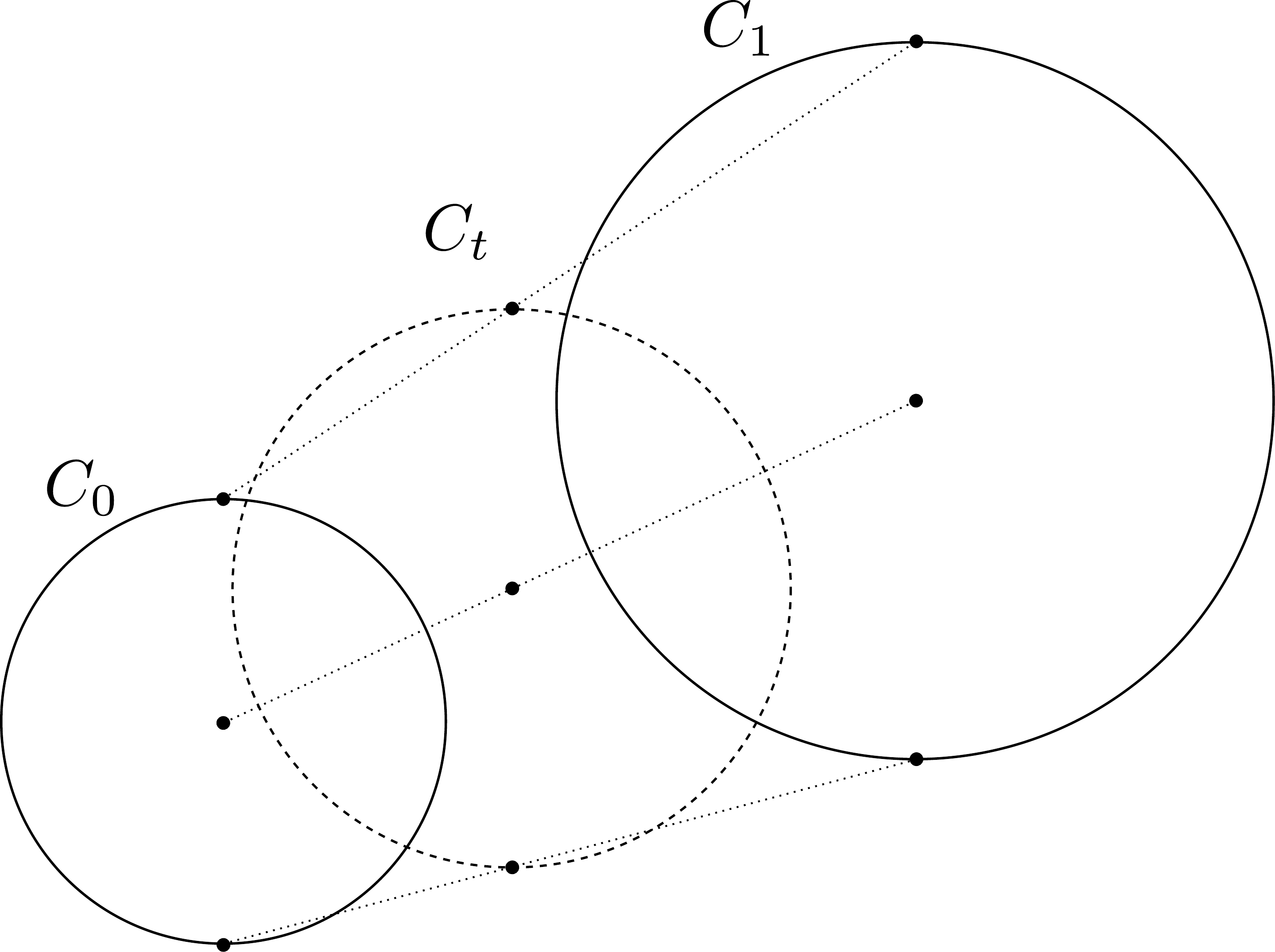}
		\caption{Proof of \tref{T:disjoint}}
		\label{F:circles}
	\end{center}
\end{figure}

\begin{crl}\label{C:disjoint}
	If $ (\ka_1,\ka_2) $ is disjoint from $ [-1,1] $, then $ \sr
	C_{\ka_1}^{\ka_2}(u,v) $ is homeomorphic to the disjoint union of
	countably many copies of the separable Hilbert space.
\end{crl}
\begin{proof}
	This is an immediate consequence of \pref{P:unattainable}\?(c),
	\tref{T:disjoint} and \lref{L:Hilbert}\?(b).
\end{proof}

\begin{urmk}\label{R:disk}
	It is interesting to note that the proof of \tref{T:disjoint} does not work
	in the disk model. To understand what could go wrong, consider the situation
	where $ C_0 $ and $ C_1 $ have the same radius and the midpoint of the
	segment joining their centers is the origin of $ D $ (all concepts here
	being Euclidean).  Then the hyperbolic radius of $ C_{\frac{1}{2}} $ can be
	arbitrarily small compared to that of $ C_0 $ and $ C_1 $.
\end{urmk}

\begin{urmk}\label{R:disjoint}
	The argument in the proof of \tref{T:disjoint} goes through without
	modifications to show that each component of $ \bar{\sr C}_{\ka_1 }^{\ka_2
	}(u,v) $ is contractible if $ [\ka_1,\ka_2] $ is disjoint from $ [-1,1] $.
\end{urmk}


\section{The case where $ (\ka_1,\ka_2) \subs [-1,1] $}\label{S:contained}

In this section we will work with the spaces $ \sr L_{\ka_1 }^{\ka_2
}(u,v) $ which are introduced in \S\ref{S:discontinuous}. These are larger than
$ \sr C_{\ka_1 }^{\ka_2 }(u,v) $ in that they include regular piecewise $ C^2 $
curves. Our proof of \tref{T:contained} uses such curves and is therefore more
natural in the former class. We recommend that the reader ignore the technical
details for the moment and postpone a careful reading of
\S\ref{S:discontinuous}. There is also a way to carry out the proof below in the
setting of $ \sr C_{\ka_1 }^{\ka_2 }(u,v) $: Whenever a discontinuity of the
curvature arises, take an approximation by a smooth curve (which needs to be
constructed); this path is more elementary but also cumbersome. The reader will
notice that similar discussions appear in \cite{Saldanha3}, \cite{SalZueh} and
\cite{SalZueh1}.

Recall the Mercator model $ M $ defined in \dref{D:Mercator}. Suppose that $ \ga
$ is a smooth regular curve in $ M $ which can be written in the form
\begin{equation}\label{E:gammax}
	\ga\colon [a,b] \to M,\quad \ga(x) = (x,y(x))
\end{equation}
for some $ [a,b] \subs (0,\pi) $; in other words, the image of $ \ga $ is the
graph of a function $ y(x) $.  A straightforward computation using
\rref{R:compM} shows that the curvature of $ \ga $ is then given by
\begin{equation}\label{E:curvatureM}
	\ka_\ga = \frac{1}{\sqrt{1+\dot y^2}}\bigg( 
	\frac{\ddot y \sin x}{1+\dot y^2} - \dot y \cos x  \bigg).
\end{equation}
More important than this expression itself is the observation that it does not
involve $ y $, only its derivatives (because vertical translations are
isometries of $ M$). This can be exploited to express geometric properties of $
\ga $ solely in terms of the function $ f = \dot y $, and in particular to
prove the following.\footnote{A similar construction was used in \S3 of 
	\cite{SalZueh1}.}

\begin{thm}\label{T:contained}
	If $ (\ka_1,\ka_2) $ is contained in $ [-1,1] $, then $ \sr C_{\ka_1
	}^{\ka_2 }(u,v) $ is either empty or contractible, hence homeomorphic to
	the separable Hilbert space. 
\end{thm}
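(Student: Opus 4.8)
The plan is to reduce the problem to a study of the single scalar function $f=\dot y$ and build an explicit contraction there. By \pref{P:reduction} (first row of Table \ref{Ta:reduction}) I may assume $(\ka_1,\ka_2)=(0,\ka_0)$ with $\ka_0\in(0,1]$ and that $u$ is the vector $1\in\Ss^1$ based at $\big(\tfrac{\pi}{2},0\big)\in M$, exactly the normalization of \lref{L:graph}. That lemma guarantees every curve in $\sr C_{\ka_1}^{\ka_2}(u,v)$ is the graph $x\mapsto(x,y(x))$ of a function over some interval $[a,b]\subs(0,\pi)$, so I can use $x$ itself as the parameter. If the space is empty there is nothing to prove (\pref{P:unattainable}\?(b) shows at most one canonical subspace is nonempty), so assume it is nonempty and fix a base curve $\ga_\ast$ with profile $f_\ast=\dot y_\ast$.

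The key observation, already recorded in \eqref{E:curvatureM}, is that the curvature depends only on $f=\dot y$ and $\dot f=\ddot y$, not on $y$. Solving \eqref{E:curvatureM} for the inequality $\ka_1<\ka_\ga<\ka_2$ converts the two curvature bounds into two differential inequalities of the schematic form $g_1(x,f)<\dot f<g_2(x,f)$, linear (indeed affine) in $\dot f$ once $f$ and $x$ are fixed. The endpoint conditions $\ta_\ga(0)=u$, $\ta_\ga(1)=v$ translate into prescribed boundary values $f(a)$, $f(b)$ for the profile (together with the interval $[a,b]$ being determined by the turning), and the space $\sr L_{\ka_1}^{\ka_2}(u,v)$ of \S\ref{S:discontinuous} lets me work with $f$ merely in $L^2$ with $\dot f$ controlled, which is why the theorem is proved there first. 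The plan is then to contract the set of admissible profiles $f$ to the single profile $f_\ast$ by a straight-line homotopy $f_s=(1-s)f_\ast+sf$, \emph{after} a reparametrization that puts every curve on a common $x$-interval so that the linear combination is well defined.

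The main obstacle is convexity: I need the set of admissible $f$ to be convex, or at least contractible, under this straight-line homotopy, and this is precisely where the shape of the two inequalities $g_1(x,f)<\dot f<g_2(x,f)$ matters. If $g_1,g_2$ were affine in $f$ the admissible region would be a convex subset of an affine space of functions and the contraction would be immediate; in general they are not, so I expect to massage \eqref{E:curvatureM} into a genuinely convex constraint, likely by passing to a better coordinate on the fibre — for instance, writing the constraint in terms of the turning angle of the tangent (the argument $\theta$ with $\dot y=\tan(\theta-\tfrac{\pi}{2}+x)$ type substitution suggested by \rref{R:compM}\?(a)) rather than $f$ itself. This is the analogue in the present setting of the trick used in the disjoint case (\tref{T:disjoint}), where convex combinations of Euclidean osculating circles did the job; here the convexity must be engineered at the level of the profile $f$.

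Once convexity (hence contractibility of the admissible set of profiles) is in hand, the contraction $(\ga,s)\mapsto\ga_s$ descends to the space of curves: recovering $y_s$ from $f_s$ by integration and fixing the constant of integration via the common basepoint produces a continuous family of curves in $\sr L_{\ka_1}^{\ka_2}(u,v)$ with $\ga_0=\ga_\ast$ and $\ga_1=\ga$, establishing contractibility. The homeomorphism with separable Hilbert space then follows from \lref{L:Hilbert}\?(b) (a contractible space in this class is a Hilbert manifold, hence homeomorphic to the model space), and the passage back from $\sr L_{\ka_1}^{\ka_2}(u,v)$ to $\sr C_{\ka_1}^{\ka_2}(u,v)$ is handled by the homotopy equivalence with dense image proved in \S\ref{S:discontinuous}.
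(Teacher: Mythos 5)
Your reduction is the same as the paper's: pass to the Mercator model, normalize $u$, use \lref{L:graph} to write every curve as a graph, and encode it by the profile $f=\dot y$ subject to prescribed boundary values, the endpoint constraint $\int_a^b f\,dx=A_0$, and the curvature inequalities $\psi_{\ka_1}(x,f)\leq\dot f\leq\psi_{\ka_2}(x,f)$ coming from \eqref{E:curvatureM}, all inside the Hilbert-manifold setting of \S\ref{S:discontinuous}. But the heart of the theorem is the construction of the contraction, and that is exactly the step you leave open. You acknowledge that the admissible set of profiles is not convex --- the bounds $\psi_{\ka}(x,z)=\frac{1+z^2}{\sin x}\big(z\cos x+\ka\sqrt{1+z^2}\big)$ are genuinely nonlinear in $z$ (for $\ka=0$ they are cubic, hence neither convex nor concave) --- and you propose to repair this by an unspecified fiberwise change of coordinate. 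No such substitution is exhibited, and there is a structural obstruction to the plan: the endpoint constraint is \emph{linear} in $f$, so it survives straight-line homotopies, but an affine substitution $f=c_1(x)+c_2(x)w$ cannot alter the convexity type of the inequalities, while any nonaffine substitution (such as the turning-angle substitution you suggest) converts the endpoint constraint into a nonlinear one, $\int_a^b\phi(x,w(x))\,dx=A_0$, which straight-line homotopies in $w$ will then violate. Your proposal says nothing about how to satisfy both constraints simultaneously, and that is precisely the difficulty.

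The paper resolves this by abandoning convexity altogether. It introduces barrier functions $g_\ka$ and $h_\ka$ (profiles of hypercircle arcs issuing from $u$, respectively ending at $v$), derives comparison inequalities from \lref{L:unattainable} and \eqref{E:increasing}, and deforms $f$ through the medians $f_{(\la,\mu)}=\midd\big(h_{\bar\ka_1}\,,\,g_{\la}\,,\,f\,,\,g_{\mu}\,,\,h_{\bar\ka_2}\big)$ of \eqref{E:median}: a median of functions satisfying the curvature inequalities again satisfies them (\lref{L:median}), which is the substitute for convexity. The endpoint constraint is then restored not by linearity but by monotonicity: the area $A(\la,\mu)=\int_a^b f_{(\la,\mu)}\,dx$ is strictly increasing in $\la$ and in $\mu$, so along each line $\mu-\la=s(\ka_+-\ka_-)$ in the triangle $\De$ there is a unique pair $(\la(s),\mu(s))$ with $A=A_0$, and $s\mapsto f_{(\la(s),\mu(s))}$ is the contraction, ending at $\midd\big(h_{\bar\ka_1}\,,\,g_{\ka_0}\,,\,h_{\bar\ka_2}\big)$, which is independent of $f$. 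A smaller point: when $(\ka_1,\ka_2)=(-1,1)$, \pref{P:reduction} does not apply (that case is explicitly excluded), so your reduction to $(0,\ka_0)$ is unavailable there; only the normalization of $u$, via the remark following \pref{P:reduction}, survives --- and that is all the paper's proof actually uses.
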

\begin{proof}
	By \pref{P:reduction}, we can assume that $ u $ is represented in $ M $ as
	the vector $ 1 \in \Ss^1 $ based at $ (\frac{\pi}{2},0) $. By \lref{L:C^2},
	it suffices to prove that the Banach manifold $ \sr L_{\ka_1 }^{\ka_2 }(u,v)
	$ is either empty or weakly contractible. Let 
	\begin{equation*}
		\Ss^k \to \sr L_{\ka_1}^{\ka_2}(u,v) ,\quad  p\mapsto \ga_p
	\end{equation*}
	be a continuous map. By \lref{L:smoothie}, it can be assumed that each $
	\ga_p $ is smooth. In particular, it is possible to choose $ \bar\ka_2<\ka_2
	$ and $ \bar\ka_1>\ka_1 $ such that the curvatures of all the $ \ga_p $ take
	values inside $ (\bar \ka_1,\bar\ka_2) $. By \lref{L:graph}, any such curve
	$ \ga $ may be parametrized as in \eqref{E:gammax}.	The function $ f = \dot
	y $ satisfies the following conditions, whose meaning will be explained
	below:
	\begin{enumerate} 
		\item [(i)] $ f(a)= \al $ and $ f(b)= \be $.  
		\item [(ii)] $ \int_a^b f(t)\,dt = A_0 $.  
		\item [(iii)] $ \psi_{\bar \ka_1}(x,f(x)) \leq \dot f(x) \leq
			\psi_{\bar\ka_2}(x,f(x)) $ for a.e.~$ x \in [a,b] $,
			where $ \psi_\ka \colon [a,b] \times \R \to \R $ is given by: 
			\begin{equation*}
				\psi_{\ka}(x,z) = \frac{1+z^2}{\sin x} \big(z\cos x + \ka
				\sqrt{1+z^2}\big)\qquad (\ka \in [\bar \ka_1,\bar \ka_2],~
				x\in [a,b],~z\in \R).
			\end{equation*} 

	\end{enumerate} 
	In the present case, $ a = \frac{\pi}{2} $ is the $ x $-coordinate of $
	\ga(a) = \big(\frac{\pi}{2},0\big) $ and $ b $ is the $ x $-coordinate of $
	\ga(b) \in M \subs \C $. The real numbers $ \al =0 $ and $ \be $  in (i) are
	the slopes of $ u $ and $ v $ regarded as vectors in $ \C $. Condition (ii)
	prescribes the $ y $-coordinate of $ \ga(b) $.  Finally, the inequalities in
	(iii) express the fact that the curvature of $ \ga $ takes values in $
	[\bar\ka_1,\bar\ka_2] $; compare \eqref{E:curvatureM}.  Conversely, suppose
	that an absolutely continuous function $ f\colon [a,b] \to \R $ satisfies
	(i)--(iii). If we set 
	\begin{equation*}
		y(x):=\int_a^xf(t)\,dt 
	\end{equation*} 
	and define $ \ga $ through \eqref{E:gammax}, then $ \ga \in \sr
	L_{\ka_1}^{\ka_2}(u,v) $.  Thus one can produce a contraction of the
	original family of curves by constructing a homotopy $ (s,f) \mapsto f_s $
	of the corresponding family of functions $ f=f_1 $, subject to the stated
	conditions throughout, with $ f_0 $ independent of $ f $. This is what we
	shall now do.

	For each $ \ka \in [\bar \ka_1,\bar \ka_2] $, let $ g_\ka $ be the
	solution of the initial value problem
	\begin{equation}\label{E:gka}
		\dot g(x) = \psi_{\ka}(x,g(x)), \quad g(a) = \al,
	\end{equation}
	where $ \al $ and $ \psi_\ka $ are as in conditions (i) and (iii).
	Geometrically, the graph of $ g $ is an arc of the hypercircle of curvature $
	\ka $ with initial unit tangent vector $ u $, represented in $ M $. In
	particular, $ g_\ka $ is defined over all of $ [a,b] $ by \lref{L:graph}. 
	Similarly, for each $ \ka \in [\bar \ka_1,\bar \ka_2] $, let 
	$ h_{\ka} $ be the solution of the initial value problem 
	\begin{equation*}
		\dot h(x) = \psi_{\ka}(x,h(x)), \quad h(b) = \be.
	\end{equation*} 
	Geometrically, the graph of $ h_\ka $ is an arc of the hypercircle of
	curvature $ \ka $ whose final unit tangent vector is $ v $. Although $
	h_{\ka} $ is smooth, it is possible that it is not defined over all of $
	[a,b] $; if its maximal domain of definition is $ (a',b] $ for some $ a' > a
	$, then we extend $ h_\ka $ to a function $ [a,b] \to \R \cup \se{\pm
	\infty} $ by setting it equal to $ \lim_{x \to a'{+}} h_{\ka}(x) $ over $
	[a,a'] $.

	Using this geometric interpretation, it follows from \lref{L:unattainable}
	that 
	\begin{equation*}
		g_{\bar\ka_1},\,h_{\bar\ka_1} \leq f \leq g_{\bar\ka_2},\,h_{\bar\ka_2}.
	\end{equation*}
	Moreover, from the same result one deduces that 
	\begin{equation}\label{E:increasing}
		g_{\ka}(x) < g_{\ka'}(x) \ \ \text{for all\ \ $ x > a $\ \ if\ \ 
		$ \ka < \ka' $}.
	\end{equation}
	For each $ \la,\,\mu \in [\bar \ka_1,\bar \ka_2]$, define
	$ f_{(\la,\mu)} \colon [a,b]\to \R$ by
	\begin{equation}\label{E:median}
		f_{(\la,\mu)}(x)=\midd \big(h_{\bar \ka_1}(x) \,,\, g_{\la}(x) \,,\,
		f(x) \,,\, g_{\mu}(x) \,,\, h_{\bar\ka_2}(x) \big).
	\end{equation}
	Notice first that this function does not take on infinite values, 
	since $ f,\,g_\la,\,g_\mu $ are real functions. Similarly, since three of
	the functions above (namely, $ f,\,g_{\la},\,g_{\mu} $) take the value $ \al
	$ at $ a $, and three of them  (namely, $ f,\,h_{\bar\ka_1},\,h_{\bar\ka_2}
	$) take the value $ \be $ at $ b $, $ f_{(\la,\mu)} $ automatically
	satisfies condition (i). It is easy to verify that 
	it is Lipschitz and satisfies (iii) as well (see \lref{L:median} below). 

	It remains to choose $ (\la,\mu) $ appropriately to ensure that it satisfies
	condition (ii).  Let 
	\begin{alignat*}{9}
		\ka_+ = \min &\set{\ka \in [\bar \ka_1,\bar\ka_2]}{f(x) \leq g_{\ka}(x)
		\text{ for all $ x \in [a,b] $}}, \\
		\ka_- = \max &\set{\ka \in [\bar \ka_1,\bar\ka_2]}{g_{\ka}(x) \leq f(x)
		\text{ for all $ x \in [a,b] $}}, \\
		\De=&\set{(\la,\mu)\in [\ka_-,\ka_+]}{\la \leq \mu} \text{\quad
			(cf.~\fref{F:delta})}.
	\end{alignat*} 

	\begin{figure}[ht]
		\begin{center}
			\includegraphics[scale=.43]{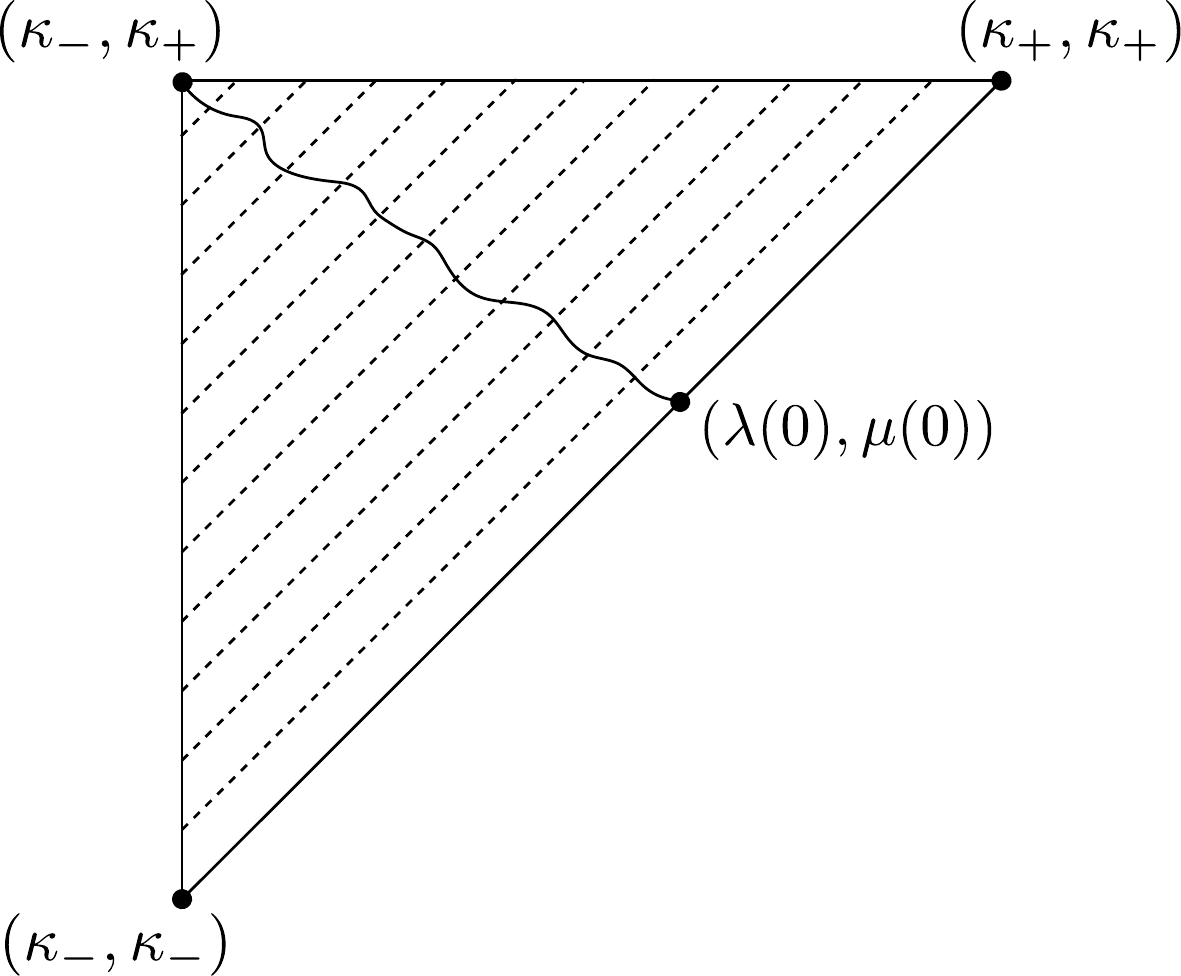}
			\caption{
				A diagram illustrating the triangle $ \De $ in the $ (\la,\mu)
				$-plane. The dashed segments are the intersections of $ \De $
				with the lines $ \set{(\la,\mu) \in \R^2}{\mu-\la =
				s(\ka_+-\ka_-)} $ $ (s\in [0,1]) $.
			} 
			\label{F:delta}
		\end{center}
	\end{figure}

	Define $A \colon \De\to \R$ to be the area under the
	graph of $f_{(\la,\mu)}$:
	\begin{equation*}
		A(\la,\mu)=\int_a^bf_{(\la,\mu)}(x)\?dx.
	\end{equation*}
	Continuous dependence of the solutions of \eqref{E:gka} on $ \ka $ implies
	that $A$ is a continuous function of $ (\la,\mu) $.
	Moreover, from 
	\begin{equation*}
		h_{\bar \ka_1},\,g_{\ka_-} \leq f \leq g_{\ka_+},\,h_{\bar \ka_2}, 
	\end{equation*}
	one deduces that
	\begin{equation*}
		f_{(\ka_-,\mu)} \leq f \leq f_{(\la,\ka_+)}
	\end{equation*}
	for all $ \la,\,\mu \in [\ka_-,\ka_+] $. Consequently, because the integral
	of $ f $ equals $ A_0 $, for each $s\in [0,1]$ the set 
	\begin{equation*}
		\set{(\la,\mu)\in \De}{\mu-\la=s(\ka_+-\ka_-)\text{\ and\ }
	A(\la,\mu) = A_0}
	\end{equation*} 
	is nonempty (see \fref{F:delta}). In fact, it consists of a unique point $
	(\la(s),\mu(s)) $. To establish this, it suffices to show that $ A(\la,\mu)
	$ is a strictly increasing function of both $ \la $ and $ \mu $. 
	Now if 
	\begin{equation*}
	\ka_- \leq \la < \la' \leq \mu \leq \ka_+,
	\end{equation*}
	then the set of all $ x \in [a,b] $ for which $ g_{\la}(x) < f(x) <
	g_{\la'}(x) $ is nonempty, by \eqref{E:increasing} and the choice of $
	\ka_{\pm} $. Therefore $ f_{(\la,\mu)} < f_{(\la',\mu)} $ holds over a set
	of positive measure, while the nonstrict inequality holds everywhere by
	\eqref{E:increasing}. This proves strict monotonicity with respect to $ \la
	$; the argument for $ \mu $ is analogous.  Continuity of $ A $ implies
	continuity of the curve $ s\mapsto (\la(s),\mu(s)) \in \De $ (which is
	depicted in bold in \fref{F:delta}).  
	The functions
	\begin{equation*}
	f_s\colon [a,b]\to \R,\quad f_s=f_{(\la(s),\mu(s))}
	\end{equation*}	
	satisfy all of conditions (i)--(iii) by construction, and they depend
	continuously on $ f $ and $ s $.  Let $ \ka_0 = \la(0) = \mu(0) $; then 
	\begin{equation*}
	f_0 = \midd 
	\big( h_{\bar\ka_1} \,,\, g_{\ka_0} \,,\, h_{\bar\ka_2} \big).
	\end{equation*}
	By \eqref{E:increasing}, there is at most one value of $ \ka \in [\bar
	\ka_1,\bar \ka_2] $ for which the integral of $ \midd ( h_{\bar\ka_1} \,,\,
	g_{\ka} \,,\, h_{\bar\ka_2} ) $ equals $ A_0 $.  This implies that $ \ka_0
	$, and hence $ f_0 $, is independent of $ f $.  Therefore $ (f,s) \mapsto
	f_s $ is indeed a contraction.
\end{proof}

The following fact was used without proof above.

\begin{lem}\label{L:median}
	The function $ f_{(\la,\mu)} $ of \eqref{E:median} is Lipschitz 
	and satisfies (iii).
\end{lem}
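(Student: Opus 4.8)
The plan is to extract from conditions (i)--(iii) a single two-sided differential inequality that is stable under the pointwise operations $\min$, $\max$ and $\midd$, and then to check that each of the five functions entering \eqref{E:median} satisfies it. Write $(\ast)$ for the condition that an absolutely continuous $\phi\colon[a,b]\to\R$ obey
\[
\psi_{\bar\ka_1}(x,\phi(x))\le\dot\phi(x)\le\psi_{\bar\ka_2}(x,\phi(x))\quad\text{for a.e.\ }x,
\]
so that $(\ast)$ for $f_{(\la,\mu)}$ is precisely condition (iii). The starting observation is that $\psi_\ka(x,z)$ is \emph{increasing} in $\ka$: one computes $\partial_\ka\psi_\ka(x,z)=(1+z^2)^{3/2}/\sin x>0$ on $(0,\pi)$. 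Hence $\psi_{\bar\ka_1}(x,z)\le\psi_\ka(x,z)\le\psi_{\bar\ka_2}(x,z)$ for every $\ka\in[\bar\ka_1,\bar\ka_2]$, and since $g_\la,g_\mu,h_{\bar\ka_1},h_{\bar\ka_2}$ each solve $\dot\phi=\psi_\ka(x,\phi)$ for some such $\ka$ (recall $\bar\ka_1\le\ka_-\le\la\le\mu\le\ka_+\le\bar\ka_2$), while $f$ satisfies (iii) by hypothesis, all five functions satisfy $(\ast)$ wherever they are finite.

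Next I would record the preservation property. If $\phi_1,\phi_2$ satisfy $(\ast)$, then so do $\min(\phi_1,\phi_2)$ and $\max(\phi_1,\phi_2)$: at a.e.\ point one of the two is strictly larger and the combination coincides locally with it, while on the coincidence set $\{\phi_1=\phi_2\}$ the derivatives agree a.e.\ (the standard fact that two absolutely continuous functions have equal derivatives a.e.\ on the set where they are equal); in either case the common realized value is the one fed into $\psi_{\bar\ka_i}$, so the inequalities persist. Since any order statistic --- in particular the median of five --- is a finite iteration of $\min$ and $\max$, it follows that $f_{(\la,\mu)}$ satisfies $(\ast)$, which is condition (iii). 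The possibly infinite values of $h_{\bar\ka_1}$ (equal to $-\infty$) and $h_{\bar\ka_2}$ (equal to $+\infty$) cause no difficulty: on any subinterval where one of them is infinite, $f_{(\la,\mu)}$ agrees there with the corresponding order statistic of the remaining finite functions, which already satisfy $(\ast)$.

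Finally, I would deduce the Lipschitz bound from (iii) together with a uniform bound on the range of $f_{(\la,\mu)}$, thereby avoiding any direct estimate near the blow-ups of the $h$'s. Because $g_{\bar\ka_1}\le g_\la\le g_\mu\le g_{\bar\ka_2}$ by \eqref{E:increasing} and $g_{\bar\ka_1}\le f\le g_{\bar\ka_2}$, at each $x$ at most one of the five values lies below $g_{\bar\ka_1}(x)$ (namely $h_{\bar\ka_1}$) and at most one above $g_{\bar\ka_2}(x)$ (namely $h_{\bar\ka_2}$); hence the median obeys $g_{\bar\ka_1}(x)\le f_{(\la,\mu)}(x)\le g_{\bar\ka_2}(x)$ and so takes values in the fixed compact set $K\subs\R$ given by the range of the continuous functions $g_{\bar\ka_1},g_{\bar\ka_2}$ on $[a,b]$. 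The function $f_{(\la,\mu)}$ is continuous and, being a finite $\min$/$\max$ composition of absolutely continuous functions away from the finitely many points where an $h$ ceases to be finite, is itself absolutely continuous; condition (iii) then yields $\abs{\dot f_{(\la,\mu)}(x)}\le\max_{i}\sup\set{\abs{\psi_{\bar\ka_i}(t,z)}}{t\in[a,b],\ z\in K}$, which is finite since $[a,b]\subs(0,\pi)$ keeps $\sin t$ bounded away from $0$. Thus $f_{(\la,\mu)}$ is Lipschitz. The one genuine subtlety, and the step I would treat most carefully, is exactly the interaction of the $\min$/$\max$ operations with the infinite branches of $h_{\bar\ka_1},h_{\bar\ka_2}$: verifying that $f_{(\la,\mu)}$ remains continuous and absolutely continuous across the transition points and that the infinite values never influence either the realized value or the a.e.\ derivative entering $(\ast)$.
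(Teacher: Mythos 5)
Your proof is correct, and its core coincides with the paper's: both arguments rest on the fact that, at almost every point, the derivative of a pointwise median (or of any lattice combination of $\min$'s and $\max$'es) agrees with the derivative of one of the constituent functions taking the same value there, so the two-sided differential inequality (iii) — which involves only the value and the derivative, not the antiderivative — is inherited by $ f_{(\la,\mu)} $; and both dispose of the infinite branches of $ h_{\bar\ka_1},h_{\bar\ka_2} $ by localizing to subintervals where the blowing-up function cannot influence the median. Where you genuinely diverge is the Lipschitz half. The paper quotes the general fact that a median of $ c $-Lipschitz functions is $ c $-Lipschitz and then subdivides $[a,b]$; this implicitly requires the constituent functions to be Lipschitz on each piece, which is delicate because $ h_{\bar\ka_i} $ has unbounded derivative as it approaches its vertical-tangent point even where it is still finite. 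You instead first pin the median into the fixed window $ g_{\bar\ka_1}\leq f_{(\la,\mu)}\leq g_{\bar\ka_2} $ (for which you only need that three of the five entries lie in that window — the identification of \emph{which} $ h $ can escape it is unnecessary and unverified, but harmless), and then read off a uniform bound on the a.e.\ derivative directly from (iii) and compactness of $ [a,b]\subs(0,\pi) $. This order of deduction — (iii) first, Lipschitz as a consequence — sidesteps the non-Lipschitzness of the $ h $'s entirely and is, if anything, more robust than the paper's remark. Two small imprecisions to fix in a final write-up: the phrase ``at a.e.\ point one of the two is strictly larger'' is wrong as stated (the coincidence set can have positive measure), though your very next clause, invoking the standard fact that absolutely continuous functions have equal derivatives a.e.\ on their coincidence set, is exactly what repairs it; and the absolute continuity of $ f_{(\la,\mu)} $ across the transition points deserves the explicit observation that near such a point the blowing-up $ h $ lies outside the window $ [g_{\bar\ka_1},g_{\bar\ka_2}] $ containing the median, so the median is locally a lattice combination of the remaining, locally absolutely continuous, functions — you flag this as the key subtlety and describe the right mechanism, so the gap is one of detail, not of idea.
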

\begin{proof}
	More generally, let $ \phi $ be the median of $ \phi_1,\dots,\phi_{2n+1}
	\colon [a,b] \to \R$. If each $ \phi_k $ is $ c $-Lipschitz, then $ \phi $
	is $ c $-Lipschitz. Hence $ \phi $ is absolutely continuous and its
	derivative exists a.e..  Furthermore, if $ \phi $ and each $ \phi_k $ are
	differentiable at $ x $, then $ \phi'(x) = \phi_k'(x) $ for some $ k $ such
	that $ \phi(x) = \phi_k(x) $. In particular, if each $ \phi_k $ satisfies
	the inequalities in (iii) (with $ \phi_k $ in place of $ f $), then so does
	$ \phi $.

	The function $ f_{(\la,\mu)} $ does not immediately conform to this
	situation because $ h_{\bar \ka_i} $ $ (i=1,2) $ may take on infinite
	values. This can be circumvented by subdividing $ [a,b] $ into
	at most three subintervals (where none, one or both of $ h_{\bar \ka_i} $
	are infinite) and applying the preceding remarks.
\end{proof}


\section{Spaces of curves with constrained curvature of class $ C^r $}
\label{S:general}

In this section we consider spaces of curves with constrained curvature on an
arbitrary surface, not necessarily hyperbolic nor orientable.  We study their
behavior under covering maps and show that they are always nonempty if $
S $ is compact; this should be contrasted with \lref{L:attainable}. 

A \tdfn{surface} is a smooth Riemannian 2-manifold. Given a regular curve $ \ga
\colon [0,1] \to S $,
its \tdfn{unit tangent} $\ta=\ta_\ga$ is the lift of $ \ga $ to
the unit tangent bundle $UTS$ of $S$: 
\begin{equation*}
	\ta\colon [0,1]\to UTS, \quad \ta(t)=\frac{\dot\ga(t)}{\abs{\dot\ga(t)}}.
\end{equation*}
Now let an orientation of $ TS_{\ga(0)} $ be fixed.  The \tdfn{unit
normal} to $\ga$ is the map $\no=\no_\ga\colon [0,1]\to UTS$ determined by the
requirement that $ (\ta(t),\no(t)) $ is an orthonormal basis of $ TS_{\ga(t)} $
whose parallel translation to $ \ga(0) $ (along the inverse of $ \ga $) is
positively oriented, for each $ t \in [0,1] $. Assuming $\ga$ is twice
differentiable, its \tdfn{curvature} $\ka=\ka_\ga$ is given by
\begin{equation}\label{E:curvature}
	\ka:=\frac{1}{\abs{\dot\ga}}\Big\langle\frac{D\ta}{dt},\no\Big\rangle
	=\frac{1}{\abs{\dot\ga}^2}\Big\langle\frac{D\dot\ga}{dt},\no\Big\rangle.
\end{equation} 
Here $D$ denotes covariant differentiation (along $\ga$).

\begin{urmk}\label{R:alternative}
	If $ S $ is nonorientable, it is more common to define the 
	(unsigned) curvature of $ \ga \colon [0,1] \to S $ by
	\begin{equation*}
		\ka = \frac{1}{\abs{\dot \ga}}\abs{\frac{D\ta}{dt}}.
	\end{equation*}
	If $ S $ is orientable, the usual definition coincides with
	\eqref{E:curvature}, but $ \no $ is defined by the condition that $
	(\ta(t),\no(t)) $ be positively oriented with respect to a specified
	orientation of $ S $, rather than the parallel translation of an orientation
	of $ TS_{\ga(0)} $.  These two definitions are equivalent, since an
	orientation of $ TS_{\ga(0)} $ determines an orientation of $ S $ if the
	latter is orientable. The definition that we have chosen has the advantage
	of allowing arbitrary bounds for the curvature of a curve on a nonorientable
	surface, and in particular the concise formulation of \lref{L:covering}
	below. 
\end{urmk}

A geometric interpretation for the curvature is the following: Let $
v\colon [0,1]\to UTS $ be any smooth parallel vector field along $ \ga $, and
let $ \theta\colon [0,1]\to \R $ be a function measuring the oriented angle from
$ v(t) $ to $ \ta(t) $.  Then a trivial computation shows that $ \dot\theta =
\ka \abs{\dot\ga} $. In particular, the \tdfn{total curvature}
\begin{equation*}
	\int_0^1\ka(t)\abs{\dot\ga(t)}\,dt
\end{equation*}
of $ \ga $ equals $ \theta(1)-\theta(0) $. 

In all that follows, the curvature bounds $ \ka_1<\ka_2 $ are allowed to take
values in $ \R\cup\se{\pm\infty} $, $ S $ denotes a surface and $ u,\,v\in UTS
$. Moreover, it is assumed that an orientation of $ TS_p $, where $ p $ is the
basepoint of $ u $, has been fixed.

\begin{dfn}[spaces of $ C^r $ curves]\label{D:Curve} 
	Define $\sr CS_{\ka_1}^{\ka_2}(u,v)^r$ to be the set, endowed with the $C^r$
	topology (for some $ r \geq 2 $), of all $C^r$ regular curves $\ga\colon
	[0,1]\to S$ such that: 
	\begin{enumerate} 
		\item [(i)] $\ta_\ga(0)=u$ and $\ta_\ga(1)=v$;
		\item [(ii)] $\ka_1<\ka_\ga(t)<\ka_2$ for each $t\in [0,1]$.  
	\end{enumerate}
\end{dfn} 

\begin{urmk}
	Of course, whether $ S $ is orientable or not, the topological properties
	(or even the voidness) of $ \sr CS_{\ka_1 }^{\ka_2 }(u,v) $ may be sensitive
	to the choice of orientation for $ TS_p $. More precisely, if $ \bar S $
	denotes the same surface $ S $ with the opposite orientation of $
	TS_p $, then $ \sr C\bar S_{\ka_1 }^{\ka_2 }(u,v) = \sr
	CS_{-\ka_2}^{-\ka_1}(u,v) $. 
\end{urmk}
	It will follow from (\ref{L:C^2}) that $r$ is irrelevant in the
	sense that different values yield spaces which are homeomorphic.
	Because of this, $ \sr CS_{\ka_1}^{\ka_2}(u,v)^r $ is denoted simply by $
	\sr CS_{\ka_1}^{\ka_2}(u,v) $ in this section.  

\begin{lem}\label{L:contractible}
	Define $ \sr CS_{\ka_1}^{\ka_2}(u,\cdot) $ as in \dref{D:Curve}, except that
	no condition is imposed on $ \ta_\ga(1) $, and similarly for $ \sr
	CS_{\ka_1}^{\ka_2}(\cdot,v) $ and $ \sr CS_{\ka_1}^{\ka_2}(\cdot,\cdot) $.
	Then:
	\begin{enumerate}
		\item [(a)] $ \sr CS_{\ka_1}^{\ka_2}(u,\cdot) $ and $ \sr
			CS_{\ka_1}^{\ka_2}(\cdot,v) $ are contractible.
		\item [(b)] $ \sr CS_{\ka_1}^{\ka_2}(\cdot,\cdot) $ is homotopy
			equivalent to $ UTS $. 
	\end{enumerate}
\end{lem}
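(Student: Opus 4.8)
The plan is to establish (a) first and then deduce (b) from it. For (a), I would encode a curve $\ga \in \sr CS_{\ka_1}^{\ka_2}(u,\cdot)$ by the pair $(\,|\dot\ga|,\ka_\ga\,)$ consisting of its speed and its curvature. Once $u \in UTS$ is fixed, the tangent lift $\ta_\ga\colon [0,1]\to UTS$ is the unique solution of a first-order system whose horizontal (geodesic) component is $|\dot\ga|$ and whose vertical (fibre-rotation) component is $\ka_\ga\,|\dot\ga|$, in accordance with the identity $\dot\theta = \ka\,|\dot\ga|$ recorded above; integrating this system recovers $\ga$. Thus $\ga \mapsto (\,|\dot\ga|,\ka_\ga\,)$ is a homeomorphism of $\sr CS_{\ka_1}^{\ka_2}(u,\cdot)$ onto the set of pairs $(v,\ka)$ with $v>0$ and $\ka \in (\ka_1,\ka_2)$ for which the reconstructed curve is defined on all of $[0,1]$. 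Since the target interval $(\ka_1,\ka_2)$ and the condition $v>0$ are both convex, the sole obstruction to contracting this set by taking convex combinations is that the reconstructed curve might run off $S$, which is not assumed complete.

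I would remove that obstruction by first shrinking every curve into a small ball. Let $p$ be the basepoint of $u$ and choose $\de>0$ with $\overline{B(p,\de)}$ a compact subset of $S$. Phase one is the homotopy that reparametrizes each $\ga$ onto the initial sub-arc on which its length equals $\min(\ell(\ga),\de/2)$, where $\ell(\ga)=\int_0^1|\dot\ga|$; a sub-arc of a curve in the space is again such a curve, so this stays inside $\sr CS_{\ka_1}^{\ka_2}(u,\cdot)$ and carries it into the subspace $\sr U_\de$ of curves of length $<\de$. Because intrinsic distance is bounded by arc length, every curve of length $<\de$ issuing from $p$ remains in $B(p,\de)\subs S$; hence for pairs $(v,\ka)$ with $\int_0^1 v<\de$ the reconstruction cannot escape $S$ and is automatically defined on $[0,1]$. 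Consequently $\sr U_\de$ is homeomorphic to the convex set of pairs $(v,\ka)$ with $v>0$, $\ka\in(\ka_1,\ka_2)$ and $\int_0^1 v<\de$, and phase two is the straight-line contraction of this convex set onto a fixed reference curve $\ga^0$ (say of constant curvature, constant speed, and length $\de/4$, which lies in $B(p,\de)$ and is hence orientable territory, so the frame description applies). Composing the two phases contracts $\sr CS_{\ka_1}^{\ka_2}(u,\cdot)$ to $\ga^0$. The assertion for $\sr CS_{\ka_1}^{\ka_2}(\cdot,v)$ follows by reversing the parameter $t \mapsto 1-t$, which identifies it with a space of the first kind with negated curvature bounds.

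For (b) I would use the map $q\colon \sr CS_{\ka_1}^{\ka_2}(\cdot,\cdot)\to UTS$, $q(\ga)=\ta_\ga(0)$, whose fibre over $u$ is $\sr CS_{\ka_1}^{\ka_2}(u,\cdot)$. Choosing $\de=\de(p)$ as a continuous positive function bounded by the injectivity radius, and letting $\ga_u=\iota(u)$ be the corresponding reference curve from (a), yields a section $\iota\colon UTS\to\sr CS_{\ka_1}^{\ka_2}(\cdot,\cdot)$ with $q\circ\iota=\mathrm{id}$. Performing the two-phase contraction of (a) simultaneously in every fibre—now with the basepoint-dependent radius $\de(p)$—produces a homotopy from the identity of $\sr CS_{\ka_1}^{\ka_2}(\cdot,\cdot)$ to $\iota\circ q$. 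Together with $q\circ\iota=\mathrm{id}$ this exhibits $q$ as a homotopy equivalence, so $\sr CS_{\ka_1}^{\ka_2}(\cdot,\cdot)\simeq UTS$.

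The main obstacle is entirely the possible incompleteness of $S$: the clean convex-combination argument is valid only after phase one has confined the curves to a ball with compact closure in $S$, so the real work is checking that phase one (the length-normalizing reparametrization) is continuous and never degenerates to a constant curve, and that in (b) the radius $\de(p)$ and the reference $\ga_u$ can be chosen to depend continuously on $u$. Verifying that $\ga\mapsto(\,|\dot\ga|,\ka_\ga\,)$ is a homeomorphism, and that a curve of length $<\de$ from $p$ stays in $B(p,\de)$, are the routine technical points; the regularity index $r$ is immaterial by \lref{L:C^2}.
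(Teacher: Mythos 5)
Your proposal is correct, and although its geometric core coincides with the paper's (shrink curves to short initial sub-arcs based near the initial point, then exploit convexity of the speed/curvature data that determines a curve with fixed initial unit tangent; orientation reversal to handle $\sr CS_{\ka_1}^{\ka_2}(\cdot,v)$; the evaluation map $\ga\mapsto\ta_\ga(0)$ together with a constant-curvature section for part (b)), your topological strategy is genuinely different. The paper never contracts the whole space: it shrinks only a compact family $K\to\sr CS_{\ka_1}^{\ka_2}(u,\cdot)$, cutting all curves down to one common length $\la$ with $0<\la<\inf_{p\in K}\mathrm{length}(\ga^p)$ --- a uniform positive bound that exists precisely because $K$ is compact --- thereby proving only \emph{weak} contractibility (resp.\ weak homotopy equivalence with $UTS$), and then invokes \lref{L:Hilbert}(b), which rests on theorems of Palais and Henderson, to convert weak homotopy information into genuine homotopy equivalence/homeomorphism of Banach manifolds. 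Your curve-dependent cut length $\min(\ell(\ga),\de/2)$ varies continuously with the individual curve, which removes the need for any uniform bound; you therefore obtain an explicit contraction of the entire space, and in (b) an explicit deformation retraction onto a section, bypassing the infinite-dimensional machinery altogether and yielding a formally stronger, self-contained conclusion. The trade-off is that the paper's route makes every continuity question trivial by compactness, whereas yours must verify joint continuity of the cut-point, of the correspondence $\ga\leftrightarrow(\abs{\dot\ga},\ka_\ga)$ between the $C^r$ topology on curves and the $C^{r-1}\times C^{r-2}$ topology on pairs (essentially the argument of \lref{L:C^2}), and of the fiberwise data in (b); these checks do go through. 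One small simplification for (b): rather than a continuous function bounded by the injectivity radius (whose good behavior on an incomplete surface is a needless worry), use $r(p)=\sup\{\de>0:\overline{B(p,\de)}\text{ is compact}\}$, which is $1$-Lipschitz where finite --- if $\overline{B(p,\de)}$ is compact and $d(p,q)<\eps$, then $\overline{B(q,\de-\eps)}$ is a closed subset of it, hence compact --- and identically $+\infty$ exactly when $S$ is complete.
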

\begin{proof}
	By \lref{L:Hilbert}\?(b), to prove that $ \sr CS_{\ka_1 }^{\ka_2 }(u,\cdot)
	$ is contractible, it is actually sufficient to show that it is \tsl{weakly}
	contractible. Let 
	\begin{equation*}
		K \to \sr CS_{\ka_1}^{\ka_2}(u,\cdot),\quad p \mapsto \ga^p, 
	\end{equation*}
	be a continuous map, where $ K $ is a compact space. By a preliminary
	homotopy, each $ \ga^p $ may be reparametrized with 
	constant speed. Let $ \la(p) = \text{length}(\ga^p) $ and $ 0 <
	\la < \inf_{p \in K}\la(p) $.  The curves can be shrunk
	through the homotopy
	\begin{equation*}
		(s,p)\mapsto \ga_s^p,\quad 
		\ga^p_s(t):=\ga^p\Big(\la(p)^{-1}\big[(1-s)\la+s\la(p)\big]\?t\Big)
		\quad (s,\,t\in [0,1],~p \in K) 
	\end{equation*}
	so that all $ \ga_0^p $ have length $ \la $, which can be chosen smaller
	than the injectivity radius of $ S $ at the basepoint of $ u $.  Then each $
	\ga_0^p $ is determined solely by its curvature, and conversely any function
	$ \ka \colon [0,1] \to (\ka_1,\ka_2) $ of class $ C^{r-2} $ 
	determines a unique curve of constant speed $ \la $ in $ S $ having $ u $
	for its initial unit tangent vector.  But the set of all such functions is
	convex.  
	
	Reversal of orientation of curves yields a homeomorphism between and $ \sr
	CS_{-\ka_2 }^{-\ka_1 }(-v,\cdot) $ and $ \sr CS_{\ka_1 }^{\ka_2 }(\cdot,v)
	$, hence a space of the latter type is also contractible.
	
	For (b), consider the map $ f\colon UTS \to \sr C_{\ka_1 }^{\ka_2
	}(\cdot,\cdot) $ which associates to $ u $ the unique curve of
	constant curvature $ \frac{1}{2}(\ka_1+\ka_2) $ having $ u $ for its initial unit
	tangent and length equal to half the injectivity radius of $ S $ at the
	basepoint of $ u $, parametrized with constant speed. Using the argument of
	the first paragraph, one deduces that $ f $ is a weak homotopy inverse of
	\begin{equation*}
		g \colon \sr CS_{\ka_1 }^{\ka_2 }(\cdot,\cdot) \to UTS, \quad
		g(\ga) = \ta_\ga(0).
	\end{equation*}
	Therefore $ \sr C_{\ka_1 }^{\ka_2 }(\cdot,\cdot) $ is homotopy equivalent to
	$ UTS $, by \lref{L:Hilbert}\?(b).
\end{proof}

\begin{lem}\label{L:covering}
	Let $ q\colon \te{S} \to S $ be a Riemannian covering (a covering map which
	is also a local isometry) and $ u,\,v\in UTS $. 
	Suppose that $ dq \colon \big(T\te{S}_{\te{p}},\te{u}\big) \to
	\big(TS_p,u\big) $ preserves the chosen orientation of these tangent planes.
	Then $ \te{\ga} \mapsto q\circ \te{\ga} $ yields a homeomorphism
	\begin{equation*}
		\bdu_{\te{v}\in dq^{-1}(v)}\sr C\te{S}_{\ka_1}^{\ka_2}(\te{u},\te{v}) \home 
		\sr CS_{\ka_1}^{\ka_2}(u,v).
	\end{equation*}
\end{lem}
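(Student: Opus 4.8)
The plan is to exhibit the assignment $ \te{\ga} \mapsto q \circ \te{\ga} $ together with the operation of lifting curves as mutually inverse continuous bijections. First I would check that $ \te{\ga} \mapsto q\circ\te{\ga} $ carries each summand $ \sr C\te{S}_{\ka_1}^{\ka_2}(\te{u},\te{v}) $ into $ \sr CS_{\ka_1}^{\ka_2}(u,v) $. Regularity of $ q\circ\te{\ga} $ is immediate, since $ q $ is a local diffeomorphism; and because $ dq(\te{u})=u $ and $ dq(\te{v})=v $, condition (i) of \dref{D:Curve} holds for $ q\circ\te{\ga} $. The real content is condition (ii): one must show that $ q\circ\te{\ga} $ and $ \te{\ga} $ have the \emph{same} signed curvature at every parameter.

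For this I would use that a Riemannian covering is by hypothesis a local isometry, so $ dq $ preserves the metric, the Levi--Civita connection, the norms of velocity vectors, and commutes with covariant differentiation along curves, i.e.~$ \frac{D}{dt}\big(dq\circ V\big)=dq\big(\frac{D}{dt}V\big) $ for any field $ V $ along $ \te{\ga} $. In particular $ \ta_{q\circ\te{\ga}}=dq\circ\ta_{\te{\ga}} $. The delicate point is the unit normal: by \dref{D:Curve} the sign of $ \no $ is pinned down by parallel translating the frame $ (\ta,\no) $ back to the basepoint and requiring it to be positively oriented there. Since $ dq $ carries the chosen orientation of $ T\te{S}_{\te{p}} $ to that of $ TS_p $ by hypothesis, and since $ q $ respects parallel translation, positively oriented orthonormal frames correspond; hence $ \no_{q\circ\te{\ga}}=dq\circ\no_{\te{\ga}} $, with no sign ambiguity. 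Substituting these identities into \eqref{E:curvature} yields $ \ka_{q\circ\te{\ga}}\equiv\ka_{\te{\ga}} $, so the curvature bounds are preserved and the map is well defined. This orientation bookkeeping is precisely where the hypothesis on $ dq $ enters, and it is the step I expect to demand the most care.

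Next I would construct the inverse via the unique path-lifting property of covering maps. Given $ \ga\in\sr CS_{\ka_1}^{\ka_2}(u,v) $, there is a unique lift $ \te{\ga} $ with $ \te{\ga}(0)=\te{p} $; its initial velocity satisfies $ dq\big(\dot{\te{\ga}}(0)\big)=\dot\ga(0) $, and since $ dq_{\te{p}} $ is an isomorphism with $ dq(\te{u})=u $, we obtain $ \ta_{\te{\ga}}(0)=\te{u} $. Setting $ \te{v}:=\ta_{\te{\ga}}(1) $, which lies in $ dq^{-1}(v) $, places $ \te{\ga} $ in exactly one summand of the disjoint union, and by the curvature identity above $ \te{\ga} $ obeys the same bounds as $ \ga $, so $ \te{\ga}\in\sr C\te{S}_{\ka_1}^{\ka_2}(\te{u},\te{v}) $. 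Because the lift starting at $ \te{p} $ is unique, two curves with distinct terminal vectors $ \te{v} $ can never have the same image $ \ga $; hence $ \te{\ga}\mapsto q\circ\te{\ga} $ is injective on the entire disjoint union, while surjectivity is exactly the existence of the lift. Thus the two operations are inverse bijections.

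Finally I would verify continuity in both directions for the $ C^r $ topologies. Post-composition with the fixed smooth map $ q $ is $ C^r $-continuous, giving continuity of $ \te{\ga}\mapsto q\circ\te{\ga} $. For the lifting map, local triviality of the covering lets one write $ \te{\ga} $, over small subintervals of $ [0,1] $, as $ \sigma\circ\ga $ for smooth local sections $ \sigma $ of $ q $; compactness of $ [0,1] $ together with the smoothness of these sections then yields $ C^r $-continuity of $ \ga\mapsto\te{\ga} $. Consequently the map is a homeomorphism onto $ \sr CS_{\ka_1}^{\ka_2}(u,v) $, as claimed.
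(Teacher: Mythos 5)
Your proposal is correct and follows essentially the same route as the paper's proof: the projection $ \te{\ga} \mapsto q \circ \te{\ga} $ and the lift starting at $ \te{p} $ are exhibited as mutually inverse, with the key point being that $ dq $, as a local isometry, carries unit tangents to unit tangents and unit normals to unit normals (the orientation hypothesis at $ \te{p} $ fixing the sign), whence $ \ka_{\te{\ga}} = \ka_{q\circ\te{\ga}} $ by \eqref{E:curvature}. The only minor differences are cosmetic: the paper pins down the sign of the normal by a continuity argument (agreement at $ t=0 $ propagates to all $ t $) where you invoke compatibility of $ dq $ with parallel translation, and the paper leaves the $ C^r $-continuity of the lifting map implicit, which you verify explicitly via local sections.
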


\begin{proof}
	Let $ \ga \in \sr CS_{\ka_1 }^{\ka_2 }(u,v) $ and $ \te{\ga} \colon [0,1]
	\to \te{S} $ be its lift to $ \te{S} $ starting at $ \te{p} $. Since $ dq $
	is an isometry, 
	\begin{equation*}
		dq(\ta_{\te{\ga}}) = \ta_\ga \quad \text{and} \quad 
		dq(\no_{\te{\ga}}) = \pm \no_\ga.
	\end{equation*}
	Moreover,  $ dq(\no_{\te{\ga}}(0)) = \no_{\ga}(0) $ by the hypothesis
	regarding orientations, hence $ dq(\no_{\te{\ga}}) = \no_\ga $ by
	continuity.  Now by \eqref{E:curvature},  $ \ka_{\te{\ga}} = \ka_\ga $.
	Therefore $ \te{\ga} \in \sr C\te{S}_{\ka_1 }^{\ka_2 }(\te{u},\te{v}) $ for
	some lift $ \te{v} $ of $ v $.  Conversely, if $ \te{\ga} \in \sr
	C\te{S}_{\ka_1 }^{\ka_2 }(\te{u},\te{v}) $, then $ \ga = q\circ \te{\ga} \in
	\sr CS_{\ka_1 }^{\ka_2 }(u,v) $ by the same reasons.  Since projection and
	lift (starting at $ \te{p} $) are inverse operations, the asserted
	homeomorphism holds.
\end{proof}

This result is especially useful when $ S $ is a space form (e.g., a hyperbolic
surface); for then $ S $ is the quotient by a discrete group of isometries of
the model simply-connected space of the same curvature, which is much more
familiar. The lemma also furnishes a reduction to the orientable case by taking
$ \te{S} $ to be the two-sheeted orientation covering of $ S $.

\begin{exr}\label{X:symmetric}
	Suppose that $ (\ka_1,\ka_2) $ is symmetric about 0. Then the conclusion of 
	\lref{L:covering} holds regardless of whether $ dq $ preserves orientation at $
	\te{p} $. (\tit{Hint:} See the remark following \dref{D:Curve}.)
\end{exr}

\begin{urmk}
	If closed or half-open intervals were used instead in \dref{D:Curve}, then
	substantial differences would only arise in marginal cases. For instance, in
	the situation of \lref{L:unattainable}, if $ v $ is tangent to $ \bd R $,
	then $ \sr C_{\ka_1 }^{\ka_2 }(u,v) $ is empty, while $ \bar{\sr C}_{\ka_1
	}^{\ka_2 }(u,v) $ may not be. The original definition is more convenient to
	work with since the resulting spaces are Banach manifolds;  compare Example
	1.1 in \cite{SalZueh1}. 
\end{urmk}

\begin{thm}\label{T:compact} 
	Let $S$ be a compact connected surface.  Then 
	$ \sr CS_{\kappa_1}^{\kappa_2}(u,v)\neq \emptyset $ for any choice of
	$\ka_1<\ka_2$ and $u,\,v\in UTS$.
\end{thm}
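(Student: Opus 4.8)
The plan is to transport everything to the unit tangent bundle $UTS$ and read off curves as trajectories of a control system, then prove \emph{exact} reachability. A unit-speed curve on $S$ with curvature $c(t)$ is precisely the projection of a trajectory of the time-dependent vector field $X_0+c(t)\,V$ on $UTS$, where $X_0$ is the geodesic spray and $V$ generates the rotation of the fibres. For a constant value I write $X_c=X_0+cV$, with flow $\phi^c_t$; all these fields are complete because $S$ compact makes $UTS$ compact. Let $A_u\subseteq UTS$ be the set of endpoints $\ta_\ga(1)$ as $\ga$ ranges over $\sr CS_{\ka_1}^{\ka_2}(u,\cdot)$. A curve with $\ta_\ga(1)=v$ is exactly an element of $\sr CS_{\ka_1}^{\ka_2}(u,v)$, so it suffices to show $v\in A_u$. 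Concatenation of admissible curves shows that $A_u$ is invariant under every forward flow $\phi^c_t$ ($t\ge 0$, $c\in(\ka_1,\ka_2)$); a concatenation is only piecewise $C^2$, but since the curvature band is open its corner can be smoothed without leaving $(\ka_1,\ka_2)$ (cf.~\lref{L:smoothie}), so every point produced by concatenation is genuinely realized in $\sr CS_{\ka_1}^{\ka_2}$.

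First I would record two structural facts. The endpoint map $\ga\mapsto\ta_\ga(1)$ is open (cf.~\lref{L:submersion}), so $A_u$ is open, and it is nonempty because short constant-curvature arcs are admissible. Next, the family $\{X_c\}$ is bracket generating: for $c\neq c'$ the linear span of $X_c,X_{c'}$ equals that of $X_0,V$, and
\[
[X_c,X_{c'}]=(c'-c)\,[X_0,V],
\]
while $X_0,V,[X_0,V]$ frame the three-dimensional $UTS$. Finally, each $X_c$ is divergence-free for the Liouville measure $\mu$ on $UTS$ (the spray preserves $\mu$, and $V$ generates fibre rotations), so every $\phi^c$ preserves the finite, full-support measure $\mu$.

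The hard part will be to show that $A_u$ has full $\mu$-measure without assuming any ergodicity of the geodesic flow (which fails badly for, e.g., the round sphere). Here is the idea. Since $\phi^c_t(A_u)\subseteq A_u$ for $t\ge 0$ and $\phi^c$ preserves the finite measure $\mu$, the inclusion is an equality modulo $\mu$-null sets; applying $\phi^c_{-t}$ gives $\phi^c_t(A_u)=A_u$ mod null for all $t$ and all $c$. Equivalently, $\mathbf 1_{A_u}$ is a weak (distributional) solution of $X_c f=0$ for every $c$, hence of $X_0 f=Vf=0$. By H\"ormander's hypoellipticity theorem—applicable because $X_0,V,[X_0,V]$ span $T(UTS)$—such an $f$ is smooth, and a smooth function annihilated by the bracket-generating pair $X_0,V$ is constant on the connected manifold $UTS$. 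As $A_u$ is open and nonempty, this forces $\mu(A_u)=\mu(UTS)$, so $A_u$ is dense and of full measure. This step, ruling out a proper invariant set purely from bracket generation plus finite invariant volume, is the crux of the proof.

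To finish, I would run the identical argument on the time-reversed system $\dot x=-X_c(x)$, whose flows $\phi^c_{-t}$ are again complete, bracket generating, and $\mu$-preserving: the set $B_v\subseteq UTS$ of directions \emph{from} which $v$ is reachable with curvature in $(\ka_1,\ka_2)$ is open, nonempty, and of full measure. Two sets of full measure intersect, so I may pick $w\in A_u\cap B_v$. Concatenating a curve from $u$ to $w$ with one from $w$ to $v$ and smoothing the single junction (as above) produces a curve in $\sr CS_{\ka_1}^{\ka_2}(u,v)$, proving that this space is nonempty.
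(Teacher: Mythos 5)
Your proposal is correct in substance, but it reaches the conclusion by a genuinely different route from the paper's. The two proofs share the same infrastructure --- curves are read as trajectories of the fields $X_c=X_0+cV$ on $UTS$ (the paper's $Y_\ka=Y_0+\ka Z$), reachable sets are open by \lref{L:submersion}, corners of concatenations are smoothed via \cref{L:dense}/\lref{L:smoothie}, and every $X_c$ preserves the Liouville measure --- but the core mechanisms diverge. The paper never invokes bracket generation or any PDE theory: it runs an elementary combinatorial argument (the transitive relation $u\prec v$, a \emph{minimal} finite subcover $F_{u_1},\dots,F_{u_m}$ of the compact $UTS$, and a single volume-preserving map $G$ built from one fixed curvature function, which would map a set of finite measure strictly inside itself if $m>1$ --- a Poincar\'e-recurrence style contradiction), and then finishes by transitivity through the dense open set $E_{u_1}$. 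You instead prove \emph{exact controllability} outright: forward invariance of the open reachable set $A_u$ plus finiteness of the invariant measure makes $\mathbf{1}_{A_u}$ flow-invariant mod null for every $c$, hence a distributional solution of $X_0f=Vf=0$; hypoellipticity of $X_0^2+V^2$ (H\"ormander, legitimate since $X_0,\,V,\,[X_0,V]$ frame $UTS$) upgrades $\mathbf{1}_{A_u}$ to a smooth, hence constant, function, so $A_u$ has full measure; intersecting with the analogous backward set $B_v$ finishes. This is essentially Lobry's theorem on controllability of conservative bracket-generating systems: it buys generality (the argument works verbatim for any bracket-generating family of complete volume-preserving fields on a connected finite-volume manifold, in any dimension), at the price of invoking H\"ormander's theorem, whereas the paper's route is self-contained and uses nothing beyond openness, compactness and volume preservation.

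One patch is needed: your rotation field $V$ is globally defined on $UTS$ only when $S$ is orientable (on a nonorientable surface ``multiplication by $J$'' makes no global sense, and signed curvature requires the convention of \S\ref{S:general}). Since the theorem allows nonorientable $S$, begin as the paper does: pass to the orientation double covering, which is compact and connected when $S$ is nonorientable, and use \lref{L:covering} to transport nonemptiness back down to $S$.
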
 
\begin{proof} 
	By passing to the orientation covering if necessary, it may be assumed that
	$ S $ is oriented.  Let $\ka_1<\ka_2$ be fixed. For $u,\, v\in UTS$, write
	$u\prec v$ if $\sr CS_{\kappa_1}^{\kappa_2}(u,v)\neq \emptyset$. Notice that
	$\prec$ is transitive: Given curves in $ \sr CS_{\ka_1 }^{\ka_2 }(u,v) $ and $
	\sr CS_{\ka_1 }^{\ka_2 }(v,w)  $, their concatenation starts at
	$ u $ and ends at $ w $; its curvature may fail to exist at the point of
	concatenation, but this can be fixed by taking a smooth approximation.
	Let 
	\begin{equation*}
		F_u=\set{v\in UTS}{u \prec v}, \quad 
		E_u=\set{v\in UTS}{u \prec v \textrm{ and } v \prec u}.
	\end{equation*} 
	It is clear from the definition that $F_u\neq \emptyset$ for
	all $u$, and that $F_u$ and $E_u$ are open subsets of $ UTS $
	(cf.~\lref{L:submersion}). Moreover, the
	family $(F_u)_{u \in UTS}$ covers $UTS$.  Indeed, given $v$, we can  find
	$u$ such that $\sr CS_{-\ka_2}^{-\ka_1}(-v,-u)\neq \emptyset$. Reversing the
	orientation of a curve in the latter set, we establish that $\sr
	CS_{\kappa_1}^{\kappa_2}(u,v) \neq \emptyset$, that is, $v\in F_u$. 

	Since $UTS$ is compact, it can be covered by finitely many of the $F_u$. Let
	$UTS=F_{u_1}\cup \dots \cup F_{u_m}$ be a minimal cover. We claim that
	$m=1$.

	Assume that $m > 1$. If $u_i \prec u_j$ then $F_{u_i} \sups F_{u_j}$, and
	therefore by minimality $i = j$. Since $u_i \in UTS$ and $u_i \notin
	F_{u_j}$ for $j \ne i$,  we deduce that $u_i \in F_{u_i}$ for each $i$. The
	open sets $E_{u_i}$ are thus nonempty and disjoint. On the other hand,
	every $F_{u_i}$ must intersect some $F_{u_j}$ with $j\neq i$, as $UTS$ is
	connected. Choose $i\neq j$ such that $F_{u_i} \cap F_{u_j}\neq \emptyset$.
	It is easy to see that $F_{u_i} \cap F_{u_j}$ must be disjoint from
	$E_{u_i}$. Thus, if $V$ is the interior of $F_{u_i} \ssm E_{u_i}$, then
	$V\neq \emptyset$. We will obtain a contradiction from this. By definition,
	there exist $u_\ast \in E_{u_i}$, $v_\ast \in V$ and $\gamma_\ast \in \sr
	CS_{\kappa_1}^{\kappa_2}(u_\ast,v_\ast)$.  Let $\ga_\ast\colon [0,L]\to S$ be
	parametrized by arc-length and $\ka_\ast\colon [0,L]\to (\ka_1,\ka_2)$
	denote its curvature.

	The tangent bundle $ TS $ has a natural volume form coming from the
	Riemannian structure of $S$. A theorem of Liouville states that the geodesic
	flow preserves volume in $ TS $. Since $S$ is oriented, given $\theta\in \R$
	we may define a volume-preserving bundle automorphism on $ UTS $ by
	$w\mapsto \cos\theta w+\sin\theta J(w)$ (where $ J $ is ``multiplication by
	$ i $''). Let $Y_0$ and $Z$ be the vector
	fields on $ UTS $ corresponding to the geodesic flow and to counterclockwise
	rotation, respectively. Then for any $\kappa\in \R$, the vector field
	$Y_\kappa = Y_0 + \kappa Z$ defines a volume-preserving flow on $UTS$; the
	projections of its orbits on $S$ are curves parametrized by arc-length of
	constant curvature $\kappa$. 

	By definition, the open set $V$ is forward-invariant under each of these
	flows for $\ka\in (\ka_1,\ka_2)$. Define a map $G\colon UTS\to UTS$ as
	follows: Given $u\in UTS$, $G(u) = \ta_\eta(L)$, where $\eta\colon [0,L]\to
	S$ is the unique curve in $\sr CS_{\kappa_1}^{\kappa_2}(u,\cdot)$,
	parametrized by arc-length, whose curvature is $\ka_\ast$. Then $G$ must be
	volume-preserving, $G(V)\subs V$, but there exists a neighborhood of
	$u_\ast$ contained in $E_{u_i}$ which is taken by $G$ to a neighborhood of
	$v_\ast$ contained in $V$, a contradiction.

	We conclude that $m=1$, so that $ UTS = F_{u_1} $. Furthermore, $F_{u_1}\ssm
	E_{u_1}$ must have empty interior by the preceding argument. Hence
	$E_{u_1}$ is a dense open set in $UTS$. Let $u,\,v\in UTS$ be given. Since
	$F_u$ is open, there exists $ v_1\in F_u \cap  E_{u_1} $. Then $ u\prec
	v_1 $. Since $v_1\prec u_1$ and $u_1\prec v$, we deduce that $u\prec v$; in
	other words, $\sr CS_{\kappa_1}^{\kappa_2}(u,v)\neq \emptyset$.  
\end{proof}

\subsection*{Spaces of closed curves without basepoints}	

Just as in algebraic topology one considers homotopies with and without
basepoint conditions, one can also study the space of all smooth closed curves
on $ S $ with curvature in an interval $ (\ka_1,\ka_2) $ but no
restrictions on the initial and final unit tangents. Let this space be denoted
by $ \sr CS_{\ka_1}^{\ka_2} $. In some regards this class may seem more
fundamental than its basepointed version, the class of spaces $ \sr CS_{\ka_1
}^{\ka_2 }(u,v) $ considered thus far.  However, in the Hirsch-Smale theory of
immersions, such basepoint conditions arise naturally. For instance, thm.~C of
\cite{Smale} states that $ \sr CS_{-\infty}^{+\infty}(u,u) $ is
(weakly) homotopy equivalent to the loop space $ \Om(UTS)(u) $ consisting of all
loops in $ UTS $ based at $ u $. Moreover, even if one is interested only in
closed curves, it is often helpful to study $ \sr CS_{\ka_1 }^{\ka_2 } $
by lifting its elements to the universal cover of $ S $, and these lifts need
not be closed. A further point is provided by the following result.  Recall that
$ UTS $ is diffeomorphic to $ \R^2 \times \Ss^1 $ if $ S = \R^2 $ or $ S = \Hh^2
$, and to $ \SO_3 \home \RP^3 $ if $ S=\Ss^2 $. 

\begin{lem}\label{L:closed}
	Let $ S $ be a simply-connected complete surface of constant curvature and 
	$ u \in UTS $ be arbitrary. Then $ \sr CS_{\ka_1 }^{\ka_2 } $ is
	homeomorphic to $ UTS \times \sr CS_{\ka_1 }^{\ka_2 }(u,u) $. 
\end{lem}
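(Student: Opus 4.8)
The plan is to exploit the homogeneity of $ S $: a free closed curve should be recorded by its basepoint in $ UTS $ together with the based loop obtained by carrying that basepoint to $ u $ by an isometry. Since $ S $ is a simply-connected complete surface of constant curvature, it is isometric to one of $ \R^2 $, $ \Ss^2 $ or $ \Hh^2 $; each is orientable, and I would fix an orientation. Because $ S $ is simply-connected, parallel transport around any loop preserves orientation, so the signed curvature of a closed curve is unambiguously determined by the global orientation and is consistent with the basepointed convention of \dref{D:Curve}; this is what makes the free and based spaces comparable in the first place. With the orientation fixed, the group $ G = \Iso_+(S) $ acts simply transitively on $ UTS $ (for $ \Hh^2 $ this was recorded at the start of \S\ref{S:frame}), so the orbit map $ G \to UTS $, $ g \mapsto dg(u) $, is a diffeomorphism; I would let $ w \mapsto h_w $ be its inverse (so $ dh_w(u) = w $) and set $ g_w = h_w^{-1} $, the unique element of $ G $ with $ dg_w(w) = u $. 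Both $ w \mapsto h_w $ and $ w \mapsto g_w $ are then smooth.

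With this section fixed, I would define
\begin{equation*}
	\Psi \colon \sr CS_{\ka_1}^{\ka_2} \to UTS \times \sr CS_{\ka_1}^{\ka_2}(u,u),
	\quad \Psi(\ga) = \big( \ta_\ga(0),\, g_{\ta_\ga(0)} \circ \ga \big),
\end{equation*}
and
\begin{equation*}
	\Phi \colon UTS \times \sr CS_{\ka_1}^{\ka_2}(u,u) \to \sr CS_{\ka_1}^{\ka_2},
	\quad \Phi(w,\eta) = g_w^{-1} \circ \eta.
\end{equation*}
First I would check that these are well defined. Orientation-preserving isometries preserve signed curvature (compare the opening of the proof of \pref{P:reduction}), so the curvature bounds are respected in both directions. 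For $ \Psi $, closedness of $ \ga $ gives $ \ta_\ga(1) = \ta_\ga(0) $, so $ g_{\ta_\ga(0)} \circ \ga $ has both end-tangents equal to $ dg_{\ta_\ga(0)}(\ta_\ga(0)) = u $ and thus lies in $ \sr CS_{\ka_1}^{\ka_2}(u,u) $; for $ \Phi $, the curve $ g_w^{-1} \circ \eta $ has both end-tangents equal to $ dg_w^{-1}(u) = w $ and is therefore closed.

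That $ \Phi $ and $ \Psi $ are mutually inverse is then a direct computation: $ \Phi(\Psi(\ga)) = g_{\ta_\ga(0)}^{-1} \circ g_{\ta_\ga(0)} \circ \ga = \ga $, and since $ \ta_{g_w^{-1}\circ\eta}(0) = w $ one has $ \Psi(\Phi(w,\eta)) = (w,\, g_w \circ g_w^{-1} \circ \eta) = (w,\eta) $. It remains to see that both maps are continuous in the $ C^r $ topology. Continuity of $ \Phi $ would follow from smoothness of $ w \mapsto g_w^{-1} $, and continuity of $ \Psi $ from continuity of the evaluation $ \ga \mapsto \ta_\ga(0) $ combined with smoothness of $ w \mapsto g_w $; in both cases the crux is the joint continuity of the composition map $ (g,\ga) \mapsto g \circ \ga $ in the $ C^r $ topology.

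The step I expect to be the main obstacle is precisely this last one: showing that a small $ C^r $ perturbation of $ \ga $ changes both the isometry $ g_{\ta_\ga(0)} $ (through the evaluation and $ w \mapsto g_w $) and the composed curve by a small amount, uniformly on $ [0,1] $. I expect this to follow from the smooth dependence of $ g_w $ on $ w $ together with standard $ C^r $ estimates for composition with a diffeomorphism, but it is the point requiring the most care, since the perturbation of the isometry and that of the curve must be controlled simultaneously. Granting it, $ \Psi $ is a homeomorphism with inverse $ \Phi $, which is the assertion.
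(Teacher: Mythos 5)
Your proposal is correct and follows essentially the same route as the paper: the paper's proof defines exactly the same map $ f(\ga) = (\ta_\ga(0),\, g_{\ta_\ga(0)} \circ \ga) $ with inverse $ (v,\eta) \mapsto g_v^{-1} \circ \eta $, using the simply transitive action of the orientation-preserving isometry group on $ UTS $. The continuity point you flag as the main obstacle is simply dismissed in the paper as ``clearly continuous'' (it is routine, since $ w \mapsto g_w $ is smooth and composition with a smoothly varying isometry is continuous in the $ C^r $ topology), so your more cautious treatment adds detail but no new idea.
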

\begin{proof}
	The group of orientation-preserving isometries of such a surface acts simply
	transitively on $ UTS $. Given $ v \in UTS $, let $ g_v $ denote its unique
	element mapping $ v $ to $ u $. Define
	\begin{equation*}
		f \colon \sr CS_{\ka_1 }^{\ka_2 } \to UTS \times \sr C_{\ka_1 }^{\ka_2
		}(u,u),\quad f(\ga) = (\ta(0) , g_{\ta(0)} \circ \ga ).
	\end{equation*}
	This is clearly continuous, and so is its inverse, which is given by $
	(v,\eta) \mapsto g_v^{-1} \circ \eta $.
\end{proof}

Regard the elements of $ \sr CS_{\ka_1 }^{\ka_2 } $ as maps $ \Ss^1 \to S $.
There is a natural projection $ \sr CS_{\ka_1 }^{\ka_2 } \to UTS $ taking a
curve $ \ga\colon \Ss^1 \to S $ to its unit tangent at $ 1 \in \Ss^1 $. This is
a fiber bundle if $ \ka_1=-\infty $ and $ \ka_2=+\infty $ since $ S $ is locally
diffeomorphic to $ \R^2 $ and the group of diffeomorphisms of the latter acts
transitively on $ UT\R^2 $. It may be a fibration in certain other special
cases, as in the situation of \lref{L:closed}, but in general this cannot be
guaranteed. For instance, if $ S=T^2 $ is a flat torus, then the homotopy type
of the fibers $ \sr CS_{-1}^{+1}(u,u) $ of the map $ \sr CS_{-1}^{+1} \to UTS $
is not locally constant; this follows from an example in the introduction of
\cite{SalZueh2}. We believe that little is known about the topology of $ \sr
CS_{\ka_1}^{\ka_2} $ beyond what is implied by \lref{L:closed}.


\section{Spaces of curves with discontinuous curvature}\label{S:discontinuous}

Suppose that $\ga\colon [0,1]\to S$ is a smooth regular curve and, as always, $
TS_{\ga(0)} $ has been oriented. Let $\sig\colon [0,1]\to \R^+$ denote its speed
$ \abs{\dot\ga} $ and $\ka$ its curvature.  Then $\ga$ and $\ta=\ta_\ga\colon
[0,1]\to UTS$ satisfy: 
\begin{equation}\label{E:de}
	\begin{cases} 
	\dot\ga=\sig\ta  \\ 
	\frac{D\ta}{dt}=\sig\ka\? \no
	\end{cases}
	\quad\text{and}\quad \ta(0)=u\in UTS.  
\end{equation}  
Thus, $\ga$ is uniquely determined by $ u $ and the functions $\sig,\,\ka$. One
can define a new class of spaces by relaxing the conditions that $\sig$ and
$\ka$ be smooth.

Let $h\colon (0,+\infty)\to \R$ be the diffeomorphism
\begin{equation*}\label{E:thereason} 
	h(t)=t-t^{-1}.  
\end{equation*} 
For each pair $\ka_1<\ka_2\in \R$, let $h_{\ka_1,\,\ka_2} \colon
(\ka_1,\ka_2)\to \R$ be the diffeomorphism 
\begin{equation*}
	h_{\ka_1,\,\ka_2}(t)=(\ka_1-t)^{-1}+(\ka_2-t)^{-1} 
\end{equation*} 
and, similarly, set 
\begin{alignat*}{10}
	&h_{-\infty,+\infty}\colon \R\to
	\R,\qquad  & &t\mapsto t \\
	&h_{-\infty,\ka_2}\colon
	(-\infty,\ka_2)\to \R,\qquad &  &t\mapsto t+(\ka_2-t)^{-1} \\
	&h_{\ka_1,+\infty}\colon (\ka_1,+\infty)\to \R, \qquad &
	&t\mapsto t+(\ka_1-t)^{-1}. 
\end{alignat*} 
Notice that all of these functions are monotone increasing, hence so are their
inverses.  Moreover, if $\hat{\ka}\in L^2[0,1]$, then
$\ka=h_{\ka_1,\ka_2}^{-1}\circ \hat\ka\in L^2[0,1]$ as well. This is obvious if
$(\ka_1,\ka_2)$ is bounded, and if one of $\ka_1,\ka_2$ is infinite, then it is
a consequence of the fact that $h_{\ka_1,\ka_2}^{-1}(t)$ diverges linearly to
$\pm \infty$ with respect to $t$.  In what follows, $\L$ denotes the separable
Hilbert space $L^2[0,1]\times L^2[0,1]$.  
\begin{dfn}[admissible curve]
	A curve $\ga\colon [0,1]\to S$ is $(\ka_1,\ka_2)$-\tdfn{admissible} if there
	exists $(\hat \sig,\hat \ka)\in \L$ such that $\ga$ satisfies \eqref{E:de}
	with
	\begin{equation}\label{E:Sobolev} 
		\sig=h^{-1}\circ \hat{\sig}\text{\quad and\quad }
		\ka=\?h^{-1}_{\ka_1,\,\ka_2}\circ \hat{\ka}.  
	\end{equation}
	When it is not important to keep track of the bounds $\ka_1,\ka_2$, we will
	simply say that $\ga$ is \tdfn{admissible}.  
\end{dfn}

The system \eqref{E:de} has a unique solution for any $(\hat \sig,\hat \ka)\in
\L$ and $u\in UTS$. To see this, we use coordinate charts for $TS$ derived from
charts for $S$ and apply thm.~C.3 on p.~386 of \cite{Younes} to the resulting
differential equation, noticing that $S$ is smooth and $\sig,\, \ka\in
L^2[0,1]\subs L^1[0,1]$.  Furthermore, if we assume that $ S $ is complete, then
the solution is defined over all of $[0,1]$. The resulting maps $\ga\colon
[0,1]\to S$ and $\ta\colon [0,1]\to TS$ are absolutely continuous (see p.~385 of
\cite{Younes}), and so is $ \no $. Using that $\gen{\ta,\no}\equiv 0$
and differentiating, we obtain, in addition to \eqref{E:de}, that 
\begin{equation*}
	\frac{D\no}{dt}=-\sig \ka\?\ta \text{\quad and \quad} 
	\abs{\ta(t)}=\abs{\no(t)}=\abs{u}=1\ \
	\text{for all $t\in [0,1]$.}
\end{equation*} 
Therefore, $\sig=\abs{\dot\ga}$, $\ta_\ga=\ta$ and $\no_\ga=\no$. It is thus  
natural to call $ \sig $ and $\ka$ the \tdfn{speed} and
\tdfn{curvature} of $\ga$, even though $ \sig,\,\ka\in L^2[0,1] $.

\begin{urmk} 
	Although $\dot \ga=\sig\ta$ is, in general, defined only almost
	everywhere on $[0,1]$, if we reparametrize $\ga$ by arc-length then it
	becomes a regular curve, because $\ga'=\ta$ is continuous. It is helpful to 
	regard  admissible curves simply as regular curves whose curvatures
	are defined a.e..
\end{urmk}

\begin{dfn}\label{D:loose} 
	For $u\in UTS$, let $\sr
	LS_{\ka_1}^{\ka_2}(u,\cdot)$ be the set of all $(\ka_1,\ka_2)$-admissible
	curves $\ga\colon [0,1]\to S$ with $\ta_\ga(0)=u$. 
\end{dfn}
If $ S $ is complete, then this set is identified with $\L$ via the
correspondence $\ga\leftrightarrow (\hat \sig,\hat \ka)$, thus furnishing $\sr
LS_{\ka_1}^{\ka_2}(u,\cdot)$ with a trivial Hilbert manifold structure. If $ S $
is not complete, then $ \sr LS_{\ka_1}^{\ka_2}(u,\cdot) $ is some mysterious
open subset of $ \L $.  However, we still have the following. 

\begin{lem}\label{L:mysterious}
	For all $ u \in UTS $, $ \sr LS_{\ka_1}^{\ka_2}(u,\cdot) $ is homeomorphic
	to $ \L $.
\end{lem}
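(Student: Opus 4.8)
The plan is to exhibit $\sr LS_{\ka_1}^{\ka_2}(u,\cdot)$ as an open subset of the Hilbert space $\L$ and then appeal to the classification of Hilbert manifolds. By the discussion preceding the statement, the correspondence $\ga\leftrightarrow(\hat\sig,\hat\ka)$ identifies this space with the set $U\subs\L$ of those $(\hat\sig,\hat\ka)$ for which the solution of \eqref{E:de} exists on all of $[0,1]$; this $U$ is open, and it equals all of $\L$ when $S$ is complete. In every case $U$ is an open subset of the separable Hilbert space $\L$, hence a separable Hilbert manifold, so by \lref{L:Hilbert}\?(b) it suffices to prove that $U$ is weakly contractible.

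So let $\Ss^k\to U$, $w\mapsto\ga_w$, be continuous. Since $(w,t)\mapsto\ga_w(t)$ is continuous on the compact product $\Ss^k\times[0,1]$, the curves $\ga_w$ all lie in a single compact set $K\subs S$; this is the feature that lets us bypass the possible incompleteness of $S$. Let $p$ be the basepoint of $u$ and fix $r_0>0$ so small that the closed geodesic ball $\bar B$ of radius $r_0$ about $p$ is compact and contained in $S$. Any curve issuing from $p$ of length less than $r_0$ stays within $\bar B$, and any such curve of constant speed is then confined to the compact set $\bar B\subs S$, hence defined on all of $[0,1]$; in particular it belongs to $U$.

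I would now contract the family $w\mapsto\ga_w$ to a point inside $U$ in three stages. First, cut: restricting each $\ga_w$ to $[0,\epsilon]$ and rescaling the domain to $[0,1]$ gives a homotopy (as $\epsilon$ decreases from $1$) whose curves have images contained in those of the $\ga_w$, hence in $K\subs S$, so it never leaves $U$; and because the $\hat\sig_w$ range over a bounded subset of $L^2[0,1]$ and $h^{-1}$ has linear growth, the speeds $h^{-1}\circ\hat\sig_w$ are bounded in $L^2$, so $\int_0^\epsilon h^{-1}\circ\hat\sig_w\le \|h^{-1}\circ\hat\sig_w\|_{L^2}\sqrt\epsilon$ tends to $0$ uniformly in $w$; fixing $\epsilon$ small enough, every cut curve has length less than $r_0$ and thus lies in $\bar B$. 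Second, normalize: reparametrize each cut curve by a multiple of arc length to make its speed constant; this preserves images (so we stay in $\bar B\subs S$, hence in $U$) and, interpolating the reparametrizing map with the identity, is realized by a homotopy within $U$. Third, contract: a constant-speed admissible curve from $u$ corresponds to a pair $(\hat\sig,\hat\ka)$ with $\hat\sig$ constant, and writing that constant as $h(L)$ with $L$ the length identifies the collection of all such curves of length less than $r_0$ with $(0,r_0)\times L^2[0,1]$, a subset of $U$ by the previous paragraph; the straight-line homotopy $\big((L,\hat\ka),s\big)\mapsto\big((1-s)L+s\tfrac{r_0}{2},\,(1-s)\hat\ka\big)$ contracts it within $U$ to the single curve of constant speed $\tfrac{r_0}{2}$ and constant curvature $\tfrac12(\ka_1+\ka_2)$. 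Concatenating the three stages null-homotopes $w\mapsto\ga_w$ inside $U$, so $U$ is weakly contractible and $U\home\L$ by \lref{L:Hilbert}\?(b).

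The main obstacle is analytic rather than conceptual: the cut-and-rescale and the constant-speed reparametrization must be shown to depend continuously on the curve in the $L^2\times L^2$-topology of $\L$, and the intermediate curves must be checked to solve \eqref{E:de} on all of $[0,1]$ within $S$. These reduce to continuity estimates for substitution operators of the form $\hat\sig\mapsto h\big(c\,h^{-1}(\hat\sig(c\,\cdot))\big)$ on $L^2$, of the same flavor as those underpinning the Hilbert-manifold structure in \S\ref{S:discontinuous}, and I expect them to be routine but laborious. The conceptual point, by contrast, is the use of compactness in the first stage: it forces the entire family into one compact subset of $S$, reducing the a priori mysterious open set $U$ to the complete situation, where shrinking curves into a geodesic ball is unobstructed.
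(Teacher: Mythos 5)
Your proposal is correct and takes essentially the same route as the paper: the paper's proof (by reference to Lemma~\ref{L:contractible}\,(a)) also reduces to weak contractibility via Lemma~\ref{L:Hilbert}, reparametrizes the compact family by constant speed, shrinks the curves to a common length below the injectivity radius at the basepoint of $u$ (which is exactly what neutralizes possible incompleteness of $S$), and then contracts using convexity of the space of curvature functions. Your cut/normalize/contract stages are the same steps in a slightly different order, with the same deferred (and genuinely routine) continuity checks for the reparametrization operators on $\L$.
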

\begin{proof}
	The proof is almost identical to that of \lref{L:contractible}\?(a). Given a
	family of curves indexed by a compact space, first reparametrize all curves
	by constant speed and shrink them to a common length $ \la $ smaller than the
	injectivity radius of $ S $ at the basepoint of $ u $. Now each curve is
	completely determined by its curvature, and conversely any $ L^2
	$-function $ \hat \ka \colon [0,1] \to \R $ determines a unique curve of
	constant speed $ \la $ having $ u $ for its initial unit tangent
	vector, via \eqref{E:Sobolev}. But the set of all such functions is
	convex. Thus $ \sr LS_{\ka_1}^{\ka_2}(u,\cdot) $ is weakly contractible,
	hence homeomorphic to $ \L $ by \lref{L:Hilbert}\?(a).
\end{proof}

\begin{lem}\label{L:submersion} 
	Let $ S $ be a surface and define $F\colon \sr
	LS_{\ka_1}^{\ka_2}(u,\cdot)\to UTS$ by $\ga\mapsto \ta_\ga(1)$. Then $F$ is
	a submersion, and consequently an open map.  
\end{lem}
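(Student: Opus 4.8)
The plan is to show that the differential $dF_\ga\colon\L\to T_{\ta_\ga(1)}UTS$ is onto for every $\ga\in\sr LS_{\ka_1}^{\ka_2}(u,\cdot)$; as its kernel then has finite codimension (at most $\dim UTS=3$) it is automatically complemented, so $F$ is a genuine submersion, and a submersion is in particular an open map. Here $F$ is smooth by the smooth dependence of the solution of \eqref{E:de} on its $L^2$ data (cf.\ \cite{Younes}). I would first move the problem onto the unit tangent bundle. Under the chart $\ga\leftrightarrow(\hat\sig,\hat\ka)$ of \dref{D:loose}, and since \eqref{E:Sobolev} is a diffeomorphism, I may use $(\sig,\ka)$ as coordinates, the variation $(\sig,\ka)\mapsto(\sig+\varepsilon a,\ka+\varepsilon b)$ ranging over all $a,b\in L^2$. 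Read on $UTS$, equation \eqref{E:de} says that the lift $\ta=\ta_\ga$ solves the non-autonomous flow
\[
	\dot\ta(t)=X_t(\ta(t)),\qquad X_t=\sig(t)\,\big(Y_0+\ka(t)\,Z\big),\qquad\ta(0)=u,
\]
where $Y_0$ is the geodesic spray on $UTS$ and $Z$ generates the vertical rotation $w\mapsto\cos\theta\,w+\sin\theta\,Jw$ (the two fields of the proof of \tref{T:compact}). Thus $F$ is the time-one endpoint map of a control system, and everything below happens along the single curve $\ga$, which lies in the domain of $F$ by hypothesis; completeness of $S$ is never used.

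Next I would compute $dF$ by variation of parameters. Writing $\Pi_t\colon T_{\ta(t)}UTS\to T_{\ta(1)}UTS$ for the differential of the flow from time $t$ to time $1$, the infinitesimal perturbation of $X_t$ caused by $(a,b)$ is $aY_0+(a\ka+\sig b)Z$, so Duhamel's formula gives
\[
	dF(a,b)=\int_0^1\Pi_t\,\big(a(t)\,Y_0+(a(t)\ka(t)+\sig(t)b(t))\,Z\big)(\ta(t))\,dt.
\]
Since $\sig>0$ and $a,b$ are free in $L^2$, the coefficients of $Y_0$ and of $Z$ are two independent arbitrary $L^2$ functions; hence a covector $\lambda\in T^*_{\ta(1)}UTS$ annihilates the image of $dF$ exactly when, setting $\lambda(t):=\Pi_t^*\lambda$, one has $\langle\lambda(t),Y_0(\ta(t))\rangle=0$ and $\langle\lambda(t),Z(\ta(t))\rangle=0$ for a.e.\ $t$.

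I would then kill such a $\lambda$ using the bracket geometry of $UTS$. Both functions are absolutely continuous in $t$, and the adjoint identity $\frac{d}{dt}\langle\lambda(t),V(\ta(t))\rangle=\langle\lambda(t),[X_t,V](\ta(t))\rangle$ holds for every smooth field $V$. Taking $V=Z$ and using $[X_t,Z]=\sig(t)\,[Y_0,Z]$, the vanishing of the second function forces $\sig(t)\,\langle\lambda(t),[Y_0,Z](\ta(t))\rangle=0$ a.e.; as $\sig>0$ and the remaining factor is continuous in $t$, it vanishes identically. Now $Y_0$, $Z$ and $[Y_0,Z]$ frame $UTS$ at every point: $Z$ spans the vertical line, while $Y_0$ and $[Y_0,Z]$ have horizontal projections $\ta$ and a nonzero multiple of $J\ta$, which are independent. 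Evaluating the three relations at $t=1$, where $\Pi_1=\mathrm{id}$ and $\lambda(1)=\lambda$, gives $\lambda=0$. Hence the image of $dF$ is all of $T_{\ta(1)}UTS$.

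The geometric core, and the step I expect to be the real obstacle, is reaching the transverse direction. Varying $\sig$ near $t=1$ slides the endpoint $\ta(1)$ along $Y_0$ and varying $\ka$ near $t=1$ rotates it along $Z$, but sideways motion—the horizontal lift of $J\ta$—is produced by no pointwise control and appears only through the failure of the geodesic flow to commute with rotation, i.e.\ from $[Y_0,Z]\notin\mathrm{span}\{Y_0,Z\}$; this bracket-generating condition holds on any surface. The only other point requiring care is that $\sig,\ka$ lie merely in $L^2$, so $\ta$, the flow $\Pi_t$ and the functions above are only absolutely continuous in $t$; the Duhamel and adjoint identities must be read in the Carathéodory sense underlying the existence theory for \eqref{E:de}, but none of the conclusions are affected.
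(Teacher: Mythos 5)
Your argument is correct in substance, but it is a genuinely different proof from the one in the paper. The paper disposes of the lemma by reduction: it cites the flat case $S=\R^2$ (Lemma~1.5 of \cite{SalZueh1}, an explicit computation with the planar endpoint map) and remarks that the general case follows by working in Riemannian normal coordinates, flat to second order, around the basepoint of $\ta_\ga(1)$ --- i.e.\ the Euclidean computation is imported locally near the endpoint. You instead argue intrinsically on $UTS$: you identify $F$ with the endpoint map of the control-affine system $\dot\ta=\sig\,(Y_0+\ka Z)(\ta)$, compute $dF$ by variation of parameters, and eliminate an annihilating covector using the adjoint identity together with the bracket relation $[Y_0,Z]\notin\operatorname{span}\{Y_0,Z\}$, which is exactly the structure equation of the unit tangent bundle ($[Y_0,Z]$ is, up to sign, the horizontal lift of $J\ta$). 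This buys a self-contained proof, uniform over all surfaces with no localization and no appeal to the planar lemma, and it names the geometric mechanism --- bracket generation produces the sideways direction that no pointwise control gives --- which the paper's normal-coordinate reduction leaves implicit. What the paper's route buys is brevity.

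Two steps in your write-up are glossed and need repair, though neither is fatal. First, \eqref{E:Sobolev} is \emph{not} a diffeomorphism in the Fr\'echet category: a composition (Nemytskii) operator $L^2\to L^2$ with a non-affine function is continuous but is not Fr\'echet differentiable at any point, so you may not simply ``use $(\sig,\ka)$ as coordinates'', nor is smoothness of $F$ in the chart $(\hat\sig,\hat\ka)$ an off-the-shelf consequence of \cite{Younes}. The cure: differentiability of $F$ does hold because the controls enter the solution only through integrals (the Nemytskii maps are $C^1$ as maps $L^2\to L^1$, and the endpoint map of a control-affine system is $C^1$ in the $L^1$ topology on controls); the resulting $dF(\hat a,\hat b)$ is your Duhamel formula with $(a,b)=\big((h^{-1})'(\hat\sig)\,\hat a,\,(h^{-1}_{\ka_1,\ka_2})'(\hat\ka)\,\hat b\big)$. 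These pairs sweep only a \emph{dense} subspace of $L^2\times L^2$, not all of it, but that costs nothing: the pairing of $\lambda$ with the Duhamel integral is an $L^2$-continuous functional of $(a,b)$, so its vanishing on a dense subspace still forces $\langle\lambda(t),Y_0(\ta(t))\rangle=\langle\lambda(t),Z(\ta(t))\rangle=0$ a.e., which is all your argument uses. Second, $Z$ --- hence your control system --- is globally defined on $UTS$ only when $S$ is oriented, whereas the lemma allows arbitrary $S$ and the paper's curvature convention uses the orientation parallel-transported along each curve; so one should first pass to the orientation double cover (under the $\sr L$-analogue of \lref{L:covering}, $F$ factors through a local diffeomorphism of unit tangent bundles, so submersivity transfers), or equivalently run the identical argument on the orthonormal frame bundle, where $Y_0$ and $Z$ are canonical and the same structure equations hold.
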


\begin{proof}
	The proof when $ S = \R^2 $ is given in \cite{SalZueh1}, Lemma 1.5. The
	proof in the general case follows by considering Riemannian normal
	coordinates, which are flat at least to second order, in a neighborhood of
	the basepoint of $ \ta_\ga(1) $.
\end{proof}

\begin{dfn}[spaces of admissible curves]\label{D:Lurvespace} 
	Define $\sr LS_{\ka_1}^{\ka_2}(u,v)$ to be the subspace of $\sr
	LS_{\ka_1}^{\ka_2}(u,\cdot)$ consisting of all $ \ga $ such that
	$\ta_\ga(1)=v$.  
\end{dfn}

It follows from (\ref{L:submersion}) that $\sr LS_{\ka_1}^{\ka_2}(u,v)$ is a
closed submanifold of codimension 3 in $\sr LS_{\ka_1}^{\ka_2}(u,\cdot)\home \L$,
provided it is not empty. We have already seen in \cref{C:closed} that such
spaces may indeed be empty, but this cannot occur when $ S $ is compact;
cf.~\tref{T:compact} and \lref{L:dense}.

\subsection*{Relations between spaces of curves}

If $(\ka_1,\ka_2)\subs (\bar \ka_1,\bar \ka_2)$ and $\ga\in \sr
LS_{\ka_1}^{\ka_2}(u,v)$, then we can also consider $\ga$ as a curve in $\sr
LS_{\bar\ka_1}^{\bar\ka_2}(u,v)$. However, the
topology of the former space is strictly finer (i.e., has more open sets) than
the topology induced by the resulting inclusion. 
\begin{lem}\label{L:LtoLinclusion} 
	Let $(\ka_1,\ka_2)\subs (\bar \ka_1,\bar \ka_2)$,  $S$ be a surface and
	$u\in UTS$. Then 
	\begin{equation}\label{E:pair} 
		(\hat \sig,\hat \ka)\mapsto 
		\big(\hat \sig, h_{\bar \ka_1,\bar\ka_2}\circ 
		h_{\ka_1,\ka_2}^{-1}\circ \hat \ka\big)
	\end{equation} 
	defines a continuous injection $j\colon \sr LS_{\ka_1}^{\ka_2}(u,\cdot)\to
	\sr LS_{\bar\ka_1}^{\bar\ka_2}(u,\cdot)$. The actual curves on $S$
	corresponding to these pairs are the same, but $j$ is not a topological
	embedding unless $\bar \ka_1=\ka_1$ and $\bar \ka_2=\ka_2$.  
\end{lem}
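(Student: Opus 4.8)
The plan is to reduce everything to the single real function
$\Psi := h_{\bar\ka_1,\bar\ka_2}\circ h_{\ka_1,\ka_2}^{-1}\colon \R\to\R$, in terms of which $j$ acts only on the curvature coordinate, $j(\hat\sig,\hat\ka)=(\hat\sig,\Psi\circ\hat\ka)$. Since $h_{\ka_1,\ka_2}$ and $h_{\bar\ka_1,\bar\ka_2}$ are monotone increasing bijections onto $\R$, $\Psi$ is a continuous strictly increasing bijection of $\R$; the only property that really matters is its growth at $\pm\infty$, which is governed by whether each endpoint is shared.

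First I would verify that $j$ lands in the target and leaves the underlying curve unchanged. Given the admissible curve $\ga$ with $(\ka_1,\ka_2)$-coordinates $(\hat\sig,\hat\ka)$, its speed $\sig=h^{-1}\circ\hat\sig$ and curvature $\ka=h_{\ka_1,\ka_2}^{-1}\circ\hat\ka$ are fixed functions; applying $h_{\bar\ka_1,\bar\ka_2}^{-1}$ to $\Psi\circ\hat\ka=h_{\bar\ka_1,\bar\ka_2}\circ\ka$ returns $\ka$. Thus $j(\hat\sig,\hat\ka)$ reproduces the same $\sig$ and $\ka$, hence (being the unique solution of \eqref{E:de} with the same $u$) the same curve $\ga$, now viewed with the wider bounds. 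For this to define a point of $\sr LS_{\bar\ka_1}^{\bar\ka_2}(u,\cdot)$ I must check $\Psi\circ\hat\ka\in L^2[0,1]$; here I would observe that as $h_{\ka_1,\ka_2}^{-1}$ pushes its argument toward an endpoint $\ka_i$, the composite $\Psi$ either tends to the finite value $h_{\bar\ka_1,\bar\ka_2}(\ka_i)$ (when $\bar\ka_i\neq\ka_i$) or is asymptotically linear (when $\bar\ka_i=\ka_i$). Either way $\Psi$ grows at most linearly, so the superposition operator $T_\Psi\colon\hat\ka\mapsto\Psi\circ\hat\ka$ maps $L^2[0,1]$ to itself.

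Continuity and injectivity of $j$ then follow formally. With the at-most-linear bound $|\Psi(x)|\le a+b|x|$, the Nemytskii operator $T_\Psi$ is continuous on $L^2[0,1]$ by standard superposition-operator theory, and it is injective because $\Psi$ is a pointwise bijection, so $\Psi\circ f=\Psi\circ g$ a.e.\ forces $f=g$ a.e.; the first coordinate is carried by the identity. Since domain and target carry the subspace topologies from $\L$ via these coordinates (cf.~\lref{L:mysterious}), the statements about $T_\Psi$ are precisely continuity and injectivity of $j=\mathrm{id}\times T_\Psi$.

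The substantive point is the failure to be an embedding when some endpoint is strictly enlarged, where the growth analysis is used in the opposite direction. Assume, say, $\bar\ka_2>\ka_2$ (the case $\bar\ka_1<\ka_1$ is symmetric, and if both endpoints agree then $\Psi=\mathrm{id}$ and $j$ is the identity). Then $\ka_2<+\infty$, $\ka_2\in(\bar\ka_1,\bar\ka_2)$, and $\Psi$ is \emph{bounded} on $[0,+\infty)$, since $\Psi(t)\to h_{\bar\ka_1,\bar\ka_2}(\ka_2)$ as $t\to+\infty$. I would exploit this flattening to build $\hat\ka_n=M_n\,\mathbf 1_{A_n}$ with $M_n\to+\infty$ and $|A_n|=M_n^{-2}$, so that $\|\hat\ka_n\|_{L^2}=1$ (hence $\hat\ka_n\not\to 0$) while $\|\Psi\circ\hat\ka_n-\Psi(0)\|_{L^2}^2=|\Psi(M_n)-\Psi(0)|^2M_n^{-2}\to 0$ by boundedness of $\Psi(M_n)$. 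Fixing a constant speed $\hat\sig$ small enough that the bounded curvature keeps each of these short curves inside a geodesic ball — so that all $(\hat\sig,\hat\ka_n)$ genuinely lie in the domain even when $S$ is incomplete — this shows $j(\hat\sig,\hat\ka_n)\to j(\hat\sig,0)$ although $(\hat\sig,\hat\ka_n)\not\to(\hat\sig,0)$. Hence $j^{-1}$ is discontinuous and $j$ is not a topological embedding. I expect this last construction, together with the bookkeeping that the perturbed curves remain admissible on an arbitrary (possibly incomplete) surface, to be the main obstacle; well-definedness and continuity are routine once the linear-growth estimate for $\Psi$ is in hand.
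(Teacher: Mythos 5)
Your proposal is correct and follows essentially the same route as the paper's proof: both reduce everything to the scalar transition function $g=h_{\bar\ka_1,\bar\ka_2}\circ h_{\ka_1,\ka_2}^{-1}$, obtain continuity from its tame growth (the paper via a bounded-derivative/Lipschitz estimate, you via standard Nemytskii-operator theory with the linear-growth bound), and destroy the embedding property with tall, thin indicator-function spikes in $\hat\ka$, exploiting that $g$ is bounded on $[0,+\infty)$ when $\ka_2<\bar\ka_2$. If anything, your write-up is slightly more careful than the paper's on two minor points: the trivial converse direction (when both endpoints agree, $\Psi=\mathrm{id}$), and the choice of a small constant $\hat\sig$ ensuring the spike curves actually lie in $\sr LS_{\ka_1}^{\ka_2}(u,\cdot)$ when $S$ is incomplete.
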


\begin{proof} 
	It may be assumed that $ S $ is oriented. The curve $\ga$
	corresponding to $(\hat \sig,\hat \ka)\in \sr LS_{\ka_1}^{\ka_2}(u,v)$ is
	obtained as the solution of \eqref{E:de} with
	\begin{equation*}
		\sig=h^{-1}\circ \hat \sig \text{\quad and\quad
		}\ka=h^{-1}_{\ka_1,\,\ka_2}\circ \hat\ka.  
	\end{equation*} 
	The curve $\eta$ corresponding to the right side of \eqref{E:pair} in $\sr
	LS_{\bar\ka_1}^{\bar\ka_2}(u,\cdot)$ is the solution of \eqref{E:de} with
	\begin{equation*}
		\sig=h^{-1}\circ \hat \sig \text{\quad and\quad
		}\ka=h^{-1}_{\bar\ka_1,\,\bar\ka_2}\circ\big(h_{\bar
		\ka_1,\bar\ka_2}\circ h_{\ka_1,\ka_2}^{-1}\circ \hat
		\ka\big)=h^{-1}_{\ka_1,\,\ka_2}\circ \hat\ka.  
	\end{equation*} 
	By uniqueness of solutions, $\ga=\eta$. In particular, $j$ is injective. 

	Set $g=h_{\bar \ka_1,\bar\ka_2}\circ h_{\ka_1,\ka_2}^{-1}$. Observe that
	\begin{equation*}
		\lim_{t\to+ \infty}g'(t)=
		\begin{cases} 
			1 & \text{ if $\bar\ka_2=\ka_2$;} \\ 
			0 & \text{ otherwise;} 
		\end{cases}
		\qquad 
		\lim_{t\to- \infty}g'(t)=
		\begin{cases} 
			1 & \text{ if $\bar\ka_1=\ka_1$;} \\ 
			0 & \text{ otherwise.} 
		\end{cases} 
	\end{equation*} 
	Hence, $\abs{g'}$ is bounded over $\R$. Consequently, there exists $C>0$
	such that
	\begin{equation*}
		\norm{g\circ f_1-g\circ f_2}_2\leq C\norm{f_1-f_2}_2\ \ 
		\text{for any $f_1,\,f_2\in L^2[0,1]$.}
	\end{equation*} 
	We conclude that $j\colon (\hat\sig,\hat\ka)\mapsto
	(\hat\sig,g\circ \hat\ka)$ is continuous.

	Suppose now that $(\ka_1,\ka_2)\psubs (\bar\ka_1,\bar\ka_2)$. No generality
	is lost in assuming that $\ka_2<\bar\ka_2$. Let 
	\begin{equation*}
		m=g(0) \text{\ \ and \ \ } M=g(+\infty)=h_{\bar\ka_1,\bar\ka_2}(\ka_2).
	\end{equation*} 
	Define a sequence of $L^2$ functions $\hat \ka_n\colon [0,1]\to \R$ by: 
	\begin{equation*}
		\quad \hat\ka_n(t)=
		\begin{cases} 
			n & \text{ if $t\in \big[\frac{1}{2}-\frac{1}{2n}\,
			,\,\frac{1}{2}+\frac{1}{2n}\big]$;} \\ 
			0 & \text{ otherwise. } 
		\end{cases}\qquad (n\in \N^+,~t\in [0,1]).  
	\end{equation*} 
	Since $g$ is the composite of increasing functions, $g(t)<g(+\infty)=M$ for
	any $t\in \R$. Therefore,
	\begin{equation*}
		\abs{g\circ \hat\ka_n(t)-m}
		\begin{cases} 
			\leq M-m & \text{ if\, $t\in \big[\frac{1}{2}-\frac{1}{2n}\,,\,
			\frac{1}{2}+\frac{1}{2n}\big]$;} \\ 
			= 0 & \text{ otherwise}.  
		\end{cases} 
	\end{equation*}
	Hence, $\norm{\hat {\ka}_n}_2=\sqrt{n}\to +\infty$ as $n$ increases, while
	$\norm{g\circ \hat \ka_n-m}_2\leq n^{-\frac{1}{2}}(M-m) \to 0$.  We conclude
	that $ j $ is not a topological embedding.  This
	argument may be modified to prove that  $\sr
	LS_{\ka_1}^{\ka_2}(u,v) \inc \sr LS_{\bar\ka_1}^{\bar\ka_2}(u,v)$ is likewise
	not an embedding for any $v$.
\end{proof}

\begin{lem}\label{L:CtoLinclusion} 
	Let $(\ka_1,\ka_2)\subs (\bar \ka_1,\bar \ka_2)$ and $u,\,v\in UTS$. Then
	\begin{equation}\label{E:injection} 
		j\colon \sr CS_{\kappa_1}^{\kappa_2}(u,v)^r\to \sr
		LS_{\bar\kappa_1}^{\bar\kappa_2}(u,v),\quad \ga\mapsto (h\circ \abs{\dot
		\ga} \,,\, h_{\bar\ka_1}^{\bar\ka_2}\circ \ka_\ga)
	\end{equation} 
	is a continuous injection, but not an embedding, for all $r\geq 2$.
	Moreover, the actual curve on $S$ corresponding to $ j(\ga) $ is $\ga$
	itself.  
\end{lem}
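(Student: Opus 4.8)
The plan is to establish the four assertions in turn---that $j(\ga)$ lies in $\sr LS_{\bar\ka_1}^{\bar\ka_2}(u,v)$ with underlying curve $\ga$, that $j$ is injective, that $j$ is continuous, and that $j$ is not a topological embedding---the last being where the real work lies. For the first, given $\ga\in\sr CS_{\ka_1}^{\ka_2}(u,v)^r$ its speed $\sig=\abs{\dot\ga}$ is a positive $C^{r-1}$ function and its curvature $\ka_\ga$ is a $C^{r-2}$ function whose image is a compact subset of $(\ka_1,\ka_2)\subs(\bar\ka_1,\bar\ka_2)$; hence $h\circ\sig$ and $h_{\bar\ka_1,\bar\ka_2}\circ\ka_\ga$ are continuous and in particular belong to $L^2[0,1]$, so $j(\ga)\in\L$. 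To see that the admissible curve determined by this pair through \eqref{E:de} and \eqref{E:Sobolev} is $\ga$ itself, I would observe that $h^{-1}\circ(h\circ\sig)=\sig$ and $h_{\bar\ka_1,\bar\ka_2}^{-1}\circ(h_{\bar\ka_1,\bar\ka_2}\circ\ka_\ga)=\ka_\ga$, so the recovered speed and curvature are exactly those of $\ga$; since $\ga,\ta_\ga,\no_\ga$ already solve \eqref{E:de} with these data, uniqueness of solutions forces the underlying curve to be $\ga$. In particular $\ta_\ga(1)=v$, so $j(\ga)\in\sr LS_{\bar\ka_1}^{\bar\ka_2}(u,v)$, and injectivity follows at once, since $j(\ga)=j(\eta)$ identifies $\ga$ and $\eta$ as the common underlying curve.

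For continuity I would use that on the compact interval $[0,1]$ convergence in $C^r$ (with $r\ge2$) forces uniform convergence of $\dot\ga$ and $\ddot\ga$, hence of $\sig=\abs{\dot\ga}$ and of $\ka_\ga$, the latter being a smooth expression in $\ga,\dot\ga,\ddot\ga$ and the Christoffel symbols with $\sig>0$. As $\ga$ ranges over a $C^r$-neighbourhood of a fixed curve, the values of $\sig$ stay in a fixed compact subinterval of $(0,\infty)$ and those of $\ka_\ga$ in a fixed compact subinterval of $(\bar\ka_1,\bar\ka_2)$, on which $h$ and $h_{\bar\ka_1,\bar\ka_2}$ are Lipschitz; composing and invoking the continuous inclusion $C^0[0,1]\inc L^2[0,1]$ shows that both coordinates of $j(\ga)$ depend continuously on $\ga$.

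The substantive claim is that $j$ is not an embedding, and here I would adapt the spike construction of \lref{L:LtoLinclusion}. Assuming $\sr CS_{\ka_1}^{\ka_2}(u,v)$ is nonempty (otherwise there is nothing to prove), fix $\ga$ in it, which may be taken to have constant speed, and fix $t_*\in(0,1)$. I would produce curves $\ga_n$ whose curvature equals $\ka_\ga$ except on an interval of length $\ell_n\to0$ about $t_*$, where it is raised smoothly to a value $\ka^{(n)}\to\ka_2^{-}$ (keeping $\ka_{\ga_n}$ inside $(\ka_1,\ka_2)$ so that $\ga_n$ is a genuine $C^r$ curve). Because $\ka^{(n)}<\ka_2<\bar\ka_2$, the values of $h_{\bar\ka_1,\bar\ka_2}\circ\ka_{\ga_n}$ remain uniformly bounded, so this coordinate differs from that of $j(\ga)$ only on a set of measure $\ell_n$ by a bounded amount; its $L^2$-norm is thus $O(\sqrt{\ell_n})\to0$, and, the speeds being controlled, $j(\ga_n)\to j(\ga)$ in $\L$. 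On the other hand $\sup_{[0,1]}\abs{\ka_{\ga_n}-\ka_\ga}$ stays bounded below by a positive constant, so $\ga_n\not\to\ga$ in $C^2$, a fortiori not in the finer $C^r$ topology. Thus the images converge while the preimages do not, so $j^{-1}$ is discontinuous on $j\big(\sr CS_{\ka_1}^{\ka_2}(u,v)^r\big)$.

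The main obstacle is that inserting a spike disturbs the terminal datum $\ta_{\ga_n}(1)$, so that the $\ga_n$ first built lie only in $\sr CS_{\ka_1}^{\ka_2}(u,\cdot)$. I would control this by noting that a spike of length $\ell_n$ alters the final frame in $UTS$ by $O(\ell_n)$, and that the endpoint map $\ga\mapsto\ta_\ga(1)$ is a submersion (\lref{L:submersion}, whose proof is equally valid in the $C^r$ setting) and hence open; the $O(\ell_n)$ error can therefore be absorbed by a further curvature correction of sup-norm $O(\ell_n)$ supported away from $t_*$. This correction tends to $0$ in both $C^r$ and $L^2$, so it restores $\ta_{\ga_n}(1)=v$ without affecting either of the two conclusions above, completing the verification of all four assertions.
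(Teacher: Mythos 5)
Your treatment of well-definedness, injectivity, and continuity is correct, and your overall strategy for the non-embedding claim---importing the spike construction of \lref{L:LtoLinclusion} and repairing the terminal condition via the submersion property of $\ga\mapsto\ta_\ga(1)$---is exactly what the paper intends: its entire proof is the sentence that the argument is ``very similar'' to that of \lref{L:LtoLinclusion}, whose last line already signals the fixed-endpoint modification. Your explicit endpoint correction is, if anything, more detailed than what the paper provides.

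There is, however, a genuine gap in the non-embedding step. You raise the curvature on the spike to values $\ka^{(n)}\to\ka_2^{-}$ and justify $j(\ga_n)\to j(\ga)$ by asserting that ``because $\ka^{(n)}<\ka_2<\bar\ka_2$, the values of $h_{\bar\ka_1,\bar\ka_2}\circ\ka_{\ga_n}$ remain uniformly bounded.'' This presupposes the \emph{strict} inequality $\ka_2<\bar\ka_2$, whereas the lemma only assumes $(\ka_1,\ka_2)\subs(\bar\ka_1,\bar\ka_2)$, equality allowed---and the case $(\ka_1,\ka_2)=(\bar\ka_1,\bar\ka_2)$ is precisely the one invoked later in \lref{L:C^2} and \lref{L:smoothie}, and precisely what distinguishes this lemma from \lref{L:LtoLinclusion}, where equal bounds make $j$ an embedding. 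If $\bar\ka_2=\ka_2$, then $h_{\bar\ka_1,\bar\ka_2}(\ka^{(n)})\to+\infty$, the differences on the spike are unbounded, and your $O(\sqrt{\ell_n})$ estimate collapses (it could be salvaged only by coupling the decay of $\ell_n$ to the blow-up of $h_{\bar\ka_1,\bar\ka_2}(\ka^{(n)})$, which you never do; the same defect appears when $\ka_2=+\infty$). The repair is simple and shows the blow-up was never needed: fix once and for all $\ka_*$ with $\max\ka_\ga<\ka_*<\ka_2$ and use spikes of \emph{fixed} height $\ka_*$ and shrinking width $\ell_n$. Then $h_{\bar\ka_1,\bar\ka_2}\circ\ka_{\ga_n}$ is uniformly bounded whether or not the inclusion of intervals is strict, so $j(\ga_n)\to j(\ga)$ in $\L$, while $\sup_{[0,1]}\abs{\ka_{\ga_n}-\ka_\ga}\geq \ka_*-\max\ka_\ga>0$ still obstructs $C^2$ (hence $C^r$) convergence. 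The conceptual point your construction misses is that for the $C^r$-versus-$L^2$ comparison the failure of embedding comes from shrinking the width of a perturbation of bounded height, not from letting the height blow up; the latter mechanism is the one proper to \lref{L:LtoLinclusion}, and it is exactly the mechanism that dies when the bounds coincide.
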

\begin{proof} 
	The proof is very similar to that of (\ref{L:LtoLinclusion}). 
\end{proof}

The following lemmas contain all the results on infinite-dimensional manifolds
that we shall need.\footnote{\lref{L:Hilbert} and a weaker version of
\lref{L:net} have already appeared in \cite{SalZueh}.} 

\begin{lem}\label{L:Hilbert}
	Let $\sr M,\,\sr N$ be (infinite-dimensional) Banach manifolds. Then:	
	\begin{enumerate} 
		\item [(a)] If $\sr M,\,\sr N$ are weakly homotopy equivalent, then they
			are in fact homeomorphic (diffeomorphic if $\sr M,\,\sr N$ are
			Hilbert manifolds). 
		\item [(b)] If the Banach manifold $ \sr M $ and the finite-dimensional
			manifold $ M $ are weakly homotopy equivalent, then $ \sr M $ is
			homeomorphic to $ M \times \L $; in particular, $ \sr M $ and $ M $
			are homotopy equivalent.
		\item [(c)] Let $E$ and $F$ be separable Banach
			spaces.  Suppose $i\colon F\to E$ is a bounded,
			injective linear map with dense image and $\sr M\subs E$ is
			a smooth closed submanifold of finite codimension.  Then $\sr
			N=i^{-1}(\sr M)$ is a smooth closed submanifold of\, $F$
			and $i\colon \sr N\to \sr M$ is a homotopy equivalence.  
	\end{enumerate} 
\end{lem}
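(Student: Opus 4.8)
The plan is to dispatch parts (a) and (b) using the classification theory of separable infinite-dimensional Banach and Hilbert manifolds, and to prove (c) by a transversality argument followed by an approximation argument.

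For (a), I would first promote the weak homotopy equivalence to a genuine homotopy equivalence. Any Banach manifold is metrizable and locally contractible, hence an ANR; by the theorems of Milnor and Palais it therefore has the homotopy type of a CW complex, so Whitehead's theorem turns a weak equivalence between $ \sr M $ and $ \sr N $ into an honest homotopy equivalence. The statement that homotopy-equivalent separable infinite-dimensional Hilbert (resp.~Banach) manifolds are diffeomorphic (resp.~homeomorphic) is exactly the classification theorem of Henderson, Anderson, Kadec and Bessaga--Pe\l{}czy\'nski: each such manifold embeds as an open subset of its model space and is determined up to (diffeo)homeomorphism by its homotopy type. I would simply invoke these, as in \cite{SalZueh}.

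For (b), the observation is that $ M \times \L $ is itself a separable Hilbert manifold, being modeled on $ \R^{\dim M} \times \L \home \L $, and that its projection onto $ M $ is a homotopy equivalence because $ \L $ is contractible. Thus $ \sr M $ and $ M \times \L $ are both weakly homotopy equivalent to $ M $, hence to each other, and part (a) produces the homeomorphism $ \sr M \home M \times \L $. The final clause follows at once from $ M \times \L \iso M $.

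Part (c) carries the real content. First I would verify that $ \sr N = i^{-1}(\sr M) $ is a closed smooth submanifold of $ F $ of the same finite codimension $ k $: locally $ \sr M $ is the zero set of a submersion $ \Phi \colon U \to \R^k $, so $ \sr N $ is locally cut out by $ \Phi \circ i $, and since $ d(\Phi \circ i)_y = d\Phi_{i(y)} \circ i $ has dense image in $ \R^k $ and $ \R^k $ is finite-dimensional, $ \Phi \circ i $ is again a submersion; closedness is immediate from continuity of $ i $. The crux is to prove that $ i \colon \sr N \to \sr M $ is a weak homotopy equivalence. Given a map $ g \colon \Ss^k \to \sr M $, I would approximate it, within $ \sr M $, by a map whose image lies in the dense subset $ i(\sr N) = i(F) \cap \sr M $; this density follows from the density of $ i(F) $ in $ E $ combined with the implicit function theorem applied in the submersion charts above, and the approximant is joined to $ g $ by a short straight-line homotopy inside a tubular neighborhood of $ \sr M $. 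This yields surjectivity of $ i_* $ on $ \pi_k $, and running the same argument on nullhomotopies (maps of $ \Ss^k \times [0,1] $ or disks) gives injectivity. Since $ \sr N $ and $ \sr M $ are ANRs of CW homotopy type, Whitehead's theorem then upgrades this weak equivalence to a genuine homotopy equivalence. I expect the main obstacle to be precisely the simultaneous approximation into $ i(\sr N) $: one must perturb a whole compact family of points of $ \sr M $ into $ i(F) \cap \sr M $ coherently, keeping everything inside the tubular neighborhood where the closing homotopy is defined, and the finite codimension of $ \sr M $ is what makes this feasible.
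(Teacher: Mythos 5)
Your parts (a) and (b) coincide with the paper's own proof: (a) is exactly the combination of Palais (a weak homotopy equivalence between metrizable Banach manifolds is a homotopy equivalence, thm.~15 of \cite{Palais}) with Henderson's classification (cor.~3 of \cite{Henderson}), and (b) is obtained, just as in the paper, by applying (a) to the pair $\sr M$ and $M\times\L$. For part (c) the paper does not give an argument at all --- it cites thm.~2 of \cite{BurSalTom} --- so your attempt to prove it directly is where the real comparison lies, and it contains a genuine gap.

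Your submanifold argument for $\sr N=i^{-1}(\sr M)$ is sound: a dense linear subspace of $\R^k$ is all of $\R^k$, so $\Phi\circ i$ is a submersion (its kernel splits automatically, being closed and of finite codimension). The gap is in the approximation step. You propose to approximate $g\colon \Ss^k\to\sr M$ within $\sr M$ by a map whose image lies in $i(\sr N)=i(F)\cap\sr M$ and to conclude surjectivity of $i_*$ on $\pi_k$. But a continuous map into $\sr M$ whose image happens to lie in $i(\sr N)$ does \emph{not} give a continuous map into $\sr N$: the topology of $\sr N$ is the one induced from $F$, which is strictly finer than the one induced from $E$, and $i\colon\sr N\to\sr M$ is injective but in general not a topological embedding --- this is precisely the phenomenon that Lemmas \ref{L:LtoLinclusion} and \ref{L:CtoLinclusion} of the paper emphasize. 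So ``approximate into $i(\sr N)$, then lift'' is exactly the step that can fail, and the obstacle you flag (coherence of the perturbation, staying inside the tubular neighborhood) is not the real one. The repair, used in \cite{BurSalTom} and visible in the paper's own proof of Lemma~\ref{L:net}, is to force the approximant to take values in finite-dimensional simplices whose vertices are \emph{fixed} points of $i(F)$: one builds small ``neat'' simplices transverse to $\sr M$, perturbs their vertices into the dense subspace, and defines the new map as the unique intersection of each simplex with $\sr M$, so that at every parameter it is a convex combination, with continuous coefficients, of finitely many fixed vectors of $i(F)$. On a finite-dimensional subspace of $F$ the norms of $E$ and $F$ are equivalent, so such a map lifts continuously to $\sr N$. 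The same device is needed for injectivity, where in addition the nullhomotopy must agree with the given map on the boundary sphere exactly, not just approximately --- another point your sketch passes over.
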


\begin{proof} 
	Part (a) follows from thm.~15 in \cite{Palais} and cor.~3
	in \cite{Henderson}. For part (b), apply (a) to $ \sr M $ and $
	\sr N = M \times \L $. Part (c) is thm.~2 in
	\cite{BurSalTom}. 
\end{proof}

\begin{lem}\label{L:net} 
	Let $\L$ be a separable Hilbert space, $D\subs \L$ a
	dense vector subspace,  $L\subs \L$ a submanifold of finite codimension and
	$U$ an open subset of $L$.  Then the set inclusion $D\cap U\to U$ is a weak
	homotopy equivalence.  
\end{lem}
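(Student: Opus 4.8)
The plan is to verify the standard compression criterion for weak homotopy equivalences: the inclusion $D \cap U \inc U$ is a weak homotopy equivalence if and only if, for every $n \geq 0$, each continuous map $f \colon (B^n, \Ss^{n-1}) \to (U, D \cap U)$ is homotopic rel $\Ss^{n-1}$, through maps into $U$, to a map with image in $D \cap U$. (The case $n = 0$ gives surjectivity on $\pi_0$, the case $n=1$ gives injectivity on $\pi_0$ and surjectivity on $\pi_1$, and the higher cases give the isomorphisms on $\pi_n$ for every basepoint.) So I fix such an $f$; since $B^n$ is compact, $f(B^n)$ is a compact subset of $U$.

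As a prototype I would first treat the case $L = \L$, so that $U$ is open in the Hilbert space itself. Choose $\epsilon > 0$ so small that the closed $\epsilon$-neighborhood of $f(B^n)$ lies in $U$. Triangulate $B^n$ finely enough (using uniform continuity of $f$) that $f$ varies by less than $\epsilon/2$ on each simplex, and move each vertex $p$ to a point $g(p) \in D$ with $\norm{g(p) - f(p)} < \epsilon/2$, taking $g(p) = f(p)$ for vertices on $\Ss^{n-1}$, where $f$ already lands in $D$. Extending $g$ affinely over each simplex yields a map $g \colon B^n \to D$ — here one uses that $D$ is a vector subspace, so affine combinations of its points stay in $D$ — with $\norm{g - f} < \epsilon$ everywhere and $g = f$ on $\Ss^{n-1}$. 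The straight-line homotopy $(x,t) \mapsto (1-t)f(x) + t g(x)$ then remains within the $\epsilon$-neighborhood of $f(B^n)$, hence in $U$, is constant on $\Ss^{n-1}$, and ends in $D \cap U$.

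For general $L$ I would localize. Cover the compact set $f(B^n) \subs L$ by finitely many submanifold charts, in each of which $L$ appears as the graph of a smooth map $\psi \colon \L_1 \to \R^k$ over a complementary closed subspace of finite codimension $k$, i.e.\ $\L = \L_1 \oplus \R^k$ with projections $\pi_1,\pi_k$. Refining the triangulation via a Lebesgue number, I may assume each simplex maps into one such chart and into $U$. The point of $D \cap L$ that must be produced over a given base value is a $d \in D$ with $\pi_k(d) = \psi(\pi_1(d))$. Two structural consequences of density are available, and this is where finite codimension enters: $D \cap \L_1$ is dense in $\L_1$, and $\pi_k(D) = \R^k$. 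Both follow from the linear fact that a dense subspace meets a closed finite-codimension subspace densely, proved by correcting an approximant with finitely many vectors $w_1,\dots,w_k \in D$ chosen so that $\pi_k(w_j) = e_j$ while $\pi_1(w_j)$ is arbitrarily small. Using the $w_j$ together with tangential approximants drawn from $D \cap \L_1$, the equation $\pi_k(d) = \psi(\pi_1(d))$ becomes, once the tangential part is fixed, a fixed-point equation for the $k$ normal coordinates; since $\psi$ has small derivative on a small chart this map is a contraction, so it has a unique solution depending continuously on the data. This lands a chosen approximant genuinely in $D \cap L$, and the construction can be carried out with parameters over each simplex, kept equal to $f$ on the faces lying in $\Ss^{n-1}$, and interpolated up the skeleta in the usual way; the straight-line-in-chart homotopy from $f$ to this approximant stays in $U$ once the triangulation is fine enough, giving the required compression.

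The main obstacle is exactly this general-$L$ local step, and it is conceptual rather than computational: a plain affine approximation respects $D$ but leaves the curved manifold $L$, whereas a tubular-neighborhood retraction respects $L$ but destroys membership in $D$, so neither alone produces a map into $D \cap L$. What rescues the argument is the finite codimension of $L$: the correction needed to return to $L$ lives in the finite-dimensional normal directions $\R^k$, hence is governed by a finite-dimensional contraction (equivalently, an implicit-function argument) and can be solved continuously in parameters, while the density of $D$ simultaneously supplies the tangential approximants in $\L_1$ and the transverse vectors $w_j$. Separability is used only for the paracompactness and partition-of-unity conveniences underlying the finite chart cover and the skeletonwise gluing.
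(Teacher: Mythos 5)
Your chart-local mechanism is sound, and it is genuinely different from the paper's: the facts that $\pi_k(D)=\R^k$, that $D\cap \L_1$ is dense in $\L_1$ (via correction vectors $w_j\in D$ with $\pi_k(w_j)=e_j$ and small tangential part), and that the equation $\pi_k(d)=\psi(\pi_1(d))$ can then be solved by a contraction, continuously in parameters, are all correct; indeed, with $\pi_k(w_j)=e_j$ exactly, every $d\in D$ in the chart splits uniquely as $d_1+\sum_j c_jw_j$ with $d_1\in D\cap\L_1$, so $D\cap L$ is there the graph of your fixed-point map over $D\cap\L_1$. The genuine gap is the globalization, which you assert rather than prove. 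Both the approximant $g$ and the homotopy $f\simeq g$ are built one simplex at a time, in a chart chosen for that simplex; adjacent simplices use different charts, different splittings $\L=\L_1\oplus\R^k$, different vectors $w_j$ and different tangential approximants, so on a shared face the two constructions disagree. ``Interpolated up the skeleta in the usual way'' is not available off the shelf here: the approximant must take values in $D\cap U$, a set which is not a manifold, not locally convex, and whose local contractibility you cannot assume (that $D\cap U$ is homotopically like $U$ is precisely what is being proved), so extending boundary data produced by \emph{other} charts over a simplex needs an argument. The homotopy is in even worse shape: a straight segment in $\L$ between two points of $L$ leaves $L$, so ``straight-line-in-chart'' must mean the tangential segment lifted to the graph of $\psi$; that stays in $U$ within one chart, but the prisms $\sigma\times[0,1]$ over different simplices again disagree on shared faces and are never glued.

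This gluing is exactly where the paper's key device does its work, which is why omitting it removes the heart of the proof. The paper reduces to a global submersion $h\colon V\to\R^n$ with $L=h^{-1}(0)$, defines global transverse fields $w_j(p)=\big(dh_p|_{N_p^\perp}\big)^{-1}(e_j)$, attaches to each \emph{vertex} $a_i$ of a fine triangulation the points $v_{ij}=f(a_i)+rw_j(a_i)$ together with approximations $\tilde v_{ij}\in D$, and for an arbitrary point $a=\sum_i t_ia_i$ forms the ``neat'' simplex with vertices $z_j(s)=\sum_i t_iv_{ij}(s)$, defining $H(s,a)$ as the unique point of $h^{-1}(0)\cap[z_0(s),\dots,z_n(s)]$. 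Since this depends only on vertex data and barycentric coordinates, it is automatically consistent across faces; since it is an intersection with $h^{-1}(0)$, the entire homotopy stays in $L$; and at the final time the point is a convex combination of elements of the subspace $D$, hence lies in $D\cap U$ --- no implicit function theorem and no tangential density of $D$ are needed. Your approach is likely repairable: the uniqueness of your fixed point gives precisely the compatibility needed to extend, within a fixed chart, boundary data that already lies in $D\cap L$, and a double induction over skeleta and over prisms, with error control at each stage, would finish; but as written the proposal proves the easy local step and asserts the hard parametrized one.
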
 
\begin{proof} 
	We shall prove the lemma
	when $L=h^{-1}(0)$ for some submersion $h\colon V\to \R^n$, where $V$ is an
	open subset of $\L$. This is sufficient for our purposes and the general
	assertion can be deduced from this by using a partition of unity subordinate
	to a suitable cover of $L$.

	Let $T$ be a tubular neighborhood of $U$ in $V$ such that $T\cap L=U$. Let
	$K$ be a compact simplicial complex and $f\colon K\to U$ a continuous map.
	We shall obtain a continuous $H\colon [0,2]\times K\to U$ such that
	$H(0,a)=f(a)$ for every $a\in K$ and $H(\se{2}\times K)\subs D\cap U$. Let
	$e_j$ denote the $j$-th vector in the canonical basis for $\R^n$,
	$e_0=-\sum_{j=1}^ne_j$ and let $\De\subs \R^n$ denote the $n$-simplex
	$[e_0,\dots,e_n]$. Let  $[x_0,x_1,\dots,x_n]\subs T$ be an $n$-simplex and
	$\vphi\colon \De\to [x_0,x_1,\dots,x_n]$ be given by
	\begin{equation*}
		\vphi\bigg(\sum_{j=0}^n s_je_j\bigg)=\sum_{j=0}^n s_j x_j, \text{\ \
			where\ \ $\sum_{j=0}^n s_j=1$\ \ and\ \ $s_j\geq 0$\ \ for all\ \
		$j=0,\dots,n$}.  
	\end{equation*}
	We shall say that $[x_0,x_1,\dots,x_n]$ is \tdfn{neat} if $h\circ
	\vphi\colon \De\to \R^n$ is an embedding and $0\in (h\circ \vphi)(\Int
	\De)$. 

	Given $p\in T$, let $dh_p$ denote the derivative of $h$ at $p$ and
	$N_p=\ker(dh_p)$. Define $w_j\colon T\to \L$ by:
	\begin{equation}\label{E:w_j}
		w_j(p)=\big(dh_p|_{N_p^\perp}\big)^{-1}(e_j)\quad (p\in T,~j=0,\dots,n).
	\end{equation} 
	Notice that 
	$h\big(p+\sum_j\la_j w_j(p)\big)=h(p)+ \sum_j\la_j e_j+o(\abs{\la})$ 
	(for $\la=(\la_0,\dots,\la_n)$ and $p\in T$).
	Hence, using compactness of $K$, we can find $r,\eps>0$ such that:
	\begin{enumerate} 
		\item [(i)] For any $p\in f(K)$, $[p+rw_0(p),\dots,p+rw_n(p)]\subs T$
		and it is neat; 
		\item [(ii)] If $p\in f(K)$ and $\abs{q_j-(p+rw_j(p))}<\eps$ for each
			$j$, then $[q_0,\dots,q_n]\subs T$ and it is neat.  
	\end{enumerate} 
	Let $a_i$ ($i=1,\dots,m$) be the vertices of the triangulation of $K$. Set
	$v_i=f(a_i)$ and 
	\begin{equation*}
		v_{ij}=v_i+rw_j(v_i)\quad
		(i=1,\dots,m,~j=0,\dots,n).  
	\end{equation*} 
	For each such $i,j$, choose $\te{v}_{ij}\in D\cap T$ with
	$\abs{\te{v}_{ij}-v_{ij}}<\frac{\eps}{2}$. Let
	\begin{alignat}{9} 
		&v_{ij}(s)=(2-s)v_{ij}+(s-1)\te{v}_{ij},\ 
		\text{ so that } \notag \\ \label{E:simest} 
		&\abs{v_{ij}(s)-v_{ij}}<\frac{\eps}{2}\ \ 
		(s\in [1,2],~i=1,\dots,m,~j=0,\dots,n).  
	\end{alignat} 
	For any $i,i'\in
	\se{1,\dots,m}$ and $j=0,\dots,n$, we have 
	\begin{equation*}
		\abs{v_{ij}-v_{i'j}}\leq 
		\abs{f(a_{i})-f(a_{i'})}+r\abs{w_j\circ f(a_{i})-w_j\circ f(a_{i'})}.
	\end{equation*} 
	Since $f$ and the $w_j$ are continuous functions, we can suppose that the
	triangulation of $K$ is so fine that $\abs{v_{ij}-v_{i'j}}<\frac{\eps}{2}$
	for each $j=0,\dots,n$ whenever there exists a simplex having $a_{i}$,
	$a_{i'}$ as two of its vertices.  Let $a\in K$ lie in some $d$-simplex of
	this triangulation, say, $a=\sum_{i=1}^{d+1}t_ia_i$ (where each $t_i>0$ and
	$\sum_i t_i=1$). Set
	\begin{equation*}
		z_{j}(s)=\sum_{i=1}^{d+1}t_iv_{ij}(s)\quad (s\in [1,2],~j=0,\dots,n).  
	\end{equation*} 
	Then $[z_0(s),\dots,z_n(s)]$ is a neat simplex because condition (ii) is
	satisfied (with $p=v_1$): 
	\begin{equation*}
		\bigg\vert \sum_{i=1}^{d+1}t_iv_{ij}(s)-v_{1j}\bigg\vert\leq
		\sum_{i=1}^{d+1}t_i \big( \abs{v_{ij}(s)-v_{ij}} + \abs{v_{ij}-v_{1j}}
		\big) < \eps,
	\end{equation*} 
	the strict inequality coming from \eqref{E:simest} and our hypothesis on the
	triangulation.  Define $H(s,a)$ as the unique element of $h^{-1}(0)\cap
	[z_0(s),\dots,z_n(s)]$ ($s\in [1,2]$). Observe that for any $a\in K$,
	$H(s,a)\in U=h^{-1}(0)\cap T$ $(s\in [1,2]$) and $H(2,a)\in D\cap U$, as it
	is  the convex combination of the $\te{v}_{ij}\in D$.

	By reducing $r,\eps>0$ (and refining the triangulation of $K$) if necessary,
	we can ensure that 
	\begin{equation*}
		(1-s)f(a)+sH(1,a)\in T\quad
		\text{for all $s\in [0,1]$ and $a\in K$.} 
	\end{equation*} 
	Let $\pr\colon T\to U$ be the associated retraction. Complete the definition
	of $H$ by setting: 
	\begin{equation*}
		H(s,a)=\pr \big((1-s)f(a)+sH(1,a)\big)\quad 
		\text{($s\in [0,1]$,~$a\in K$).}
	\end{equation*} 
	The existence of $H$ shows that $f$ is homotopic within $U$
	to a map whose image is contained in $D\cap U$. Taking $K=\Ss^k$, we
	conclude that the set inclusion $D\cap U\to U$ induces surjective maps
	$\pi_k(D\cap U)\to \pi_k(U)$ for all $k\in \N$.

	We now establish that the inclusion $D\cap U\to U$ induces injections on all
	homotopy groups. Let $k\in \N$, $G\colon \D^{k+1}\to U$ be continuous and
	suppose that the image of $g=G|_{\Ss^k}$ is contained in $D\cap U$. Let
	$G_0\colon \D^{k+1}\to D\cap U$ be a close approximation to $G$; the
	existence of $G_0$ was proved above. Let $\eps\in (0,1)$ and define
	\begin{equation*}
		G_1\colon \D^{k+1}\to D\cap T\text{\ \ by\ \ }G_1(a)=
		\begin{cases}
			(1-s)g\big(\tfrac{a}{\abs{a}}\big)+sG_0\big(\tfrac{a}{\abs{a}}\big)
			& \text{\ \  if\ \ $\abs{a}=(1-s\eps)$,~$s\in [0,1]$} \\
			G_0\big(\tfrac{a}{1-\eps}\big) & \text{\ \  if\ \ $\abs{a}\leq
		1-\eps$ } 
		\end{cases}
	\end{equation*} 
	Notice that we can make $G_1$ as close as desired to $G$ by a suitable
	choice of $G_0$ and $\eps$. Let $w_j$ be as in \eqref{E:w_j}. We claim that
	there exist continuous functions $\te{w}_j\colon \D^{k+1}\to D$
	$(j=0,\dots,n)$ such that: 
	\begin{enumerate}
		\item [(iii)] $\sum_{j=0}^n \te{w}_j(a)=0$ for all $a\in \D^{k+1}$; 
		\item [(iv)] For any $a\in \D^{k+1}$,
			$[G_1(a)+\te{w}_0(a),\dots,G_1(a)+\te{w}_n(a)]\subs D\cap T$ and it
			is neat.  
	\end{enumerate} 
	To prove this, invoke condition (ii) above (with $\D^{k+1}$ in place of $K$
	and $G$ in place of $f$) together with denseness of $D$ to find constant
	$\te{w}_j$ on open sets which cover $\D^{k+1}$, and use a partition of
	unity. By (iv), for each $a\in \D^{k+1}$ there exist unique
	$t_0(a),\dots,t_n(a)\in [0,1]$ such that $\sum_{i}t_i(a)=1$ and
	\begin{equation*}
		G_2(a)=G_1(a)+t_0(a)\te{w}_0(a)+\dots+t_n(a)\te{w}_n(a)\in h^{-1}(0).
	\end{equation*} 
	We obtain thus a continuous map $G_2\colon \D^{k+1}\to D\cap U$.  Since
	${G_1}|_{\Ss^k}=g$ and $h\circ g=0$, we conclude from (iii) and uniqueness of
	the $t_i$ that $G_2|_{\Ss^k}=g$. Therefore, $G_2$ is a nullhomotopy of $g$
	in $D\cap U$.  
\end{proof}

\begin{crl}\label{L:dense} 
	The subset of all smooth curves in $\sr LS_{\ka_1}^{\ka_2}(u,v)$ is dense in
	the latter.  
\end{crl} 
\begin{proof}
	Take $\L=L^2[0,1]\times L^2[0,1]$, $D=C^\infty[0,1]\times C^\infty[0,1]$ and
	$U$ an open subset of $L=\sr LS_{\kappa_1}^{\kappa_2}(u,v)$. Then it is a
	trivial consequence of (\ref{L:net}) that $D\cap U\neq \emptyset$ if $U\neq
	\emptyset$.
\end{proof}

\begin{crl}[smooth approximation]\label{L:smoothie}
	Let $\sr U\subs \sr LS_{\ka_1}^{\ka_2}(u,v)$ be open, $K$ be a compact
	simplicial complex and $f\colon K\to \sr U$ a continuous map. Then there
	exists a continuous $g\colon K\to \sr U$ such that: 
\begin{enumerate} 
	\item [(i)] $f\iso g$ within $\sr U$.  
	\item [(ii)] $g(a)$ is a smooth curve for all $a\in K$.  
	\item [(iii)] All derivatives of $g(a)$ with respect to $t$ depend
		continuously on $a\in K$.
\end{enumerate} 
	Thus, the map $j\colon \sr CS_{\ka_1}^{\ka_2}(u,v)\to \sr
	LS_{\ka_1}^{\ka_2}(u,v)$ in \eqref{E:injection} induces surjections
	$\pi_k(j^{-1}(\sr U))\to \pi_k(\sr U)$ for all $k\in \N$.  
\end{crl} 
\begin{proof} 
	Parts (i) and (ii) are exactly what was established in the first part of the
	proof of (\ref{L:net}), in the special case where $\L=L^2[0,1]\times
	L^2[0,1]$, $D=C^\infty[0,1]\times C^\infty[0,1]$,  $L=\sr
	LS_{\kappa_1}^{\kappa_2}(u,v)$ and $U=\sr U$. The image of the function
	$g=H_2\colon K\to \sr U$ constructed there is contained in a
	finite-dimensional vector subspace of $D$, viz., the one generated by all
	$\te{v}_{ij}$, so (iii) also holds.  
\end{proof}

\begin{lem}[$ \sr C\home \sr L $]\label{L:C^2} 
	Let $ S $ be complete. Then the inclusion $i\colon \sr
	CS_{\ka_1}^{\ka_2}(u,v)^r\to \sr LS_{\ka_1}^{\ka_2}(u,v)$ is a homotopy
	equivalence for any $r\geq 2$.  Consequently, $\sr
	CS_{\kappa_1}^{\kappa_2}(u,v)^r$ is homeomorphic to $\sr
	LS_{\kappa_1}^{\kappa_2}(u,v)$. 
\end{lem}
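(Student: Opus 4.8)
The plan is to realize $i$ as the restriction of a bounded, injective, dense linear map between separable Banach spaces and then apply \lref{L:Hilbert}\?(c). Since $S$ is complete, the system \eqref{E:de} has a solution defined on all of $[0,1]$ for every choice of data, so the prescription \eqref{E:Sobolev} identifies $\sr LS_{\ka_1}^{\ka_2}(u,\cdot)$ with the entire Hilbert space $E:=\L=L^2[0,1]\times L^2[0,1]$. I claim the same prescription identifies $\sr CS_{\ka_1}^{\ka_2}(u,\cdot)^r$, with its $C^r$-topology, with the separable Banach space $F:=C^{r-1}[0,1]\times C^{r-2}[0,1]$. Indeed, a curve $\ga$ is of class $C^r$ exactly when its speed $\sig=\abs{\dot\ga}$ is of class $C^{r-1}$ and its curvature $\ka_\ga$ is of class $C^{r-2}$ (this is where $r\geq 2$ enters), and since $h$ and $h_{\ka_1,\ka_2}$ are diffeomorphisms, this happens if and only if $\hat\sig=h\circ\sig$ and $\hat\ka=h_{\ka_1,\ka_2}\circ\ka_\ga$ lie in $C^{r-1}$ and $C^{r-2}$ respectively, with no further constraint (the positivity $\sig>0$ and the bounds $\ka_1<\ka_\ga<\ka_2$ are automatic because $h^{-1}$ and $h_{\ka_1,\ka_2}^{-1}$ take values in $(0,+\infty)$ and $(\ka_1,\ka_2)$). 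That this correspondence is a homeomorphism for the stated topologies follows from continuous dependence of the solutions of \eqref{E:de} on $(\sig,\ka)$ (solving the system amounts to two integrations, which recover the two lost derivatives) together with continuity of composition with the smooth diffeomorphisms $h,\,h_{\ka_1,\ka_2}$ and their inverses, exactly as in \lref{L:CtoLinclusion}.

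Under these identifications, the map $i$ of \eqref{E:injection} becomes the linear inclusion $F\inc E$, $(\hat\sig,\hat\ka)\mapsto(\hat\sig,\hat\ka)$. It is injective, bounded (since $\norm{f}_2\leq\norm{f}_\infty$ on $[0,1]$) and has dense image (because $C^\infty[0,1]$ is already dense in $L^2[0,1]$). By \lref{L:submersion} and completeness, $\sr M:=\sr LS_{\ka_1}^{\ka_2}(u,v)$ is a smooth closed submanifold of codimension $3$ in $E$, being the preimage of $v$ under the submersion $\ga\mapsto\ta_\ga(1)$. Its preimage $\sr N:=i^{-1}(\sr M)$ consists precisely of those curves in $\sr CS_{\ka_1}^{\ka_2}(u,\cdot)^r$ whose terminal unit tangent is $v$, that is, $\sr N=\sr CS_{\ka_1}^{\ka_2}(u,v)^r$. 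Thus \lref{L:Hilbert}\?(c) applies directly and shows that $\sr N$ is a smooth closed submanifold of $F$ and that $i\colon\sr N\to\sr M$, i.e.\ $i\colon\sr CS_{\ka_1}^{\ka_2}(u,v)^r\to\sr LS_{\ka_1}^{\ka_2}(u,v)$, is a homotopy equivalence.

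Finally, both spaces are Banach manifolds and a homotopy equivalence is in particular a weak homotopy equivalence, so \lref{L:Hilbert}\?(a) yields that $\sr CS_{\ka_1}^{\ka_2}(u,v)^r$ and $\sr LS_{\ka_1}^{\ka_2}(u,v)$ are homeomorphic, proving the lemma. The point requiring the most care, and the main obstacle, is the verification in the first paragraph that the parametrization $\ga\leftrightarrow(\hat\sig,\hat\ka)$ is a homeomorphism onto $F$ for the $C^r$-topology; once this is in place, everything reduces to the infinite-dimensional machinery already assembled. It is worth emphasizing that the conclusion holds even though the two underlying topologies (the $C^r$-topology and the $L^2$-based Hilbert topology) are genuinely different and $i$ itself is \emph{not} an embedding, by \lref{L:CtoLinclusion}.
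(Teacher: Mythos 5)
Your proposal is correct and is essentially the paper's own proof: both realize the inclusion as the restriction of the bounded, injective, dense linear map $F=C^{r-1}[0,1]\times C^{r-2}[0,1]\to L^2[0,1]\times L^2[0,1]$ to the preimage of the closed finite-codimension submanifold $\sr LS_{\ka_1}^{\ka_2}(u,v)$, identify that preimage with $\sr CS_{\ka_1}^{\ka_2}(u,v)^r$ via the regularity bootstrap through \eqref{E:de}, and then invoke \lref{L:Hilbert}\,(c) for the homotopy equivalence and \lref{L:Hilbert}\,(a) for the homeomorphism. The verification you flag as the main obstacle (that the correspondence $\ga\leftrightarrow(\hat\sig,\hat\ka)$ is a homeomorphism onto the relevant subset of $F$ in the $C^r$-topology) is exactly the step the paper carries out in detail, in both directions, using continuous dependence on parameters.
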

\begin{proof}
	Let $\L=L^2[0,1]\times L^2[0,1]$, let $F=C^{r-1}[0,1]\times
	C^{r-2}[0,1]$ (where $C^k[0,1]$ denotes the set of all $C^k$ functions
	$[0,1]\to \R$, with the $C^k$ norm) and let $i\colon F\to \L$ be set
	inclusion. Setting $\sr M=\sr LS_{\ka_1}^{\ka_2}(u,v)$, we conclude from
	(\ref{L:Hilbert}\?(c)) that $i\colon \sr N=i^{-1}(\sr M)\inc \sr M$ is a
	homotopy equivalence. We claim that $\sr N$ is homeomorphic to $\sr
	CS_{\ka_1}^{\ka_2}(u,v)^r$, where the homeomorphism is obtained by
	associating a pair $(\hat \sig,\hat \ka)\in \sr N$ to the curve $\ga$
	obtained by solving \eqref{E:de}, with $\sig$ and $\ka$ as in
	\eqref{E:Sobolev}.

	Suppose first that $\ga\in \sr CS_{\ka_1}^{\ka_2}(u,v)^r$. Then
	$\abs{\dot\ga}$ (resp.~$\ka$) is a function $[0,1]\to \R$ of class $C^{r-1}$
	(resp.~$C^{r-2}$). Hence, so are $\hat{\sig}=h\circ \abs{\dot\ga}$ and $\hat
	\ka=h_{\ka_1}^{\ka_2}\circ \ka$, since $h$ and $h_{\ka_1}^{\ka_2}$ are
	smooth. Moreover, if $\ga,\,\eta\in \sr CS_{\ka_1}^{\ka_2}(u,v)^r$ are close
	in $C^r$ topology, then $\hat \ka_\ga$ is $C^{r-2}$-close to $\hat\ka_\eta$
	and $\hat \sig_\ga$ is $C^{r-1}$-close to $\hat \sig_\eta$.

	Conversely, if $(\hat{\sig},\hat{\ka})\in \sr N$, then $\sig=h^{-1}\circ
	\hat \sig$ is of class $C^{r-1}$ and $\ka=(h_{\ka_1}^{\ka_2})^{-1}\circ \hat
	\ka$ of class $C^{r-2}$. Since all functions on the right side of
	\eqref{E:de} are of class (at least) $C^{r-2}$, the solution $\ta=\ta_\ga$
	to this initial value problem is of class $C^{r-1}$. Moreover,
	$\dot\ga=\sig\ta$, hence the velocity vector of $\ga$ is seen to be of class
	$C^{r-1}$. We conclude that $\ga$ is a curve of class $C^r$. Further,
	continuous dependence on the parameters of a differential equation shows
	that the correspondence $(\hat \sig,\hat \ka)\mapsto \ta_\ga$ is continuous.
	Since $\ga$ is obtained by integrating $\sig\ta_\ga$, we deduce that  the
	map $(\hat\sig,\hat\ka)\mapsto \ga$ is likewise continuous.

	The last assertion of the lemma follows from (\ref{L:Hilbert}\?(c)).
\end{proof}


\section*{Acknowledgements} 

The first author is partially supported by grants from \ltsc{capes}, \ltsc{cnpq}
and \ltsc{faperj}.  
The second author gratefully acknowledges C.~Gorodski, \ltsc{ime-usp} and
\ltsc{unb} for hosting him as a post-doctoral fellow, and \ltsc{fapesp}
(grant 14/22556-3) and \ltsc{capes} for the financial support.
The authors thank the anonymous referee for her/his helpful comments. 

\vfill\eject

\bibliography{references.bib}
\bibliographystyle{amsplain}

\vspace{12pt}
\noindent
{\href{mailto:nicolau@mat.puc-rio.br}{\ttt{nicolau@mat.puc-rio.br}} \\
\small \tsc{Departamento de Matem\'atica, Pontif\'icia
Universidade Cat\'olica do Rio de Janeiro (\ltsc{puc-rio})\\ 
Rua Marqu\^es de S\~ao Vicente 225, G\'avea -- 
Rio de Janeiro, RJ 22453-900, Brazil}}

\vspace{12pt} 

\noindent 
{\href{mailto:pedroz@ime.usp.br}{\ttt{pedroz@ime.usp.br}} \\
\noindent{\small \tsc{Instituto de Matem\'atica e Estat\'istica,
	Universidade de S\~ao Paulo (\ltsc{ime-usp}) \\ Rua~do Mat\~ao 1010, Cidade
	Universit\'aria -- S\~ao Paulo, SP
05508-090, Brazil}} 

\vspace{4pt} 

\noindent \small \tsc{Departamento de Matem\'atica, Universidade de Bras\'ilia
(\ltsc{unb}) \\ Campus Darcy Ribeiro, 70910-900 -- Bras\'ilia, DF, Brazil}

\end{document}